\newtheorem{theorem}{Theorem}[section]  %This keeps track of the section in the naming of the theorems
\newtheorem{lemma}[theorem]{Lemma}  %This creates a lemma environment and numbers it along with the theorems
\newtheorem{proposition}[theorem]{Proposition}
\newtheorem{corollary}[theorem]{Corollary}
\newtheorem{remark}[theorem]{Remark}
\newtheorem{example}[theorem]{Example}
\newtheoremstyle{indented}{3pt}{3pt}{}{}{\bfseries}{.}{.5em}{}
\theoremstyle{indented}
\newenvironment{customthmQuant}[1]{\innercustomthm}{\endinnercustomthm}
\newcounter{theexercise} \setcounter{theexercise}{1}
\newcommand{\End}{\mathrm{End}}
\newcommand{\sumdd}[2]{\displaystyle \sum_{#1}^{#2}}
\newcommand{\sumd}[1]{\displaystyle \sum_{#1}}
\newcommand{\limd}[1]{\displaystyle \lim_{#1}}
\newcommand{\intd}[1]{\displaystyle \int_{#1}}
\newcommand{\intdd}[2]{\displaystyle \int_{#1}^{#2}}
\newcommand{\fracd}[2]{\displaystyle \frac{#1}{#2}}
\newcommand{\bb}[1]{\mathbb{#1}}
\newcommand{\defeq}{\mathrel{\mathpalette{\vcenter{\hbox{$:$}}}=}}
\newcommand{\SU}{\mathrm{SU}}
\newcommand{\SO}{\mathrm{SO}}
\newcommand{\U}{\mathrm{U}}
\newcommand{\PU}{\mathrm{PU}}
\newcommand{\A}{{\mathcal{A}}}
\newcommand{\G}{{\mathcal{G}}}
\newcommand{\C}{{\mathcal{C}}}
\newcommand{\CS}{{\mathcal{CS}}}
\newcommand{\YM}{{\mathcal{YM}}}
\newcommand{\afV}{V}
\newcommand{\afA}{A}
\newcommand{\afa}{a}
\title{The Yang-Mills flow for \\ cylindrical end 4-manifolds}
\author{David L. Duncan}
\begin{document}

\maketitle
\date{}

\begin{abstract}
We establish various existence and uniqueness results for the Yang-Mills flow on cylindrical end 4-manifolds. We also show long-time existence and infinite-time convergence under certain hypotheses on the underlying data.
\end{abstract}

\tableofcontents

	\section{Introduction} \label{Introduction}
	
	The Yang-Mills flow is the flow of a natural vector field on the space of connections on a Riemannian $n$-manifold $Z$. The critical dimension for the flow is $n = 4$, and this is the dimension we consider here. In the closed case (compact with no boundary), this 4-dimensional flow has been studied extensively by many authors \cite{Sed,BL,DonASD,Struwe,KMN,Sch,Schglobal,SSTz,DW,Waldron,Feehan1}. Here we study the case where $Z$ has \emph{cylindrical ends}. Our main results establish short- and long-time existence and uniqueness results under certain hypotheses.

	 Throughout this paper, $Z$ will be a cylindrical end 4-manifold that is oriented and connected. In particular, this means we can write
			$$Z = Z_0 \cup_{Y} \left( \left[0, \infty \right) \times Y \right),$$
			where $Y$ is a closed 3-manifold, and $Z_0$ is a compact manifold with $\partial Z_0 = Y$. We allow the case where $Y$ has multiple connected components. To simplify the exposition, we assume that $Y$ is non-empty, though the results have extensions to the case where $Z$ is closed. We will use the term \emph{(cylindrical) ends} to refer to $\left[0, \infty \right) \times Y$; though at times we will abuse terminology and refer to $Y$ as the `ends' as well. We assume $Z$ is equipped with a \emph{cylindrical end metric} $g$. This means that $g$ restricts on the ends to have the form
			$$g \vert_{\left[0, \infty \right) \times Y} = ds^2 + g^Y,$$
			where $s$ is the coordinate-variable on $\left[0, \infty \right)$, and $g^Y$ is a fixed metric on $Y$.

			Let $G$ be a compact, connected Lie group, and fix a principal $G$-bundle $P \rightarrow Z$. We assume that $P$ restricts on the cylindrical ends to be a product 
			$$P \vert_{\left[0, \infty \right)  \times Y} = \left[0, \infty \right)  \times Q,$$
			for some bundle $Q \rightarrow Y$. Every bundle on $Z$ is equivalent to a bundle of this form.

			To obtain a good analytic problem, we want to consider only those connections on $P$ that have fixed asymptotics down the cylindrical ends of $Z$. For this purpose, fix a flat connection $a$ on $Q$. We assume this is \emph{acyclic}, meaning that $a$ is irreducible and is non-degenerate as a critical point of the Chern-Simons functional; see Section \ref{PerturbationsOnY}. Let
			$$\A(P; a)$$
			denote the set of smooth connections on $P$ that, together with their derivatives, decay rapidly down the cylindrical end to the fixed connection $a$. We will write $\A^{k,p}(P; a)$ for the completion of $\A(P; a)$ relative to the $W^{k,p}$-Sobolev norm, where the derivatives are defined relative to any element of $\A(P;a)$. %The analytic relevance of restricting to connections $A$ in $\A^{k,p}(P; a)$ is that the elliptic operator $\smash{d_A \oplus d_A^*}$ determines a Fredholm section of a relevant bundle over $\smash{\A^{k,p}(P; a)}$. (Our notation for covariant derivatives, as well as other gauge-theoretic objects, follows that of Donaldson-Kronheimer \cite{DK}; see the beginning of Section \ref{BasicDefinitions} for a review.)
			
			\medskip
			
			Given an initial connection $A_0$ in (a Sobolev completion of) $\A(P; a)$, the \emph{Yang-Mills flow} is given by
			\begin{equation}\label{YMNoPert}
			\partial_\tau A = - d_A^* F_A, \indent A(0) = A_0,
			\end{equation}
			where $A = A(\tau)$ is a path of connections in (a Sobolev completion of) $\A(P;a)$. Our main results pertain to this flow, and are summarized here. See Sections \ref{Short-TimeExistence} and \ref{Long-TimeExistence} for precise statements.

			\medskip
			
			$\bullet$  \emph{Short-time existence and uniqueness:} Assume $a$ is acyclic and 
			$$A_0 \in \A^{1,2}(P; a) \cap \A^{2,p}(P; a)$$ 
			for some $p > 4$. We show that there is some $\tau_0 > 0$ so that a strong solution to (\ref{YMNoPert}) exists, and is unique on the time interval $\left[0, \tau_0 \right)$. As is familiar in the closed case, we obtain a characterization of the maximal existence time in terms of concentration of the curvature. The only difference in the cylindrical end setting is that these concentration points may escape to infinity down the end.  
			
			In our approach to short-time existence, we primarily refer to Feehan's recent monograph \cite{Feehan1} that summarizes and expands upon the original short-time existence proofs by Struwe \cite{Struwe} and Kozono-Maeda-Naito \cite{KMN}. The main point we emphasize below is that, due to the acyclic assumption on $a$, there is no essential analytic difference in passing from the closed case to the cylindrical end case considered here. This effectively comes down to the observation that the Sobolev embedding $W^{1,2}(Z) \hookrightarrow L^4(Z)$ holds even for non-compact 4-manifolds.

			\medskip
			
			 $\bullet $ \emph{Long-time existence:} We give a proof of long-time existence under the following additional assumptions: 
			
			\begin{itemize}
			\item[(i)] There is a natural index associated to the flat connection $a$, and we assume this index is not too high. 
			\item[(ii)] We assume all ASD connections are \emph{ASD-regular}. This term means that the moduli space of ASD connections is cut out transversely, and hence is smooth and of the expected dimension; see Section \ref{PerturbedYang-MillsTheory}.
			\end{itemize}
			Under these assumptions, we prove that the Yang-Mills functional has a positive minimal energy gap $\eta > 0$; see Section \ref{APositiveEnergyGap}. Then we prove that the flow (\ref{YMNoPert}) exists for all time provided the $L^2$-norm of $F_{A_0}^+$ is less than $\eta$. Our proof of this positive energy gap is a bubble-excluding analysis, relying on an understanding of singularities that can form along the flow. Moreover, the above assumptions also exclude bubbling at infinite time.

			\medskip
			
			 $\bullet$ \emph{Infinite-time convergence:} Let $A(\tau)$ be a solution of the flow that exists for all time. Then, under the above assumptions, we show that for $2 \leq q \leq 4$ the $W^{1,q}(Z)$-limit
			$$A_\infty \defeq \lim_{\tau \rightarrow \infty} A(\tau)$$ 
			exists and is an ASD connection in $\A^{1,q}(P; a)$. 
			
			Our proof of convergence at infinite-time relies on several ingredients. First, since we have excluded bubbling, it follows immediately from Uhlenbeck's weak compactness theorem that we have weak subsequential convergence at infinite time to a Yang-Mills connection $A_\infty$, where the convergence is modulo gauge and on compact sets. A priori, this limiting connection may depend on the subsequence chosen, and it may be the case that the asymptotic limits of $A_\infty$ are not $a$ (i.e., $A_\infty$ may belong to $\A^{1,2}(P; a')$ for some other flat connection $a'$). This latter phenomenon is due to the possibility of energy escaping down the cylindrical ends. To exclude these possibilities, we use the ASD-regularity and small energy assumptions again to show that the path $A(\tau)$ is Cauchy in $W^{1,q}$ on the full 4-manifold $Z$. In particular, this implies $A_\infty$ does in fact belong to $\A^{1,q}(P; a)$, as desired. Moreover, the positive energy gap forces $A_\infty$ to be ASD, as opposed to just Yang-Mills.

			\begin{remark}\label{GenREm}
			(a) S\`{a} Earp \cite{SE} considers a similar flow on cylindrical-end K\"{a}hler manifolds. He makes analogous acyclic assumptions on the asymptotic value $a$. 
			
			\medskip
			
			(b) The natural Sobolev constants for the flow are $k = 1$ and $p = 2$, since these are the weakest constants relative to which the Yang-Mills functional is well-defined. However, since the set of $W^{2,2}$-gauge transformations does not form a well-defined group (see Remark \ref{gaugeactionforlargep}), it is difficult to establish any more than a weak solution to the flow when $A_0$ has regularity $W^{1,2}$; see (c), below. Due to this and related reasons, we will typically work with $k, p$ that are sufficiently far above this borderline level.

			\medskip

			The following remarks refer to authors working over \emph{closed} 4-manifolds. 
			
		\medskip
		
		(c) Struwe \cite{Struwe} proved that if $A_0$ is in $W^{1,2}$, then short-time existence and uniqueness holds for (\ref{YMNoPert}) in a weak sense. His results carry over to our setting as well.  See also Remark \ref{StruRemark}.

			\medskip
			
			(d) Schlatter \cite{Schglobal} proved long-time existence under the assumption that $\smash{F^+_{A_0}}$ is $L^2$-small, and the bundle $P$ has small Pontryagin number. Our approach is in many ways similar, with the restriction on the Pontryagin number being replaced by the index assumption on $a$. 
			
			\medskip
			
			(e) Waldron \cite{Waldron} has recently ruled out finite-time bubbling under the assumption that either $F^+$ or $F^-$ does not concentrate in $L^2$. Waldron's arguments are effectively local in nature, and so extend to our case without much trouble. In particular, the long-time existence stated above holds even without the two index and regularity assumptions that we have made. That being said, by including these assumptions, we can exclude bubbling at infinite-time, as well as finite-time. Moreover, with these assumptions, we are able to phrase sufficient conditions for long- and infinite-time existence purely in terms of an energy condition for the initial connection. Indeed, this is the motivation for our approach, since it can be used to study the behavior of the flow under various adiabatic limits of the underlying metric; this is described in more detail \cite{DunAFI}.

			\medskip
			
			(f) Feehan \cite{Feehan1} has obtained similar infinite-time convergence results, where he uses the \L ojasiewicz-Simon's inequality in place of our (rather strong) index and ASD-regularity assumptions. 
			\end{remark}

			In practice, the acyclic and ASD-regularity assumptions mentioned above are frequently \emph{not} satisfied. For example, if $G = SO(3)$ and $Y = S^1 \times \Sigma$, with $\Sigma$ a surface of genus larger than 1, then all flat connections fail to be acyclic. However, the acyclic and ASD-regularity assumptions are \emph{generic} in a certain sense, and so can often be obtained by perturbing the defining equations. Moreover, this perturbation scheme fits in nicely with various standard applications of gauge theory to low-dimensional topology; see \cite{Donfloer}. Consequently, we consider a suitably \emph{perturbed} version of the flow (\ref{YMNoPert}). We discuss the relevant perturbations in Section \ref{Perturbations}. The discussion culminates with Theorem \ref{existencetheorem} which states that, for a large class of cylindrical end 4-manifolds, the acyclic and ASD-regularity assumptions can always be achieved using some such perturbation. The reader who is not interested in this aspect is welcome to skip to Section \ref{ThePerturbedYang-MillsHeatFlow}, and ignore all perturbation terms (denoted by $K$ and ${\bf K}$). Of course, the trade-off is that the discussion may be vacuous if the regularity assumptions are not satisfied.

			\begin{remark}
			See Janner \cite{Janner2} for a similar perturbed Yang-Mills flow over 3-manifolds.
			\end{remark}

	\noindent {\bfseries Acknowledgments:} The author is grateful to his thesis advisor Chris Woodward for his insight and valuable suggestions. He would also like to thank Tom Parker, as well as Paul Feehan and Alex Waldron for their helpful comments and suggestions with an earlier draft. This paper was completed at McMaster University in 2016, and summarizes work completed from 2013 to 2014 while the author was at Michigan State University.

	\section{Gauge theory with perturbations} \label{Perturbations} \label{BasicDefinitions}

In Section \ref{DefinitionsOfThePerturbations} we define a certain class of perturbations that we will use to perturb the flow. After defining this class, we introduce these perturbations into several standard gauge theoretic constructions (e.g., Yang-Mills theory and Uhlenbeck compactness). This is carried out in Sections \ref{PerturbedCSandYMTheory} and \ref{UhlenbeckCompactness}. Section \ref{TheClassOfPerturbations} provides an existence result providing conditions under which the desirable perturbations exist.

	 Before getting into the details of perturbations, we begin by describing our set-up and notation in the absence of a perturbation. Fix a compact, connected Lie group $G$. Since $G$ is compact, its Lie algebra $\frak{g}$ admits an $\mathrm{Ad}$-invariant inner product $\langle \cdot, \cdot \rangle$. In order to appeal to standard index calculations, we choose this inner product as follows. Fix a faithful unitary embedding $G \rightarrow \U(N)$, and use this to pull back the inner product 
\begin{equation}\label{innerproduct}
\langle \xi , \zeta \rangle = \frac{1}{2 \pi^2} \mathrm{tr}(\xi \cdot \zeta^*) = - \frac{1}{2 \pi^2} \mathrm{tr}(\xi \cdot \zeta)
\end{equation}
on $\frak{u}(N) \subset \End(\bb{C}^N)$. The coefficient $(2\pi^2)^{-1}$ is to ensure we obtain integers for certain characteristic numbers appearing below (see Example \ref{ex1} (b) and Lemma \ref{lemma1}).

			Let $R \rightarrow X$ be a principal $G$-bundle over an oriented Riemannian manifold $X$. We will write $R(\frak{g}) \rightarrow X$ for the adjoint bundle associated to $R$, and 
			$$\Omega^k(X, R(\frak{g}))$$ 
			for the space of $k$-forms on $X$ with values in $R(\frak{g})$. We will use similar notation for forms with values in other bundles. The $\mathrm{Ad}$-invariance of the inner product imply that it combines with the wedge to produce a graded-commutative map of the form
			$$\begin{array}{rcl}
			\Omega^k(X, R(\frak{g})) \otimes \Omega^\ell(X, R(\frak{g}))  & \longrightarrow  & \Omega^{k+\ell}(Z, \bb{R}) \\
			 V \otimes W   & \longmapsto & \langle V \wedge W \rangle.
			\end{array}$$
			Similarly, the Lie bracket defines a graded Lie bracket structure on $\Omega^k(X, R(\frak{g}))$, which we denote by $\left[ V \wedge W \right]$.

			We will use 
			$$\A(R), \indent \mathrm{and} \indent \G(R)$$ 
			for the spaces of smooth connections and gauge transformations, respectively, on $R$. Our convention is that $\G(R)$ acts on $\A(R)$ by pullback (this is a right action). The space $\A(R)$ is naturally an affine space modeled on $\Omega^1(X, R(\frak{g}))$, and we use additive notation to indicate the associated action. 
			
			Associated to each connection $A \in \A(R)$ is a covariant derivative 
			$$d_{A} : \Omega^k(X, R(\frak{g})) \longrightarrow \Omega^{k+1}(X, R(\frak{g})).$$ 
			This satisfies
			$$d_{u^*A} W = \mathrm{Ad}(u^{-1}) d_A\left(  \mathrm{Ad}(u) W \right)	, \indent d_{A + V}  = d_A + \left[ V \wedge \cdot \right]$$
			for all $k$-forms $W \in \Omega^k(X, R(\frak{g}))$, 1-forms $V \in \Omega^1(X, R(\frak{g}))$, and all gauge transformations $u \in \G(R)$. We say that a connection $A$ is \emph{irreducible} if the covariant derivative $d_A$ is injective on 0-forms. 
			
			The curvature of a connection is a 2-form $F_{A} \in \Omega^2(X, R(\frak{g}))$. This satisfies
			$$d_A \circ d_A W = \left[ F_A \wedge W \right], \indent F_{u^*A} = \mathrm{Ad}(u^{-1}) F_A, $$
			$$F_{A+V} = F_A + d_A V + \frac{1}{2} \left[ V \wedge V \right]$$
			for all $W \in \Omega^k(X, R(\frak{g}))$, $V \in \Omega^1(X, R(\frak{g}))$, and $u \in \G(R)$.

	\subsection{Definition of the perturbations}\label{DefinitionsOfThePerturbations}
	
		Here we define the relevant class of perturbations. We begin by discussing the asymptotic behavior down the cylindrical end $Y$, then we discuss the perturbation on the rest of $Z$. We refer the reader to \cite{DunAFI} for more details of the various assertions claimed in this section. 	
	
	\subsubsection{Perturbations on $Y$}\label{PerturbationsOnY}
	
Let $Q \rightarrow Y$ be as in the introduction. Fix a map of the form
\begin{equation}\label{pertdef0}
K: \A(Q) \longrightarrow \Omega^2(Y, Q(\frak{g})), \indent a \longmapsto {K}_a.
\end{equation}
We will always assume this is gauge equivariant in the sense that
$$K_{u^* a} = \mathrm{Ad}(u^{-1}) K_a$$
for all $a \in \A(Q)$ and all gauge transformations $u \in \G(Q)$. We will refer to $K$ as a \emph{perturbation on $Y$}. We will use this to perturb the curvature, by setting
$$F_{a, K} \defeq F_a  - K_a.$$
We will say a connection $a \in \A(Q)$ is \emph{$K$-flat} if $F_{a, K} = 0$. 

Denote the linearization of $K$ at $a$ by 
$$dK_a: \Omega^1(Y, Q(\frak{g})) \longrightarrow \Omega^2(Y, Q(\frak{g})).$$ 
We will typically assume $K$ is chosen to satisfy the following.

\smallskip

\begin{customthmQuant}{Axiom 0}\label{axioms0} \emph{
The perturbation $K$ is chosen so that $dK_a$ is symmetric in the sense that}
$$\intd{Y} \langle dK_a(v) \wedge w \rangle  = \intd{Y} \langle v \wedge dK_a(w) \rangle$$
\emph{for all $v, w \in \Omega^1(Y, Q(\frak{g}))$.}
 \end{customthmQuant}
 The next example shows that this axiom is not difficult to arrange. 

\begin{example}\label{exampleexamples}
(a) Fix a function $H: \A(Q) \rightarrow \bb{R}$, and let $(dH)_a \in T^*_a \A(Q)$ be the derivative at $a$. Then define $K_a \in \Omega^2(Y, Q(\frak{g}))$ by
$$(dH)_a v = \intd{Y} \langle K_a \wedge v\rangle$$
for all $v \in \Omega^1(Y, Q\frak{g}))$. (We are using the integral to identify $\Omega^2(Y, Q(\frak{g}))$ with the dual of $T_a \A(Q) = \Omega^1(Y, Q(\frak{g}))$.) Then this satisfies \ref{axioms0}.

\medskip

(b) Here is a variant of the above that will be useful later. Suppose $\Sigma \subset Y$ is an embedded surface that is closed and oriented. Fix a function $h: \A(Q\vert_\Sigma) \rightarrow \bb{R}$, and for $\alpha \in \A(Q\vert_\Sigma)$, define a 1-form $X_\alpha$ by
$$dh_\alpha (\nu)= \intd{\Sigma} \langle X_\alpha \wedge \nu \rangle$$
for all $\nu \in \Omega^1(\Sigma, P(\frak{g}))$. Next, thicken $\Sigma$ up to a neighborhood $U \times \Sigma \subset Y$, for some interval $U$. Fix a function $f: U \rightarrow \bb{R}$ that is supported in the interior of $U$. Then declare
$$Y_a \defeq df \wedge X_{a \vert},$$
where $a \vert$ denotes the restriction of $a$ to $\left\{t \right\} \times \Sigma \subset U \times \Sigma$. This also satisfies \ref{axioms0}.
\end{example}

We will say a $K$-flat connection $a \in \A(Q)$ is \emph{acyclic} if the matrix
\begin{equation}\label{extendedhess}
			 \left(\begin{array}{cc}
			*d_{\afa} - *d{K}_{\afa} & -d_{\afa}\\
			-d_{\afa}^* & 0 
			\end{array}\right)
			\end{equation}	
			is injective as an operator on $\Omega^1(Y, Q(\frak{g})) \oplus \Omega^0(Y, Q(\frak{g}))$; the Hodge star appearing here is the one on $Y$. The primary relevance of \ref{axioms0} is that it implies the matrix (\ref{extendedhess}) is self-adjoint relative to the $L^2$-inner product
			$$(v, w ) \defeq \intd{Y} \langle v \wedge * w \rangle.$$

	\subsubsection{Perturbations on $Z$}
	
Moving to the 4-manifold $Z$, we are interested in gauge equivariant maps of the form
\begin{equation}\label{pertdef}
{\bf K}: \A(P) \longrightarrow \Omega^2(Z, P(\frak{g})), \indent A \longmapsto {\bf K}_A.
\end{equation} 
We will assume that, for each $\ell \geq 1, p \in \left[1, \infty \right]$, the map ${\bf K}$ is smooth relative to the $W^{\ell,p}$-topology on the domain an codomain. We will also assume any such ${\bf K}$ is translationally-invariant on the cylindrical end, in the following sense: Fix $A \in \A(P)$, and write 
$$A \vert_{\left[0, \infty \right) \times Y} = a + p \: ds$$ 
so $a: \left[0, \infty\right) \rightarrow \A(Q)$ is a path of connections and $p : \left[0, \infty \right) \rightarrow \Omega^0(Y, Q(\frak{g}))$ is a path of 0-forms. Then we assume there is some $K$ as in (\ref{pertdef0}) so that
$${\bf K}_A \vert_{\left[0, \infty \right) \times Y} = K_{a}.$$
Any map ${\bf K}$ satisfying the above will be called a \emph{perturbation}, and we will refer to $K$ as the \emph{induced perturbation on $Y$}. We will say that ${\bf K}$ satisfies \ref{axioms0} if the induced perturbation on $Y$ satisfies \ref{axioms0}.

\begin{remark}\label{defofKY}
A particularly special case is when $Z$ is the cylinder $\bb{R} \times Y$. Any perturbation on $Y$ uniquely determines a translationally-invariant perturbation on $\bb{R} \times Y$. We will use ${\bf K}^Y$ to denote perturbations on $Z$ obtained in this way.
\end{remark}

We will want to assume our perturbations satisfy certain uniform bounds. To state the relevant bounds, set
$$\Omega^k \defeq \Omega^k(Z, P(\frak{g})).$$
Then we will use $d^\ell {\bf K}_A : \otimes^\ell \Omega^1 \rightarrow \Omega^2$ to denote the $\ell$th derivative of ${\bf K}$ at $A$. 

\smallskip

\begin{customthmQuant}{Axiom 1}\label{axioms1} \emph{(Analytic axiom)
For any integers $\ell, k \geq 0$, and $p \in \left[1, \infty \right]$, there is a constant $C_{\bf K}(k, \ell, p)$ so that}
\begin{equation}\label{lemma2}
\begin{array}{lcl}
\Vert d^\ell {{\bf K}}_\afA (V_1, V_2, \ldots, V_\ell) \Vert_{W^{k,p}} \\
\indent \indent \leq C_{\bf K}(k, \ell, p) \left(1 + \Vert F_{\afA, {\bf K}} \Vert_{W^{k-1,p}}^{k}  \right) \Vert \afV_1 \Vert_{W^{k,p}} \Vert \afV_2 \Vert_{W^{k,p}} \ldots \Vert \afV_\ell \Vert_{W^{k,p}}
\end{array}
\end{equation}
\emph{for all connections $\afA$, and compactly supported 1-forms $\afV_1, \ldots, \afV_\ell \in \Omega^1(Z, P(\frak{g}))$. All norms are on $Z$.}
 \end{customthmQuant}

For example, when $\ell = k = 0$, this gives a uniform bound of the form
$$\Vert {\bf K}_A \Vert_{L^p} \leq C_{\bf K}(0, 0, p)$$
for all connections $A$. In Section \ref{TheClassOfPerturbations} we will discuss a class of perturbations that satisfy this axiom.

\medskip

As in the 3-dimensional case, we set
$$F_{A, {\bf K}} \defeq F_A - {\bf K}_A.$$
The linearization of the map $A \mapsto F_{A, {\bf K}}$ is the operator
$$d_{A, {\bf K}}  \defeq d_A - d {\bf K}_A: \Omega^1 \longrightarrow  \Omega^2.$$

Just as the covariant derivative $d_A$ is defined on forms of all degrees, we want to extend $d_{A, {\bf K}}$ to an operator on all forms on $Z$. To do this, we define maps

\begin{equation}\label{exactseq}
\Omega^0 \stackrel{d{\bf K}_A}{\longrightarrow} \Omega^1 \stackrel{d{\bf K}_A}{\longrightarrow} \Omega^2 \stackrel{d{\bf K}_A}{\longrightarrow} \Omega^3 \stackrel{d{\bf K}_A}{\longrightarrow}  \Omega^4,
\end{equation}
as follows:

\begin{itemize}
\item Declare $d{\bf K}_A: \Omega^0 \rightarrow \Omega^1$ to be the zero map.
\item The map $d{\bf K}_A: \Omega^1 \rightarrow \Omega^2$ is the linearization of $A \mapsto K_A$, as above.
\item Declare $d{\bf K}_A: \Omega^2 \rightarrow \Omega^3$ to be the Banach space dual to $d{\bf K}_A: \Omega^1 \rightarrow \Omega^2$. Using the identification $(\Omega^i)^* = \Omega^{4-i}$ coming from integration, this can be equivalently defined by the property 

$$\intd{Z} \langle d{\bf K}_A(W) \wedge V \rangle = - \intd{Z} \langle W \wedge d{\bf K}_A(V) \rangle$$
for $W \in \Omega^2, V \in \Omega^1$ (the minus sign is the account for the grading). 
\item Define $d{\bf K}_A: \Omega^3 \rightarrow \Omega^4$ to be the zero map. Note that this is the Banach space dual to $d{\bf K}_A: \Omega^0 \rightarrow \Omega^1$.
\end{itemize}
Of course, the only interesting part of this is the extension to 2-forms. Note that this is consistent with \ref{axioms0} and the requirement that ${\bf K}$ agrees with $K$ on the ends. 

It follows that $d_{A, {\bf K}}: \oplus_k \Omega^k \rightarrow \oplus_k \Omega^k$ is its own Banach space adjoint, up to the usual sign coming from the grading. Similarly, we can form the $L^2$-Hilbert space adjoint by setting
$$d_{\afA, {\bf K}}^* \defeq -(-1)^{(4-k)(k-1)} * d_{\afA, {\bf K}} * : \Omega^k \longrightarrow \Omega^{k-1}.$$ 
This satisfies

$$\left( d_{\afA, {\bf K}} V, W\right) = \left(V, d_{\afA, {\bf K}}^* W \right)$$
for all compactly supported $V \in \Omega^{k-1}, W \in \Omega^{k}$, where now $\left( \cdot, \cdot \right)$ is the $L^2$-inner product on $Z$ coming from the metric.

It will be convenient if $d_{A, {\bf K}}$ and $F_{A, {\bf K}}$ satisfy the Bianchi identity $d_{A, {\bf K}} F_{A, {\bf K}} = 0$, and similar algebraic identities. For this and similar purposes, we impose the following axiom on ${\bf K}$. 

\smallskip

\begin{customthmQuant}{Axiom 2}\label{axioms2} \emph{(Algebraic axiom)
The following holds for each $\afA \in \A(P)$:}

\begin{itemize}
\item[(i)] $d{\bf K}_\afA \circ d{\bf K}_\afA = 0$
\item[(ii)] $d{\bf K}_\afA({\bf K}_\afA) =  0$
\item[(iii)] $\langle {\bf K}_\afA \wedge {\bf K}_\afA \rangle = 0$
\item[(iv)] $d_\afA ({\bf K}_\afA) = - d{\bf K}_\afA(F_\afA).$
\end{itemize}
\end{customthmQuant}

To construct an example of a perturbation satisfying this, repeat the construction of Example \ref{exampleexamples} (b), but interpret $U \times \Sigma$ as a neighborhood in $Z$ (so $U$ is a surface, as opposed to an interval). 

Note that the gauge equivariance of ${\bf K}$ automatically gives
$$d{\bf K}_\afA \left(d_\afA \phi\right) = \left[ {\bf K}_\afA, \phi \right]$$
for all $\phi \in \Omega^0(Z , P(\frak{g}))$. Combining this with \ref{axioms2}, it follows that $d_{A, {\bf K}}$ behaves algebraically like a usual covariant derivative.

\begin{corollary}\label{goodcor}
Assume \ref{axioms2}. Then the following holds for each $\afA \in \A(P)$ and $\phi \in \Omega^0(Z , P(\frak{g}))$:

$$\begin{array}{ll}
\textrm{(Curvature Identity)} & d_{\afA, {\bf K}} \circ d_{\afA, {\bf K}} \phi = \left[ F_\afA, \phi \right]\\
\textrm{(First Bianchi Identity)} & d_{\afA, {\bf K}} F_{\afA, {\bf K}} = 0\\
\textrm{(Second Bianchi Identity)} & d_{\afA, {\bf K}}^* d_{\afA, {\bf K}}^* F_{\afA, {\bf K}} = 0.
\end{array}$$
\end{corollary}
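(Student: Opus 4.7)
The plan is to verify each of the three identities by a direct computation from the axioms. For the Curvature Identity, I will use that $dK_A : \Omega^0 \to \Omega^1$ is the zero map, which gives $d_{A,K}\phi = d_A\phi$ for $\phi \in \Omega^0$. Applying $d_{A,K} = d_A - dK_A$ on $\Omega^1$ then gives
$$d_{A,K} \circ d_{A,K}\, \phi = d_A^2 \phi - dK_A(d_A\phi),$$
where the first term equals $[F_A, \phi]$ by the classical identity $d_A^2 = [F_A, \cdot\,]$ and the second equals $[K_A, \phi]$ by the gauge-equivariance formula $dK_A(d_A\phi) = [K_A, \phi]$ recorded immediately before the corollary. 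Collecting terms produces the stated right-hand side, with the two bracket contributions assembling into $[F_{A,K}, \phi] = [F_A, \phi] - [K_A, \phi]$.

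For the First Bianchi, I will expand directly:
$$d_{A,K} F_{A,K} = (d_A - dK_A)(F_A - K_A) = d_A F_A - d_A K_A - dK_A(F_A) + dK_A(K_A).$$
The first term vanishes by the classical Bianchi $d_A F_A = 0$; the last vanishes by Axiom 2(ii); and the middle two combine into $-(d_A K_A + dK_A(F_A)) = 0$ by Axiom 2(iv). Axioms 2(i) and 2(iii) play no role here.

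For the Second Bianchi, my approach is to use the Hilbert-space adjoint property established above, which permits integration by parts. Pairing against a compactly supported test 0-form $\phi$:
$$\left( d_{A,K}^* d_{A,K}^* F_{A,K},\, \phi\right) = \left( F_{A,K},\, d_{A,K}^2 \phi\right) = \left( F_{A,K},\, [F_{A,K}, \phi]\right),$$
using the Curvature Identity at the last step. It then suffices to show the pointwise inner product $\langle F_{A,K}, [F_{A,K}, \phi]\rangle$ vanishes at every point of $Z$: expanding $F_{A,K}$ in a local Lie-algebra basis and using the Ad-invariance identity $\langle X, [Y, Z]\rangle = \langle [X, Y], Z\rangle$ together with the symmetry of the metric inner product on $\frak{g}$-valued 2-forms and the antisymmetry of the Lie bracket, one sees the sum equals its own negative, hence vanishes. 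Since this holds for every test $\phi$, I will conclude $d_{A,K}^* d_{A,K}^* F_{A,K} = 0$ pointwise. I do not expect any essential obstacle; the Ad-invariance cancellation is the only subtle step, and it parallels the standard calculation showing $(F_A, [F_A, \phi]) = 0$ in the unperturbed setting.
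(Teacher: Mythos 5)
The paper offers no explicit proof of this corollary (it follows the one-line remark that \ref{axioms2} and the gauge-equivariance identity make $d_{A,{\bf K}}$ "behave algebraically like a usual covariant derivative"), so there is no paper argument to compare against. Your computations for all three identities are correct, and your route is the natural one: direct expansion using $d{\bf K}_A = 0$ on 0-forms, the identity $d{\bf K}_A(d_A\phi) = [{\bf K}_A,\phi]$, and Axioms 2(i),(ii),(iv) for the First Bianchi; and the duality/integration-by-parts argument (justified because $\phi$, hence $d_{A,{\bf K}}\phi$ and $d^2_{A,{\bf K}}\phi$, has compact support) together with the Ad-invariance cancellation for the Second Bianchi, where the pairing $(F_{A,{\bf K}},[F_{A,{\bf K}},\phi])$ vanishes pointwise because the $\frak{g}$-index sum is the contraction of a symmetric tensor ($\langle F^i,F^j\rangle_{\Lambda^2}$) with an antisymmetric one ($\langle e_i,[e_j,\phi]\rangle$).

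One issue worth flagging explicitly, which you glide past: your computation of the Curvature Identity yields $d_{A,{\bf K}}\circ d_{A,{\bf K}}\phi = [F_{A,{\bf K}},\phi]$, whereas the corollary as printed reads $[F_A,\phi]$. These differ by $[{\bf K}_A,\phi]$, which is nonzero in general. Your derivation is correct, and it is the version with $F_{A,{\bf K}}$ that is consistent with the stated philosophy ("$d_{A,{\bf K}}$ behaves like a covariant derivative") and that you in fact need in your proof of the Second Bianchi Identity. So the printed right-hand side appears to contain a typo ($F_A$ for $F_{A,{\bf K}}$); you should say so rather than write "collecting terms produces the stated right-hand side" when the result you obtain is not literally what is stated.

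Two small remarks on the Second Bianchi. First, since $d{\bf K}_A$ on 2-forms is defined only by duality, the operator $d^*_{A,{\bf K}}d^*_{A,{\bf K}}F_{A,{\bf K}}$ is not a priori a local differential expression, so the distributional/duality argument you give is not merely a convenience but essentially forced; it would be cleaner to say so. Second, the paper's adjoint formula $(d_{A,{\bf K}}V,W)=(V,d^*_{A,{\bf K}}W)$ is stated for $V,W$ both compactly supported; you should note that it suffices for one of the two to be compactly supported, which is what your application actually uses (you pair $F_{A,{\bf K}}$, not compactly supported, against compactly supported test forms).
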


\subsection{Perturbed Chern-Simons and Yang-Mills theory}\label{PerturbedCSandYMTheory}

Let ${\bf K}$ be a perturbation, with $K$ the induced perturbation on $Y$. Unless otherwise specified, we assume these satisfy \ref{axioms0}, \ref{axioms1}, and \ref{axioms2}.

\subsubsection{Chern-Simons theory}\label{PerturbedChern-SimonsTheory}

Define the \emph{perturbed Chern-Simons functional} by setting

$$\CS_{ {K}, P}: \A(Q) \longrightarrow \bb{R} , \indent \CS_{{K}, P}(\afa) \defeq - \frac{1}{2} \intd{Z} \: \langle F_{\afA, {\bf K}} \wedge  F_{\afA, {\bf K}} \rangle,$$
where $\afA$ is any connection in $\A^{1,2}(R; a)$. It follows from the first Bianchi identity that this is independent of the choice of $A$. Similarly, it depends on ${\bf K}$ only through its asymptotic value $K$. 

The perturbed Chern-Simons functional is invariant under the set of gauge transformations on $Q$ that can be homotoped to the identity; for a more general statement, see (\ref{csdeg}). The critical points of $\CS_{K, P}$ are precisely the $K$-flat connections, and the upper left-hand component of the matrix (\ref{extendedhess}) represents the Hessian of $\CS_{K, P}$ relative to the $L^2$-inner product. Consequently, a $K$-flat connection is acyclic if and only if it is (i) irreducible and (ii) a non-degenerate critical point of $\CS_{K,P}$, modulo gauge. 

\medskip

The perturbed Chern-Simons functional can be viewed as a relative characteristic class for 4-manifolds with boundary or cylindrical ends. As such, it is intimately related to an absolute characteristic class for \emph{closed} 4-manifolds. We describe this now. Fix a principal $G$-bundle $R$ over a closed, connected, oriented 4-manifold $X$. Then
$$\kappa(R) \defeq \frac{1}{2} \intd{X} \langle F_A \wedge F_A \rangle$$
depends only on the topological type of $R$. The following examples relate this to standard characteristic classes (recall from (\ref{innerproduct}) that the inner product $\langle \cdot , \cdot \rangle$ is induced from an embedding $G \hookrightarrow \U(N)$).

\begin{example}\label{ex1}
(a) Suppose $G = \SU(N)$ for $N \geq 2$, and the embedding $G \hookrightarrow \U(N)$ from above is the inclusion. Then the Chern-Weil formula gives
$$\kappa(R) =  2 c_2(R) \left[ X \right] \in 2 \bb{Z}.$$

\medskip

(b) Suppose $G = \U(N)$ for $N \geq 2$, and the embedding $G \hookrightarrow \U(N)$ is just the identity. Then 
$$\kappa(R) =  2 \left( c_2(R)  - \frac{1}{2} c_1^2(R) \right) \left[ X \right] \in  \bb{Z}.$$

\medskip

(c) Suppose $G = \SO(r)$ for $r \geq 2$, and the embedding $G \hookrightarrow \SU(N) \subset \U(N)$ is given by the complexified adjoint action of $G$ on $\frak{g}^{\bb{C}}$. Then the induced inner product on $\frak{g}$ is $-(2 \pi^2)^{-1}$ times the Killing form, and 
$$\kappa(R) =  -2(r-2) p_1(R) \left[ X \right] \in 2(r-2) \bb{Z}$$ 
where $p_1$ is the Pontryagin class. Note that this vanishes for $r = 2$, reflecting the fact that $\SO(2)$ is abelian.

\medskip

(d) Suppose $G =\PU(r)$ for $r \geq 2$, and the embedding $G \hookrightarrow \SU(N) \subset \U(N)$ is given by the complexified adjoint action. Then

$$\kappa(R) =  2  q_4(R) \left[ X \right]  \in 2 \bb{Z},$$
where $q_4(R) \in H^4(X, \bb{Z})$ is a $\PU(r)$-generalization of the first Pontryagin number; see \cite{Woody2} and \cite{DunPU}. 
\end{example}

More generally, we have the following. 

\begin{lemma}\label{lemma1}
Fix $G$ and $\langle \cdot, \cdot \rangle$ as above, and let $X$ be a closed, connected, oriented 4-manifold $X$. Then $\kappa(R)$ is an integer for every principal $G$-bundle $R \rightarrow X$. If $G$ is not abelian, then there are principal $G$-bundles $R$ for which $\kappa(R)$ is non-zero.
\end{lemma}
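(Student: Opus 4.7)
My plan is to reduce both claims to classical facts about $\U(N)$-bundles via the faithful embedding $G \hookrightarrow \U(N)$ fixed in (\ref{innerproduct}). Given a principal $G$-bundle $R \to X$, I would form the associated $\U(N)$-bundle $\tilde R \defeq R \times_G \U(N)$; any connection $A$ on $R$ then extends to a connection $\tilde A$ on $\tilde R$ whose curvature is identified with $F_A$ under the inclusion of adjoint bundles $R(\frak{g}) \hookrightarrow \tilde R(\frak{u}(N))$. By the definition of the inner product,
$$\kappa(R) \;=\; \frac{1}{2}\intd{X} \langle F_A \wedge F_A\rangle \;=\; -\frac{1}{4\pi^2}\intd{X} \mathrm{tr}\bigl(F_{\tilde A} \wedge F_{\tilde A}\bigr).$$

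Next I would invoke Chern--Weil theory for $\U(N)$. Expanding $\det\bigl(I + \tfrac{i}{2\pi} F_{\tilde A}\bigr)$ term-by-term gives the identity $-\tfrac{1}{4\pi^2}\mathrm{tr}(F_{\tilde A} \wedge F_{\tilde A}) = c_1(\tilde R)^2 - 2c_2(\tilde R)$ in de Rham cohomology, and integrating yields $\kappa(R) = (c_1^2 - 2c_2)[X] \in \bb{Z}$ (up to an overall sign determined by conventions). The coefficient $(2\pi^2)^{-1}$ in (\ref{innerproduct}) is precisely tuned so that the Chern--Weil output is the integer class $c_1^2 - 2 c_2$ rather than the half-integer $c_2 - \tfrac12 c_1^2$; this is the essential bookkeeping point of the first half.

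For the non-vanishing statement I would exhibit a simple subgroup of $G$ and lift a standard instanton from there. Since $G$ is compact, connected, and non-abelian, the derived subgroup $[G, G]$ is a non-trivial compact connected semisimple Lie group, so its Lie algebra contains a compact simple summand, and any such summand contains an $\frak{su}(2)$-subalgebra (built from an $\frak{sl}_2$-triple at any root of its complexification). Since $\SU(2)$ is simply connected, this integrates to a closed subgroup $H \subseteq G$ isomorphic to $\SU(2)$ or $\SO(3)$. Take $X = S^4$ and let $R_0 \to S^4$ be a topologically non-trivial $H$-bundle (the quaternionic Hopf bundle for $\SU(2)$, or its adjoint for $\SO(3)$). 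Setting $R \defeq R_0 \times_H G$, any connection $A_0$ on $R_0$ induces a connection $A$ on $R$ with $F_A$ identified with $F_{A_0}$ under $R_0(\frak h) \hookrightarrow R(\frak g)$, so $\kappa(R) = \tfrac12 \int_{S^4}\langle F_{A_0}\wedge F_{A_0}\rangle_{\frak g\vert_{\frak h}}$. The restriction of $\langle\cdot,\cdot\rangle$ to the simple algebra $\frak h$ is $\mathrm{Ad}(H)$-invariant and positive definite, hence a positive multiple of any other such form, so $\kappa(R)$ is a positive scalar times the $H$-level invariant of $R_0$, which is non-zero by Example~\ref{ex1}(a) or (c).

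The main obstacle I anticipate is not analytic but bookkeeping: pinning down the precise sign and rational coefficient that emerge from the Chern--Weil expansion, and checking that the normalization in (\ref{innerproduct}) really does land $\kappa(R)$ in $\bb Z$ rather than $\tfrac12 \bb Z$. The rest is soft: the existence of an $\SU(2)$ or $\SO(3)$ subgroup in any non-abelian compact connected $G$ is classical, and the relevant $H$-bundles on $S^4$ are standard.
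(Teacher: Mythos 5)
Your proposal is correct and tracks the paper's argument closely: both parts reduce to the same two facts, namely that $-\tfrac{1}{4\pi^2}\mathrm{tr}(F\wedge F)$ represents the integral class $\pm(2c_2-c_1^2)$ of the associated $\U(N)$-bundle, and that a compact connected non-abelian $G$ receives a homomorphism from $\SU(2)$ pushing forward a nontrivial bundle on $S^4$. The only cosmetic differences are that the paper phrases the integrality step through pulling back $2c_2 - c_1^2 \in H^4(B\U(N),\bb{Z})$ along $BG \hookrightarrow B\U(N)$ rather than via Chern--Weil directly on $\tilde R = R\times_G \U(N)$ (these are equivalent, since $\tilde R$ is classified by the composite $X \to BG \to B\U(N)$), and that the paper locates the map $\SU(2)\to G$ via the finite-cover structure theorem and the cohomological identity $\phi^*\kappa_G = j\,\kappa_{\SU(2)}$ rather than via an $\frak{sl}_2$-triple and uniqueness of invariant forms on simple Lie algebras; both routes land in the same place, and your observation that a nonzero real number which you have already shown is an integer must be a nonzero integer closes the argument cleanly.
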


\begin{proof}
The class
$$2c_2 - c_1^2 \in H^4(B \U(N), \bb{Z}) = \bb{Z}\langle c_2 \rangle \oplus \bb{Z} \langle c_1^2 \rangle$$
is a primitive element of the 4th cohomology of the classifying space $B\U(N)$. Moreover, the embedding $G \hookrightarrow \U(N)$ induces an inclusion
$$BG \hookrightarrow B\U(N),$$
and the universal property for $B\U(N)$ shows that this inclusion is unique up to homotopy. Let 
$$\kappa_G \in H^4(G, \bb{Z})$$ 
be the pullback of $2c_2 - c_1^2$ under this embedding. 

Fix a bundle $R \rightarrow X$ and consider its classifying map $\psi_R: X \hookrightarrow BG$. Using this, we can pull back $\kappa_G$ to a class in $H^4(X, \bb{Z})$. By Example \ref{ex1} (b) and the definition of the inner product on $\frak{g}$, this pulled back class is exactly $\kappa(R)  \in \bb{R}$. That is,

$$ \kappa(R) = \left(\psi_R^* \kappa_G\right) \left[ X \right] \in  \bb{Z},$$
which shows $\kappa(R)$ is an integer.

\medskip

To see this is non-zero when $G$ is not abelian, first note that each compact Lie group $G$ has a finite cover that is a product of tori and compact simple Lie groups. In particular, when $G$ is not abelian, there is a Lie group homomorphism
$$\phi: \SU(2) \longrightarrow G$$
with the property that 
$$\phi^* \kappa_G  = j\: \kappa_{\SU(2)} \in H^4(B\SU(2), \bb{Z}) $$
for some non-zero integer $j$ (this integer reflects the aforementioned finite cover of $G$). We already know that there are $\SU(2)$-bundles $R' \rightarrow X$ for which $(\psi_{R'}^*\kappa_{\SU(2)}) \left[ X \right]$ is non-zero. Fixing such a bundle $R'$, define
$$R \defeq R' \times_{\SU(2)} G,$$
where $\SU(2)$ acts on $G$ by the homomorphism $\phi$. Then
$$\kappa(R) = \left(\phi_R^* \kappa_G \right) \left[ X \right] = j \left( \phi_{R'}^* \kappa_{\SU(2)} \right) \left[ X \right] \neq 0,$$
as desired.
\end{proof}

As an application of this characteristic number, let $Q \rightarrow Y$ be a principal $G$-bundle on a 3-manifold, and suppose $u$ is a gauge transformation on $Q$. Then the mapping torus of $u$ is a bundle $Q_u$ over $S^1 \times Y$, and the integer $\kappa(Q_u) \in \bb{Z}$ depends only on $u$ up to isotopy. Moreover, it follows immediately from the definitions that if $a$ is any connection on $Q$ and $K$ is any perturbation, then
\begin{equation}\label{csdeg}
\CS_{K, P}(u^*a) - \CS_{K, P}(a) = \kappa(Q_u) \in \bb{Z}.
\end{equation}

\subsubsection{Yang-Mills theory}\label{PerturbedYang-MillsTheory}

		The \emph{perturbed Yang-Mills functional}, or \emph{energy}, is defined by

$$\YM_{{\bf K}}(\afA) \defeq \frac{1}{2} \Vert F_{\afA, {\bf K}} \Vert^2_{L^2(Z)} =\frac{1}{2} \intd{Z} \: \langle F_{\afA, {\bf K}} \wedge * F_{\afA, {\bf K}} \rangle.$$
We view this as a real-valued function on $\A^{1,2}(P; a)$. This is invariant under the action of the group 
$$\G(P; e)$$ 
of smooth gauge transformations on $P$ that, together with their derivatives, decay rapidly down the cylindrical end to the identity gauge transformation $e$ on $Q$.

\begin{remark}\label{gaugeactionforlargep}
(a) Suppose $a$ is irreducible. Then it follows that the gauge group $\G(P; e)$ acts freely on $\A(P; a)$. See \cite[Prop. 3.7]{DunAFI}. 

\medskip

(b) Let $\smash{\G^{k,p}(P; e)}$ denote the $W^{k,p}$-completion of $\G(P;e)$. When $kp > 4$, this forms a Banach Lie group that acts smoothly and smoothly on $\A^{k,p}(P;a)$. Moreover, the perturbed Yang-Mills functional is invariant under this action. However, the set $\smash{\G^{2,2}(P; e)}$ is not a group, due to the failure of the Sobolev multiplication theorem at the borderline level. 
\end{remark}

		 The critical points of $\YM_{\bf K}$ on $\A^{1,2}(P; a)$ are those connections that satisfy
		$$d_{\afA, {\bf K}}^* F_{\afA, {\bf K}} = 0.$$
		We call these connections \emph{${\bf K}$-YM}. As in the unperturbed case, the perturbed Yang-Mills and Chern-Simons functionals are intimately related. Indeed, for any $A \in \A^{1,2}(P; a)$, we have

\begin{equation}\label{ympertmotheruck}
\YM_{{\bf K}}(\afA)  =   \Vert F_{\afA, {\bf K}}^+ \Vert^2_{L^2(Z)}  +  \CS_{K,P}(\afa),
\end{equation}
where
$$F_{\afA, {\bf K}}^+ \defeq \frac{1}{2} \left(F_{\afA, {\bf K}} + * F_{\afA,{\bf K}} \right)$$ 
is the anti-self dual part. We say a connection is \emph{${\bf K}$-ASD} if $F_{\afA, {\bf K}}^+  = 0$. The ${\bf K}$-ASD connections are automatically $ {\bf K}$-YM by the first Bianchi identity. Moreover, it follows from (\ref{ympertmotheruck}) that if there are any ${\bf K}$-ASD connections in $\A^{1,2}(P; a)$, then they are the \emph{global} minimizers of $\YM_{{\bf K}}$ on $\A^{1,2}(P; \afa)$. 

The linearization of the map $\afA \mapsto F_{\afA, {\bf K}}^+$ at a connection $\afA$ is the operator 
$${d_{\afA, {\bf K}}^+} \defeq \frac{1}{2} (1 + *) d_{A, {\bf K}}: \Omega^1(Z, P(\frak{g})) \rightarrow \Omega^+(Z, P(\frak{g})).$$ 
We will say that a ${\bf K}$-ASD connection $\afA$ is \emph{ASD-regular} if $\smash{d_{\afA, {\bf K}}^+}$ is surjective. We will say that ${\bf K}$ is \emph{ASD-regular} if each of the following holds.

\begin{itemize}
\item The perturbation $K$ satisfies \ref{axioms0}.
\item All $K$-flat connections on $Q$ are acyclic. 
\item For all $K$-flat $a$, every ${\bf K}$-ASD connection $A \in \A^{1, 2}(P; a)$ is ASD-regular. 
\item For each pair $a^-, a^+$ of $K$-flat connections, if $A$ is any ${\bf K}^Y$-ASD connection on $\bb{R} \times Q$ that is asymptotic to $a^\pm$ at $\pm \infty$, then $A$ is ASD-regular. 
\end{itemize}
In the last bullet, ${\bf K}^Y$ is the perturbation on $\bb{R} \times Y$ induced from $K$ as in Remark \ref{defofKY}, and the ${\bf K}^Y$-ASD condition should be defined using the cylindrical metric $ds^2 + g^Y$ on $\bb{R} \times Y$. 

The primary usefulness of ASD-regularity is that it asserts that for each $K$-flat $a$, the moduli space
\begin{equation}\label{modulispace}
\left. \left\{ A  \in \A^{1,p}(P; a) \: \vert \: F^+_{A, {\bf K}} = 0 \right\} \right/ \G^{2,p}(P; e)
\end{equation}
of ${\bf K}$-ASD connections is a smooth manifold. Here we need to assume $p > 2$ in order to have a good gauge group. 

\begin{remark}
Elsewhere in the literature, the term we are calling `ASD-regular' is often simply called `regular'. We have introduced the prefix `ASD' to help distinguish the term from the function-theoretic notion of regularity. 
\end{remark}

\subsubsection{The index}\label{TheIndex}

			Assume \ref{axioms0}. Fix a $K$-flat connection $a$, and assume this is {acyclic}. For $A \in \A^{1,2}(P; a)$, consider the operator
$$d_{A, {\bf K}}^+ \oplus d_{A, {\bf K}}^*: \Omega^1(Z, P(\frak{g})) \longrightarrow \Omega^+(Z, P(\frak{g}))  \oplus \Omega^0(Z, P(\frak{g})).$$
Since $a$ is acyclic, this operator is Fredholm in suitable Sobolev completions of the domain and codomain. In particular, it has a well-defined Fredholm index 
$$\mathrm{Ind}_{K, P}(a)$$
 and this index depends only on $a, K$ (the asymptotic values of $A, {\bf K}$). This index is exactly the dimension of the moduli space (\ref{modulispace}).
 
 Fix a gauge transformation $u$ on $Q$. Then by the argument of \cite[Prop. 3.16]{Donfloer}, we have the following action-index identity
\begin{equation}\label{actionindex}
n_G \left( \CS_{K, P}(u^*{\afa}) - \CS_{K, P} ({\afa})  \right) =  \mathrm{Ind}_{K, P}(u^* {\afa})  - \mathrm{Ind}_{K, P}({\afa}) .
\end{equation}
Here $n_G \geq 0$ is a number depending only on the Lie group $G$, and the choice of inner product on $\frak{g}$. Given our choice of inner product, it follows from (\ref{csdeg}) that $n_G$ is a rational number. When $G$ is not abelian, then $n_G$ is uniquely determined by (\ref{actionindex}) since there are $a, u$ for which both sides are non-zero. Conversely, when $G$ is abelian, both sides of (\ref{actionindex}) are zero for all $a, u$; this reflects the triviality of the group $\pi_3(G) = 0$. In the abelian case, we are therefore free to declare $n_G = 1$.

\begin{example}
(a) Suppose $G = \SU(N)$ and the embedding $G \hookrightarrow \U(N)$ is the identity. Then $n_G = 2r$. See \cite[Prop. 3.16]{Donfloer}.

\medskip

(b) Suppose $G =\PU(r)$ and the embedding $G \hookrightarrow \SU(N) \subset \U(N)$ is given by the complexified adjoint action. Then $n_G = 1$. See \cite[Prop. 6.6]{DunAFI}.
\end{example}

\subsection{Uhlenbeck compactness}\label{UhlenbeckCompactness}

We will need a perturbed version of Uhlenbeck's compactness theorem that keeps track of energy loss down the cylindrical end. To state the version we need, fix a $K$-flat connection $a$. Then we define a \emph{broken trajectory on $Z$ asymptotic to $a$} to consist of the following:
				\begin{itemize}
				\item A tuple $(\afa^0, \afa^1,  \ldots, \afa^J)$ of $K$-flat connections on $Q$ with $\afa^J = a$. 
				\item A connection $\afA^0 \in \A^{1,2}(P; a^0)$ asymptotic to $a^0$.
				\item A tuple $(B^1, \ldots, B^J)$ of connections on $\bb{R} \times Q$.
				\end{itemize}
				These are required to satisfy
				$$\limd{s \rightarrow - \infty} B^j \vert_{\left\{s \right\} \times Y} = \afa^{j-1}		, \hspace{2cm} \limd{s \rightarrow + \infty} B^j \vert_{\left\{s \right\} \times Y} =  \afa^{j}.$$
				We will typically denote a broken trajectory by $(A^0; B^1, \ldots, B^J)$, with the asymptotic $K$-flat connections $a^j$ understood. A broken trajectory is a \emph{broken ${\bf K}$-YM trajectory} (resp. \emph{broken ${\bf K}$-ASD trajectory}) if $A^0$ and the $B^j$ are all ${\bf K}$-YM (resp. ${\bf K}$-ASD). 
				
				Let ${\mathcal{H}}$ be a function space (e.g., $\C^\infty$ or $W^{1,p}$). We will say that a sequence $\afA_n \in \A^{1,p}(P; \afa)$ of connections \emph{converges in ${\mathcal{H}}$} to a broken trajectory if, for each $1 \leq j \leq J$, there is a tuple $\smash{\{s_n^j \small\}_n}$ of positive real numbers satisfying the following:
				\begin{itemize}
				\item For each compact subset $C \subset Z$, the sequence $\afA_n$ converges in ${\mathcal{H}}(C)$ to $\afA^0$.
				\item For each $j$, the sequence $s_n^j$ increases to $\infty$.
				\item For each $n$, we have $s_n^1 <  s_n^2 < \ldots < s_n^J$.
				\item Fix $1\leq j \leq J$, and let 
				$${\tau_{s_n^j}^* \afA_n}$$
				 denote the connection on $\small[-s_n^j, \infty \small) \times Y$ obtained by translating $ \afA_n \vert_{\left[0, \infty \right) \times Y}$. Then for each compact set $ C \subset \bb{R} \times Y$, the sequence $\smash{\tau_{s_n^j}^*  \afA_n}$ converges in $\smash{{\mathcal{H}}(C)}$ to $\smash{B^j}$. 
				\end{itemize}				 		
				We say that the sequence \emph{converges modulo bubbling} if the convergence to $A^0$ holds on the complement of a finite set of points on $Z$, and the convergence to each $B^j$ is on the complement of a finite set of points in $\bb{R} \times Y$ (this set is allowed to depend on $j$). For more details, see \cite[Chapter 6]{MMR} or \cite{Fl1}; see also \cite{S1} for a nice treatment in the closely related case of holomorphic curves.

				Here is a version of Uhlenbeck's compactness theorem with a curvature hypothesis that a priori excludes bubbling.

\begin{theorem}\label{UhlComp1} (Uhlenbeck \cite{U2})
				Assume ${\bf K}$ is a perturbation satisfying \ref{axioms0} and \ref{axioms1}, and so that all $K$-flat connections are acyclic. Fix $p > 2$, and suppose $\afA_n \in \A^{1,p}(P; a)$ is a sequence of smooth $ {\bf K}$-YM connections with
				\begin{equation}\label{2rtrrr}
				\sup_n \YM_{\bf K}(\afA_n) < \infty, \indent \textrm{and} \indent \sup_n \Vert F_{\afA_n} \Vert_{L^\infty(Z)} < \infty.
				\end{equation}
				Then there is a subsequence (still denoted by $A_n$), and a sequence of gauge transformations $U_n \in \A^{2,p}(P; e)$ so that the $U_n^* A_n$ converge in $\C^\infty$ to a broken $ {\bf K}$-YM trajectory asymptotic to $a$. 
				\end{theorem}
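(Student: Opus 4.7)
The plan is to adapt the standard bubble-excluding version of Uhlenbeck compactness (as in Donaldson-Kronheimer or Morgan-Mrowka-Ruberman, Chapter 6) to the cylindrical end setting, using the acyclicness of $K$-flat connections to rigidify the asymptotic behavior. The $L^\infty$-bound on $F_{\afA_n}$ is what will let us avoid bubbling; the energy bound controls how many ``break instants'' can appear.

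First I would extract the limiting connection $\afA^0$. Working locally, the uniform bound on $\Vert F_{\afA_n} \Vert_{L^\infty}$ and Axiom 1 give a uniform local bound on $\Vert F_{\afA_n, {\bf K}} \Vert_{L^\infty}$. Applying Uhlenbeck's local gauge fixing in small balls of radius $r$ determined by the $L^\infty$-curvature bound yields local Coulomb gauges. In these gauges the perturbed Yang-Mills equation $d_{\afA, {\bf K}}^* F_{\afA, {\bf K}} = 0$ together with $d^* \afA = 0$ becomes a quasi-elliptic system, and Axiom 1 provides the estimates needed to bootstrap to uniform $C^\infty_{\text{loc}}$ bounds. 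Patching local gauges over an exhaustion $\bigcup_k (Z_0 \cup [0, R_k] \times Y)$ of $Z$ by compact sets, then passing to a diagonal subsequence, produces gauge transformations $U_n$ for which $U_n^* \afA_n$ converges in $C^\infty_{\text{loc}}(Z)$ to a smooth ${\bf K}$-YM connection $\afA^0$ on $P$, of finite energy.

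Next I would determine the asymptotics of $\afA^0$ and the break points. Since $\YM_{\bf K}(\afA^0) < \infty$ and flat connections are isolated (by acyclicness), a standard argument using the Palais--Smale-type estimate on long cylinders $[s, s+1] \times Y$ (available from Axiom 1 and acyclicness, see for instance the argument in \cite[Ch.~6]{MMR}) shows that $\afA^0 \vert_{[R, \infty) \times Y}$ converges exponentially to some $K$-flat connection $\afa^0$ on $Q$ as $R \to \infty$. If $\afa^0$ is gauge-equivalent to $\afa = \afa^J$ already and no energy is lost, we are done with $J = 0$. Otherwise there exists a sequence $s_n \to \infty$ and an $\eps > 0$ (smaller than the Chern-Simons gap between distinct $K$-flat connections, which is positive by acyclicness) such that
$$\Vert F_{\afA_n, {\bf K}} \Vert_{L^2([s_n - T, s_n + T] \times Y)}^2 \geq \eps$$
for any fixed $T > 0$ and all large $n$. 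Pick $s_n^1$ to be the smallest such bubbling scale.

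Then I would iterate. Let $\tau_{s_n^1}^* \afA_n$ denote the translated sequence on $[-s_n^1, \infty) \times Y$, which exhausts $\bb{R} \times Y$. The ${\bf K}$-YM equation on the cylinder is translation-invariant (by Remark \ref{defofKY}) and the curvature bound is preserved, so the same local compactness argument applied to these translates yields a $C^\infty_{\text{loc}}$-limit $B^1$ on $\bb{R} \times Y$ that is ${\bf K}^Y$-YM. The choice of $s_n^1$ as the \emph{first} bubble instant guarantees that the asymptotic limit of $B^1$ at $-\infty$ is gauge-equivalent to $\afa^0$. By acyclicness and the finite-energy argument once more, $B^1$ also has a well-defined $K$-flat limit $\afa^1$ at $+\infty$. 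Continuing this process to choose $s_n^2 > s_n^1$ at the next bubble instant, and so on, produces candidates $B^1, \ldots, B^J$. The process terminates in finitely many steps because each $B^j$ carries energy at least $\eps$ and the total energy is uniformly bounded; the last asymptote is forced to be $\afa^J = \afa$ by the initial condition $\afA_n \in \A^{1,p}(P; \afa)$ and energy accounting.

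The main obstacle, in my view, is making the patching of local gauge transformations into a single global $U_n$ compatible both with the convergence on compacta and with the repeated translations $\tau_{s_n^j}^*$ used to extract the $B^j$: one needs to arrange the $U_n$ so that all the limits $\afA^0, B^1, \ldots, B^J$ appear simultaneously in the same trivialization, which is the standard (but delicate) ``sewing'' step in broken-trajectory convergence. The acyclic hypothesis is what makes this sewing possible, since it forces the chain of asymptotic values $\afa^0, \ldots, \afa^J$ to be discrete and the decay to each $\afa^j$ to be exponential, so transition gauges between consecutive trivializations stay bounded in $C^\infty$ on overlaps.
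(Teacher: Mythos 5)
Your proposal is correct and follows essentially the approach the paper defers to: the paper gives no proof beyond the remark that the $L^\infty$-curvature bound in (\ref{2rtrrr}) excludes bubbling, so the standard broken-trajectory Uhlenbeck compactness argument (local Coulomb gauges and bootstrap over a compact exhaustion, translation down the end to extract the cylinder pieces $B^j$, termination via energy quantization and acyclicness) carries over, with \ref{axioms1} supplying the bounds on the non-local perturbation term needed for the elliptic bootstrap. Your identification of the gauge-sewing step as the delicate point, and of acyclicness as what controls it, matches exactly what the paper's cited references \cite{MMR} and \cite{DunAFI} handle.
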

				
Due to the presence of the uniform $L^\infty$-bound in (\ref{2rtrrr}), the proof essentially follows from the same analysis as the unperturbed version of Uhlenbeck's theorem \cite{U2}. See \cite[Prop. 3.8]{DunAFI} for details in the presence of a perturbation. The next remark addresses the situation in the presence of bubbling.

				\begin{remark}\label{UhlRemark}
				(a) Suppose $A_n$ is a sequence of connections in $\A^{1,p}(P; a)$ (not necessarily ${\bf K}$-YM) with a uniform bound  on $\Vert F_{A_n} \Vert_{L^2(Z)}$ and $\Vert F_{A_n} \Vert_{L^\infty(Z)}$. Then Uhlenbeck's weak compactness theorem \cite[Theorem B]{Wuc} implies that a subsequence converges weakly in $W^{1,p}$ on compact subsets, after possibly applying suitable gauge transformations. 
				
				More generally, suppose there was a finite set of points $\left\{z_n \right\} \subset Z$ with the property that, for each compact $\smash{B \subset Z \backslash \left\{z_n \right\}}$ there is a uniform bound on $\smash{\Vert F_{A_n} \Vert_{L^\infty(B)}}$. Then modulo gauge, a subsequence converges weakly in $W^{1,p}$ on compact subsets of $Z \backslash \left\{z_n \right\}$. (Note that even if the $A_n$ are ${\bf K}$-ASD, more work needs to be done to conclude that the limit is ${\bf K}$-ASD as well. This is because we have made no assumptions about the behavior of ${\bf K}$ under weak $W^{1,p}$-limits. This is described further in (b).)
				
				\medskip
				
				(b) As described by Kronheimer \cite{Kron}, due to the non-local nature of the typical perturbations appearing in gauge theory (i.e., holonomy perturbations), compactness statements \emph{in the presence of perturbations and bubbling} are rather subtle. For example, suppose the $A_n$ are ${\bf K}$-ASD, but the $\Vert F_{A_n} \Vert_{L^\infty}$ are unbounded. Then one cannot expect to obtain the strong $\C^\infty$-convergence on the complement of the bubbling set, as is the case in the unperturbed setting. However, Kronheimer does prove strong $W^{1,p}$-convergence in the complement of bubbling set, at least for perturbations ${\bf K}$ satisfying the conclusion of \cite[Lemma 10]{Kron}. 
				
				It is perhaps worth emphasizing that the issues here are not so much due to establishing bounds on sequence of connections, but in showing that the perturbation behaves well relative to weakly convergent subsequences. 
				\end{remark}

				As in the unperturbed case, Theorem \ref{UhlComp1} and the acyclic assumption can be used to show that any finite-energy ${\bf K}$-YM connection $A$ on $Z$ is asymptotic to some $K$-flat connection $a \in \A(Q)$; see \cite[Section 4.1]{Donfloer}. Moreover, writing
				$$A\vert_{\left[0, \infty \right) \times Y} = a(s) + p(s) \: ds$$ 
				this convergence is exponential in the sense that $a(s)$ (resp. $p(s)$) converges to $a$ (resp. to $0$) exponentially and in $\C^\infty(Y)$. This implies the following refinement of Uhlenbeck's theorem.

				\begin{corollary}\label{UhlCor}
				Assume ${\bf K}$ is a perturbation satisfying \ref{axioms0} and \ref{axioms1}, and so that all $K$-flat connections are acyclic. Suppose $A_n$ is a sequence of ${\bf K}$-YM connections converging in $\C^\infty$ to a broken ${\bf K}$-YM trajectory $(A^0; B^1, \ldots, B^J)$. Fix $1 \leq p \leq \infty$ and an integer $\ell \geq 0$. Then for each $\delta > 0$, there are compact sets
				$$C_0 \subset Z, \indent  \left[-c_j, c_j \right] \subset \bb{R} \times Y, \indent 1 \leq j \leq J$$
				so that
				$$\Vert A_n - A^0 \Vert_{W^{\ell, p} (C_0)} + \sumdd{j = 1}{J} \Vert \tau_{s_n^j}^*A_n - A^0 \Vert_{W^{\ell, p}(\left[-c_j, c_j \right])}  < \delta,$$
				and 
				$$\begin{array}{l}
				\Vert A_n - a^0 \Vert_{W^{\ell, p}(Z \backslash C_0)} \\
				\indent + \sumdd{j = 1}{J} \Vert \tau_{s_n^j}^*A_n - a^{j-1} \Vert_{W^{\ell, p}(\left(-\infty, -c_j \right]) }  + \sumdd{j = 1}{J} \Vert \tau_{s_n^j}^*A_n - a^{j} \Vert_{W^{\ell, p}(\left[ c_j, \infty \right)) } < \delta,
				\end{array}$$ 
				for all sufficiently large $n$.
				\end{corollary}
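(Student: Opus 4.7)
The plan is to combine the exponential decay of the limiting pieces to their asymptotic flat connections with the $\C^\infty$-convergence on compact sets, tied together by the triangle inequality. Since $A^0$ and each $B^j$ are finite-energy ${\bf K}$-YM connections on cylindrical ends whose asymptotic flat limits are acyclic, the paragraph preceding the corollary ensures that they decay exponentially in $\C^\infty$ to those limits. Given $\delta > 0$, I would first use this to fix compact sets $C_0 \subset Z$ and $[-c_j, c_j] \subset \bb{R} \times Y$ large enough that $A^0$ is within $\delta/(3J+3)$ of $a^0$ in $W^{\ell,p}(Z \setminus C_0)$, and each $B^j$ is within $\delta/(3J+3)$ of $a^{j-1}$ in $W^{\ell,p}((-\infty, -c_j])$ and of $a^j$ in $W^{\ell,p}([c_j, \infty))$.

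Next, I would invoke the hypothesis that $A_n \to A^0$ in $\C^\infty_\mathrm{loc}(Z)$ and $\tau_{s_n^j}^* A_n \to B^j$ in $\C^\infty_\mathrm{loc}(\bb{R} \times Y)$. Applied to the now-fixed compact sets, this yields $\Vert A_n - A^0 \Vert_{W^{\ell,p}(C_0)}$ and each $\Vert \tau_{s_n^j}^* A_n - B^j \Vert_{W^{\ell,p}([-c_j, c_j])}$ smaller than $\delta/(3J+3)$ for $n$ large, which produces the first inequality. For the second inequality I would combine both ingredients via the triangle inequality, writing $\Vert A_n - a^j\Vert \leq \Vert A_n - (\mathrm{limit})\Vert + \Vert (\mathrm{limit}) - a^j\Vert$ on each relevant region; exponential decay handles the second summand, and the $\C^\infty$-convergence handles the first wherever the region is still relatively compact.

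The main obstacle is the "transition" portion of $Z \setminus C_0$ that lies between the core of $A^0$ and the first translated window around $s = s_n^1$, and similarly between consecutive translated windows: these regions grow without bound as $n \to \infty$, so $\C^\infty_\mathrm{loc}$-convergence does not directly control $\Vert A_n - A^0\Vert$ there. To handle them, I would exploit that the energy of $A_n$ on such tail regions tends to zero as $n \to \infty$, since the total energy is uniformly bounded and the Uhlenbeck convergence forces the energy to concentrate on the core of $A^0$ and on the translation windows of the $B^j$. A standard small-energy estimate for ${\bf K}$-YM connections on long cylinders, combined with the acyclicity of the intermediate flats $a^j$, then yields $W^{\ell,p}$-closeness of $A_n$ to the appropriate $a^j$ on the corresponding tail region after a suitable gauge transformation. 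This small-energy cylinder argument is the technical heart of the proof and parallels the analytic treatment of broken trajectories in instanton Floer theory, as in \cite[Chapter 6]{MMR}.
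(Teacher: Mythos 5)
The paper supplies no written proof of Corollary \ref{UhlCor}; it simply asserts, in the paragraph immediately preceding, that the result follows from Theorem \ref{UhlComp1} together with the fact that finite-energy ${\bf K}$-YM connections with acyclic asymptotics decay exponentially, in $\C^\infty(Y)$, to their $K$-flat limits. Your proposal fills in that assertion with essentially the intended argument: use exponential decay of $A^0$ and each $B^j$ to fix the compact sets $C_0$ and $[-c_j, c_j]$; then use $\C^\infty$ convergence on these fixed compact sets for the first inequality; then control the transition ("neck") regions --- which are the genuine content of the corollary --- via a small-energy/long-cylinder estimate, in the spirit of \cite[Chapter 6]{MMR} and \cite{Fl1}. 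You also correctly read $B^j$ where the displayed statement has the typo $A^0$ in the second term of the first inequality.

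Two points are worth tightening. First, you appeal to the fact that the energy of $A_n$ on the neck regions tends to zero, but this is not automatic from the hypotheses; it is itself a consequence of the exponential-decay-on-long-cylinders estimate for ${\bf K}$-YM connections with uniformly small curvature and acyclic nearby flats, so the logical order should be: small $L^\infty$ curvature on the neck (from the uniform bound in Theorem \ref{UhlComp1}) plus acyclicity gives exponential curvature decay there, which in turn gives both the small energy and the $W^{\ell,p}$-closeness to the intermediate $a^j$. Second, your phrase "after a suitable gauge transformation" would, if literal, prove a weaker statement than Corollary \ref{UhlCor}, which is gauge-fixed once the $\C^\infty$-convergence has been set up. This is not a genuine obstruction --- the gauge is already pinned down at both ends of each neck by the convergence to $A^0$ and to the $B^j$, and the acyclicity of $a^j$ then forces closeness on the full neck in the given gauge rather than merely modulo gauge --- but the argument should state this rather than leave the gauge transformation dangling.
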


				\begin{remark}\label{FloerGluing}				
				Assume ${\bf K}$ is ASD-regular. Then Floer's gluing theorem \cite{Fl1} provides a converse to the above compactness statement for ${\bf K}$-ASD connections. Namely, suppose $(A^0; B^1, \ldots, B^J)$ is a broken ${\bf K}$-ASD trajectory asymptotic to $a$. Then using an implicit function theorem, Floer showed there is a sequence of ${\bf K}$-ASD connections $A_n \in \A^{1,p}(P; a)$ that converge in $\C^\infty$ to $(A^0; B^1, \ldots, B^J)$. 
				\end{remark}

\subsection{Existence of suitable perturbations}\label{TheClassOfPerturbations}
	
	To avoid a vacuous discussion below (in particular, in Section \ref{ConvergenceAtInfiniteTime}), we need some sort of existence statement for perturbations satisfying the axioms above.

	\begin{theorem}\label{existencetheorem}
	Assume $Q \rightarrow Y$ is such that all flat connections are irreducible. Then there exists a perturbation ${\bf K}$ that is ASD-regular and satisfies \ref{axioms0}, \ref{axioms1}, and \ref{axioms2}. 
	\end{theorem}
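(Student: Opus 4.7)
\medskip

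\noindent \textbf{Proof proposal.}

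The plan is a standard Sard-Smale transversality argument adapted to the cylindrical-end setting. First I would fix an ambient Banach space $\mathcal{E}$ of perturbations constructed as follows. Choose a countable collection $\{(\Sigma_i, U_i, f_i, h_i)\}$ where each $\Sigma_i \subset Z$ is an embedded oriented surface (in the interior of $Z$, on the cylindrical end, or equal to a slice of the end), $U_i \times \Sigma_i$ is a tubular neighborhood, $f_i$ is a compactly supported bump function, and $h_i$ is a gauge-invariant real-valued function on $\A(P|_{\Sigma_i})$ of holonomy type. Each datum determines a perturbation ${\bf K}_i$ as in Example \ref{exampleexamples}(b), and I would take ${\bf K} = \sum_i \eps_i {\bf K}_i$ with coefficients $\eps = (\eps_i) \in \mathcal{E}$, where $\mathcal{E}$ is a weighted $\ell^1$-type Banach space chosen so that the sum converges in every $W^{k,p}$. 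By choosing the family $\{(\Sigma_i, U_i, f_i, h_i)\}$ dense in a suitable $C^\infty$ sense (and, on the end, translation-invariant), we will obtain enough freedom for transversality.

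Second, I would verify that Axioms \ref{axioms0}, \ref{axioms1}, and \ref{axioms2} hold \emph{for every} $\eps \in \mathcal{E}$. Axiom \ref{axioms0} follows by construction since each constituent is a gradient-type perturbation, and the symmetry is preserved under summation. Axiom \ref{axioms2} is a pointwise algebraic identity: because each ${\bf K}_i$ has the form $df_i \wedge X_{A|_{\Sigma_i}}$ with $X$ depending only on the restriction to $\Sigma_i$, the relations (i)--(iv) follow from $df_i \wedge df_i = 0$ and a direct computation of $d{\bf K}_i$; the cross terms in $\langle {\bf K} \wedge {\bf K}\rangle$ vanish when the supports are chosen disjoint, which can always be arranged. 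Axiom \ref{axioms1} reduces to $W^{k,p}$-continuity of holonomy, which is standard and can be imposed via the weights on $\mathcal{E}$.

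Third, the heart of the proof is a Sard-Smale argument producing an $\eps \in \mathcal{E}$ for which ASD-regularity holds. I would introduce three universal moduli spaces:
(A) pairs $(\eps, a) \in \mathcal{E} \times \A(Q)/\G(Q)$ with $F_{a, K_\eps} = 0$;
(B) pairs $(\eps, A)$ with $A \in \A^{1,p}(P; a)/\G^{2,p}(P; e)$ a ${\bf K}$-ASD connection, for each $K$-flat $a$;
(C) pairs $(\eps, B)$ with $B$ a ${\bf K}^Y$-ASD connection on $\bb{R} \times Q$ asymptotic to a given pair $(a^-, a^+)$.
Density of the perturbation class (for holonomy perturbations around loops filling out $Y$, resp.\ surfaces filling out $Z$) implies that the linearized equations are surjective on the \emph{universal} moduli spaces, so each is a smooth Banach manifold thanks to the irreducibility hypothesis. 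The projection to $\mathcal{E}$ is Fredholm, so by Sard-Smale its regular values form a residual subset $\mathcal{E}_A, \mathcal{E}_B, \mathcal{E}_C \subset \mathcal{E}$. Taking a countable intersection over the components of (B) and (C) (indexed by topological type and asymptotic pair of flat connections) yields a residual, hence non-empty, set in $\mathcal{E}$, and any $\eps$ in this intersection gives the desired ${\bf K}$.

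The main obstacle is verifying surjectivity of the universal linearization, which is what makes holonomy perturbations \emph{sufficient} to cut out the cokernel of $d^+_{A, {\bf K}} \oplus d^*_{A, {\bf K}}$. The argument is essentially the one in \cite{Donfloer} and \cite{Kron}: given a non-zero cokernel element $\phi$, a unique-continuation result forces $\phi$ to be non-trivial on an open set where one can find a loop or surface from the dense collection at which the variational derivative of the corresponding holonomy perturbation pairs non-trivially with $\phi$. The delicate point in the cylindrical-end setting is to control the behavior of $\phi$ near infinity; this is where the acyclic hypothesis (achieved on $Y$ by the first application of Sard-Smale to (A)) is essential, since it forces exponential decay of $\phi$ along the end and thereby legitimizes unique continuation and the Fredholm theory used on (B) and (C).
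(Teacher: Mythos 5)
Your overall strategy — holonomy-type perturbations of the form in Example \ref{exampleexamples}(b), organized into a Banach space of coefficients, combined with a Sard-Smale transversality argument over universal moduli spaces — is the same as the paper's (which defers the details to \cite[Prop.~6.11]{DunAFI} and \cite[Sec.~5.5]{Donfloer}). Your explicit decomposition into the three universal moduli problems (A), (B), (C) tracking the three bullets in the definition of ASD-regularity is a clean way to structure what the paper leaves implicit, and your remark about exponential decay and unique continuation is precisely the point that makes the cylindrical-end Fredholm theory work once acyclicity is secured on $Y$.

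However, there is a genuine gap: you never address \emph{irreducibility} of $K$-flat connections. The definition of acyclic requires the $K$-flat connection to be both irreducible and a non-degenerate critical point of $\CS_{K,P}$, and ASD-regularity in turn requires all $K$-flat connections to be acyclic. Sard-Smale produces non-degeneracy (transversality) but cannot produce irreducibility — reducibility is a closed, not codimension-positive, condition and is not a transversality issue. Your line ``each is a smooth Banach manifold thanks to the irreducibility hypothesis'' quietly assumes that $K$-flat connections inherit irreducibility from the flat ones, but this does not follow automatically: after you turn on $\eps$, the critical set of $\CS_{K_\eps, P}$ is a different set of connections. The paper closes this gap by observing that irreducibility is an \emph{open} condition on connections, that the set of $K$-flat connections varies continuously with $K$, and that therefore all $K$-flat connections remain irreducible provided $K$ is small. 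You would therefore need to add, at the end of your argument, a smallness constraint on $\eps$ (or restrict $\mathcal{E}$ to a small ball), and check that the residual set produced by your three Sard-Smale steps still meets that ball — which it does, since residual sets in a Baire space are dense in every open set, but this needs to be said.

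A secondary, more technical point: your treatment of Axiom \ref{axioms2}(iii) for sums ${\bf K} = \sum_i \eps_i {\bf K}_i$ relies on choosing the tubular neighborhoods $U_i \times \Sigma_i$ pairwise disjoint so that the cross terms in $\langle {\bf K} \wedge {\bf K} \rangle$ vanish. That disjointness requirement is in tension with the density needed for the surjectivity of the universal linearization — one generally wants overlapping thickened loops to see arbitrary cokernel elements. This can be reconciled (for instance by working within a single tube $U \times \Sigma$ and summing over loops and over $h$'s inside the fixed $\Sigma$, which is how \cite{Donfloer} sets things up), but you should either impose that restriction from the start or verify that disjointly supported tubes still give enough perturbations to kill the cokernel.
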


	\begin{proof}[Sketch of Proof]	Theorem \ref{existencetheorem} was proved in \cite[Prop. 6.11]{DunAFI}. The idea is to restrict to perturbations having the form described in Example \ref{exampleexamples} (b), since these automatically satisfy \ref{axioms0} and \ref{axioms2}. Next, restrict further to the family ${\mathcal{F}}$ of perturbations where the function $h$ is defined by considering holonomy over thickened loops in the surface $\Sigma$; see \cite[Section 5.5]{Donfloer}. Here we are using the notation of Example \ref{exampleexamples} (b). Then each ${\bf K} \in {\mathcal{F}}$ satisfies \ref{axioms1}; see \cite[Prop. 7]{Kron}. The key point, however, is that this family ${\mathcal{F}}$ is large enough to contain a comeager set of perturbations satisfying all conditions of ASD-regularity (except possibly the irreducibility condition for $K$-flat connections). The existence of this comeager set follows from a Sard-Smale argument that is now fairly standard in gauge theory; we refer the reader to Donaldson's book \cite[Section 5.5]{Donfloer} for a nice general treatment. This ultimately comes down to the idea that connections are distinguished by their holonomy. Finally, to obtain irreducibility of $K$-flat connections, use the fact that irreducibility is an open condition, and so the assumptions on $Q$ imply that all $K$-flat connections will be irreducible provided $K$ is sufficiently small. \end{proof}

The next example shows that bundles $Q$ satisfying the hypotheses of Theorem \ref{existencetheorem} are fairly abundant.  		
		
\begin{example}
(a) Suppose $G = \SO(3)$ and $Y$ has positive first Betti number. Fix any non-torsion class $\gamma \in H_1(Y, \bb{Z}_2)$, and define $Q \rightarrow Y$ to be the principal $\SO(3)$-bundle whose Stiefel-Whitney class $w_2(Q) \in H^2(Y, \bb{Z}_2)$ is Poincar\'{e} dual to $\gamma$. Then all flat connections on $Q$ are irreducible. 

This strategy generalizes to $G = \PU(r)$ for $r \geq 2$.

\medskip

(b) Suppose $Y$ is any 3-manifold. Then taking the connect sum with the torus $Y \# T^3$ produces a 3-manifold with positive first Betti number. In particular, for each $r \geq 2$, the manifold $Y \# T^3$ admits a $\PU(r)$-bundle with no reducible flat connections. This strategy is due to Kronheimer-Mrowka \cite{KM}.
\end{example}

	\section{Short-time existence for the flow}\label{ThePerturbedYang-MillsHeatFlow}

Fix a perturbation ${\bf K}$, and let $K$ be the induced perturbation on $Y$. Suppose $a$ is a $K$-flat connection on $Q$, and consider the perturbed Yang-Mills functional 
$$\YM_{ {\bf K}}: \A^{1,2}(P; a) \cap \A^{k,p}(P; a) \longrightarrow \bb{R}.$$ 
We intersect with $\A^{1,2}(P, a)$ to ensure we obtain a finite Yang-Mills value. The $L^2$-gradient of $\YM_{\bf K}$ is the vector field $A \mapsto d_{A, {\bf K}}^* F_{A, {\bf K}}$. The \emph{(perturbed) Yang-Mills flow} is the negative gradient flow of $\YM_{ {\bf K}}$:
\begin{equation}\label{ymheatflow}
\partial_\tau A = - d_{A , {\bf K}}^* F_{A, {\bf K}} , \indent A(0) = A_0,
\end{equation}
where $A_0 \in \A^{1,2}(P; a) \cap \A^{k',p'}(P; a)$ is some fixed initial condition, and the unknown $A$ is a path in $\A^{1,2}(P; a) \cap \A^{k,p}(P; a)$. Here $k', p'$ are Sobolev constants that, for us, will typically be higher than $k, p$ (though ideally $k = k'$ and $p = p'$). The flow (\ref{ymheatflow}) is invariant under the action of the gauge group $\G^{2,2}(P; e) \cap \G^{k+1,p}(P; e)$, provided $(k+1)p > 4$; see Remark \ref{gaugeactionforlargep}. When ${\bf K} = 0$, the flow (\ref{ymheatflow}) is exactly the flow (\ref{YMNoPert}) from the introduction.

In this section we establish short-time existence of the flow (\ref{ymheatflow}) under mild hypotheses on $A_0$. The statements of the results are given in Section \ref{Short-TimeExistence}, with the proofs being deferred to Section \ref{Proofs}. In the intermediary Section \ref{ComparisonWithClosedCase}, we discuss how the cylindrical end case compares with the more standard case where $Z$ is closed.

\subsection{Statements of the short-time existence results}\label{Short-TimeExistence}

Fix a perturbation ${\bf K}$. Throughout this section, we assume this satisfies \ref{axioms0}, \ref{axioms1}, and \ref{axioms2}. We assume further that $a \in \A(Q)$ is a $K$-flat connection that is acyclic. Fix a smooth reference connection 
$$A_{ref} \in \A^{1,2}(P; a);$$ 
define all Sobolev norms relative to this connection (coupled with the Levi-Civita connection on $Z$).

\begin{theorem}  \label{TheoremShort-TimeExistence}(Short-time existence)
Fix $4 <  p < \infty$, as well as an initial condition $A_0 \in \A^{1,2}(P; a) \cap \A^{2, p}(P; a)$. Then there is some $\tau_1 > 0$, and a unique solution 
\begin{equation}\label{weakregularity}
A \in  \C^0\Big(\left[0, \tau_1 \right), \A^{1,2}(P; a)\Big) \cap \C^1\Big(\left(0, \tau_1 \right) \times Z \Big) \cap \C^0_{loc}\Big( \left(0, \tau_1 \right), \A^{2,2}(P; a) \Big)
\end{equation} 
to the perturbed Yang-Mills flow (\ref{ymheatflow}). Moreover, the curvature has regularity at least
\begin{equation}\label{weakFreg}
F_A \in \C^0\Big( \left[0, \tau_1 \right), L^2(Z) \Big) \cap \C^1\Big( \left(0, \tau_1 \right) \times Z\Big) \cap L^\infty_{loc}\Big( \left(0, \tau_1 \right), W^{2,2}(Z) \Big).
\end{equation}
If $A_0$ is $\C^\infty(Z)$, then the solution $A$ is in $\C^\infty\left( \left[0, \tau_1 \right) \times Z \right)$. 
\end{theorem}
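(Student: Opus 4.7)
The plan is to follow the standard DeTurck trick approach for Yang-Mills flow, adapted to the cylindrical-end setting along the lines outlined in Feehan's monograph \cite{Feehan1}, which synthesizes the methods of Struwe \cite{Struwe} and Kozono-Maeda-Naito \cite{KMN}. Writing $A = A_{ref} + V$, the equation \eqref{ymheatflow} is degenerate parabolic because of gauge invariance, with principal symbol vanishing on the image of $d_{A_{ref}}$. I would therefore consider the gauge-modified flow
\begin{equation*}
\partial_\tau V + d_{A,{\bf K}}^* F_{A, {\bf K}} + d_A d_{A_{ref}}^* V = 0, \qquad V(0) = A_0 - A_{ref},
\end{equation*}
whose linearization at $V=0$ is the full Hodge-type Laplacian $d_{A_{ref}}^* d_{A_{ref}} + d_{A_{ref}} d_{A_{ref}}^*$ (plus lower-order perturbation terms controlled by Axiom \ref{axioms1}), which is strictly elliptic. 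A time-dependent gauge transformation $u(\tau) \in \G^{k+1,p}(P;e)$ solving the ODE $u^{-1}\partial_\tau u = d_{A_{ref}}^* V$ converts solutions of the modified flow into solutions of \eqref{ymheatflow}, and conversely.

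For the linear theory, the acyclic hypothesis on $a$ guarantees that the model operator at infinity, namely the Hodge Laplacian on $\R \times Y$ coupled with the asymptotic data, has trivial kernel; combined with the cylindrical-end structure, $d_{A_{ref}}^*d_{A_{ref}} + d_{A_{ref}}d_{A_{ref}}^*$ is then a Fredholm self-adjoint operator with trivial kernel on the relevant Sobolev spaces on $Z$, and the standard estimates show it generates an analytic semigroup on $L^p(\Omega^1)$ for $p \in (1,\infty)$. Duhamel's principle then reformulates the DeTurck-modified flow as a fixed-point problem for
\begin{equation*}
V(\tau) = e^{-\tau \Delta_{A_{ref}}} V(0) + \int_0^\tau e^{-(\tau - \sigma)\Delta_{A_{ref}}} \mathcal{N}(V(\sigma))\, d\sigma,
\end{equation*}
where $\mathcal{N}(V)$ collects the quadratic-and-higher terms in $V$ together with contributions from ${\bf K}$. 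The key nonlinear estimates for $\mathcal{N}(V)$ on $Z$ reduce to the Sobolev embeddings $W^{1,2}(Z) \hookrightarrow L^4(Z)$ and $W^{2,p}(Z) \hookrightarrow C^0(Z)$ for $p > 2$, both of which hold uniformly on a cylindrical-end 4-manifold of bounded geometry; the perturbation estimates from Axiom \ref{axioms1} provide the analogous bounds for the ${\bf K}$-dependent terms. Applying the Banach fixed-point theorem in a small ball of $C^0([0,\tau_1], W^{2,p}) \cap L^\infty_{loc}((0,\tau_1), W^{3,p})$ then yields a short-time solution with the claimed regularity, and uniqueness follows by applying the linear parabolic energy estimate to the difference of two solutions. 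Standard parabolic bootstrapping upgrades $A$ to $C^\infty$ in space-time when $A_0$ is smooth.

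The main obstacle is verifying that nothing in the closed-case argument genuinely uses compactness of $Z$. Three points require attention. First, the Sobolev embeddings must be global with uniform constants: this is where the cylindrical-end hypothesis (which implies bounded geometry) is used, and is the observation highlighted in the introduction. Second, the semigroup estimates for $e^{-\tau \Delta_{A_{ref}}}$ — in particular the ultracontractive $L^p \to L^q$ bounds that drive the fixed-point argument — must survive on the non-compact $Z$; these follow from Gaussian heat kernel bounds, which hold on manifolds of bounded geometry whose asymptotic operator has a spectral gap (ensured by acyclicity). Third, one must check that the gauge-modified flow remains well-posed when the initial condition is only required to lie in $\A^{1,2} \cap \A^{2,p}$: here the $\A^{2,p}$-regularity with $p > 4$ provides a starting point in $C^1$ and places the nonlinearity firmly in the range where contraction works, while the $\A^{1,2}$-intersection is needed only to ensure that $F_A$ lies in $L^2$ for all time (so that $\YM_{\bf K}$ is finite along the flow). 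Once these ingredients are in place, the rest of the argument is essentially a line-by-line adaptation of \cite[Ch.\ 25]{Feehan1}.
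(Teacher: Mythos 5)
Your overall architecture matches the paper's exactly: apply Donaldson's variant of the DeTurck trick to obtain a parabolic gauge-modified flow, solve it using the linear theory for the reference Laplacian (Fredholm and self-adjoint on $Z$ thanks to the acyclicity of $a$), and convert back to the Yang-Mills flow by integrating an ODE for a time-dependent gauge transformation. Your choice of gauge-fixing term, $d_A d_{A_{ref}}^* V$ rather than the paper's $d_{B,{\bf K}} d_{B,{\bf K}}^*(B-A_{ref})$, is a harmless variant; both render the modified flow parabolic. Your emphasis on the uniform Sobolev embeddings $W^{1,2}(Z) \hookrightarrow L^4(Z)$ and $W^{2,p}(Z) \hookrightarrow C^0(Z)$ for the cylindrical-end setting is exactly the observation the paper makes in Section 3.2, and your appeal to the Duhamel fixed-point argument is what the paper is implicitly citing when it invokes Feehan's Theorems 16.4 and 16.5 for the existence of the DeTurck solution $B$.

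The genuine gap is in the final sentence of your fixed-point paragraph, where you assert that the contraction mapping argument on the DeTurck flow ``yields a short-time solution with the claimed regularity.'' It does not, because the regularity stated in \eqref{weakregularity}--\eqref{weakFreg} is for the Yang-Mills flow $A = (u^{-1})^* B$, not for the DeTurck flow $B$. The DeTurck solution $B$ is smooth for $\tau > 0$ and continuous into $\A^{1,2}\cap\A^{2,p}$ at $\tau = 0$; but the gauge transformation $u(\tau)$ solving $u^{-1}\partial_\tau u = -d_{B,{\bf K}}^*(B-A_{ref})$ only lands in $\C^0([0,\tau_1),\G^{1,p})\cap\C^1((0,\tau_1),\G^{1,p})$ because the inhomogeneous term is not continuous in $W^{k,p}$ at $\tau=0$ (cf.\ Remark \ref{Rem2}(a)). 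Consequently $A$ is strictly less regular than $B$: it is $\C^1$ but in general not $\C^\infty$ for positive time, and this is the reason \eqref{weakregularity} asserts only $\C^1$ spatially. The bulk of the paper's proof is precisely the bootstrapping needed to verify (i) continuity of $A(\tau)$ in $W^{1,2}(Z)$ as $\tau\searrow 0$, via an energy identity for $\tfrac{d}{d\tau}\Vert d_{A,{\bf K}}^* F_{A,{\bf K}}\Vert^2_{L^2}$; (ii) that $F_A\in L^\infty_{loc}((0,\tau_1),W^{2,2}(Z))$, using the Bianchi identity and the gauge-covariance identities $d_{A,{\bf K}}^*F_{A,{\bf K}}=\mathrm{Ad}(u)\,d_{B,{\bf K}}^*F_{B,{\bf K}}$; and (iii) the local $W^{2,2}$ continuity of $A$, again by integrating the flow equation. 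None of this is automatic from the fixed-point argument in the DeTurck gauge, and your proposal would need to include this transfer-of-regularity analysis before it actually proves the theorem as stated.
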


We carry the proof out in Section \ref{Proof1}. Note that the acyclic assumption on $a$ implies that all connections in $\A^{1,2}(P; a)$ are irreducible, and so we do not need to impose irreducibility as an additional hypothesis to obtain uniqueness.

\begin{remark}\label{StruRemark}
(a) The flow (\ref{ymheatflow}) is not parabolic due to its invariance under the action of the gauge group. One consequence of this is that the flow is typically not smoothing. For example, suppose the initial condition $A_0$ is a Yang-Mills connection. Then the constant path $A(\tau) = A_0$ clearly solves (\ref{ymheatflow}), but it only has as much regularity as $A_0$. By applying a gauge transformation with low regularity, it is not hard to construct Yang-Mills connections that are $\A^{1,2}(P; a) \cap \A^{k,p}(P; a)$, but not smooth.

\medskip

(b) A more natural initial condition would be to simply assume $A_0$ is in $\A^{1,2}(P; a)$. However, Struwe \cite{Struwe} pointed out that the gauge equivariance of the flow makes it unlikely that one can expect much more than a weak solution to (\ref{ymheatflow}) if $A_0$ only has $W^{1,2}$-regularity, even in the closed, unperturbed case. That being said, Struwe was able to establish a weak solution for $W^{1,2}$-initial conditions, and he showed that the weak solution is gauge equivalent (in a certain sense) to a strong solution. Moreover, his proof extends, without much difficulty, to our situation with cylindrical ends and a perturbation (we discuss this in more detail in Section \ref{ComparisonWithClosedCase}). That is, in our case as well as Struwe's, we have that whenever $A_0 \in \A^{1,2}(P; a)$, there is a unique path
$$A \in \C^0\left( \left[0, \tau_1 \right], \A^{0,2}(P; a) \right) \cap W^{1,2} \left( \left(0, \tau_1 \right) , \A^{0,2}(P; a) \right)$$
satisfying (\ref{ymheatflow}) weakly. This has the additional property that
$$F_{A, {\bf K}} \in \C^0 \left(  \left[0, \tau_1 \right), L^2(Z) \right),$$
and $A$ is gauge equivalent to a smooth solution in the sense described in \cite[Theorem 1.1(i)]{Sch}. 

\medskip

(c) One can improve on the regularity in (\ref{weakregularity}) and (\ref{weakFreg}) in various ways. For example, we will see in the proof that $d_A^* F_A$ is in $L^\infty_{loc}((0, \tau_1) , W^{2,2}(Z))$. 
\end{remark}

As in the closed case \cite{Struwe,Sch}, the maximal existence time for the flow is determined by concentration of energy. The new feature coming from the non-compactness is that it is conceivable the energy concentrates at points that escape down the cylindrical end.

\begin{proposition}\label{EnergyConcentrationTheorem} (Energy concentration)
Under the hypotheses and notation of Theorem \ref{TheoremShort-TimeExistence}, there is some $\eta_{S^4} > 0$ so that the maximal existence time from Theorem \ref{TheoremShort-TimeExistence} is characterized by
$$\overline{\tau} \defeq \sup \left\{ \tau_1 > 0 \: \left| \: \exists R > 0 , \: \sup_{z \in Z, \; 0 \leq \tau \leq \tau_1}   \intd{B_R(z)} \vert F_{A(\tau), {\bf K}} \vert^2  > \eta_{S^4} \right. \right\}.$$
At $\tau = \overline{\tau}$, the curvature concentrates at at most a finite number of points 
$$\begin{array}{r}
(z_{01}, \ldots, z_{0K_0}; (s_{11}, y_{11}), \ldots, (s_{1K_1}, y_{1K_1}); \ldots; (s_{J1}, y_{J1}), \ldots, (s_{JK_J} , y_{JK_J})) \indent \indent \\
\in Z^{K_0} \times (\bb{R} \times Y)^{K_1} \times  \ldots \times (\bb{R} \times Y)^{K_J}
\end{array}$$
in the following sense: 
\begin{itemize}
\item \emph{(Energy concentration on $Z$)} The points $z_{01}, \ldots, z_{0K_0}$ have the property that 
\begin{equation}\label{mustconcentrate1}
\forall 1 \leq k \leq K_0, \: \forall R > 0 , \hspace{.5cm} \limsup_{\tau \nearrow \overline{\tau}  } \intd{B_R(z_{0k})} \vert F_{A(\tau), {\bf K}} \vert^2  > \eta_{S^4};
\end{equation}
\item \emph{(Energy concentration down the cylindrical ends)} There are a finite number 
$$s_1(\tau), \ldots , s_J(\tau)$$ 
of functions $\left[0, \overline{\tau} \right) \rightarrow \bb{R}$ with the property that
$$0 < s_1(\tau) < s_2(\tau) < \ldots < s_J(\tau), \hspace{1cm} \forall 0 \leq \tau < \overline{\tau}$$
and 
$$\lim_{\tau \nearrow \overline{\tau}} s_j(\tau) - s_{j-1}(\tau) = \infty, \hspace{1cm} \forall 1 \leq j \leq J,$$
where $s_0(\tau) \defeq 0$. Moreover, for each $1 \leq j \leq J$, the points
$$(s_{j1}, y_{j1}), \ldots, (s_{jK_j}, y_{j K_j})  \in \bb{R} \times Y$$ 
are such that
\begin{equation}\label{mustconcentrate2}
\forall 1 \leq k \leq K_j, \: \forall R > 0 , \hspace{.5cm}  \limsup_{\tau \nearrow \overline{\tau}  } \intd{B_R((s_j(\tau ) + s_{jk}, y_{jk}))} \vert F_{A(\tau), {\bf K}} \vert^2  > \eta_{S^4}.
\end{equation}
\end{itemize}
\end{proposition}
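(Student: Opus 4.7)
The overall strategy follows the closed-case approach of Struwe \cite{Struwe} and Schlatter \cite{Sch}, with a bookkeeping modification needed to track concentration points that escape down the cylindrical end. The pivot is a local $\varepsilon$-regularity lemma: there is a constant $\eta_{S^4} > 0$ (taken as the minimum energy of a non-trivial ASD connection on $S^4$) such that if $\sup_{z \in Z, \, \tau \in [\tau_1, \tau_2]} \int_{B_R(z)} |F_{A(\tau), \mathbf{K}}|^2 < \eta_{S^4}$ on some interval $[\tau_1, \tau_2]$ with $\tau_2 - \tau_1 \geq R^2$, then one obtains a uniform $L^\infty$-bound on $F_{A(\tau_2), \mathbf{K}}$ depending only on $R$. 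This is a purely local estimate; its proof in the closed setting (e.g., \cite[Ch. 17]{Feehan1}) transfers verbatim to our situation because the metric on $Z$ is uniform at infinity and, by \ref{axioms1}, $\mathbf{K}$ is uniformly controlled by $F_{A, \mathbf{K}}$ in $L^\infty$.

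Given the $\varepsilon$-regularity, an $L^\infty$-bound on $F_A$ bootstraps via standard parabolic estimates to uniform $W^{k,p}$-bounds for all $k,p$ (using the translational invariance of the geometry and of $\mathbf{K}$ on the end), and then Theorem \ref{TheoremShort-TimeExistence} applied at time $\overline{\tau}$ with $A(\overline{\tau})$ as initial condition extends the flow beyond $\overline{\tau}$. This contradiction yields the characterization of the maximal existence time $\overline{\tau}$ in terms of curvature concentration above $\eta_{S^4}$. Finiteness of the total number of concentration points follows from the fact that $\YM_{\mathbf{K}}$ is a Lyapunov function along the flow, so the energy at any $\tau < \overline{\tau}$ is at most $\YM_{\mathbf{K}}(A_0)$; a covering argument bounds the number of disjoint unit balls on which the curvature has $L^2$-mass $\geq \eta_{S^4}/2$ by $2 \YM_{\mathbf{K}}(A_0)/\eta_{S^4}$.

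The cylindrical-end bookkeeping is the main new feature. Sequences $(z_n, \tau_n)$ with $\tau_n \nearrow \overline{\tau}$ along which curvature concentrates fall into two classes: those with $z_n$ bounded in $Z$, which produce the points $z_{0k}$ satisfying (\ref{mustconcentrate1}), and those escaping down $[0, \infty) \times Y$. For the latter, one groups concentration sequences $(z_n, \tau_n), (z'_n, \tau_n)$ into the same cluster when $d(z_n, z'_n)$ remains bounded as $n \to \infty$, and into different clusters otherwise. Using the translational invariance of $g$ and $\mathbf{K}$ on the end, the finite total energy bounds the number of clusters and the number of points in each cluster. Choosing, for each cluster, a representative $s$-coordinate $s_j(\tau)$ (e.g., the minimum $s$-value over points in that cluster) and reordering so that $s_1(\tau) < \cdots < s_J(\tau)$, the requirement $s_j(\tau) - s_{j-1}(\tau) \to \infty$ encodes exactly the asymptotic separation of distinct clusters. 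After translating by $\tau^*_{s_j(\tau)}$, the remaining relative positions $(s_{jk}, y_{jk})$ are extracted as the limiting concentration points on $\mathbb{R} \times Y$ satisfying (\ref{mustconcentrate2}).

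The main technical obstacle lies in the clustering argument: one must verify that the representative functions $s_j(\tau)$ can be chosen so that the clusters remain distinct as $\tau \nearrow \overline{\tau}$, and that no new cluster appears out of a sequence that escapes to infinity without concentrating. The latter is ruled out by the fact that a sequence $z_n \to \infty$ along which curvature does \emph{not} blow up in $L^2$ cannot affect the characterization of $\overline{\tau}$; and the former follows from the finiteness of the total energy, which caps the number of clusters at each time. The acyclic assumption on $a$ plays a background role by ensuring exponential decay of finite-energy solutions at infinity (in the sense of Section \ref{UhlenbeckCompactness}), which is what allows the translational renormalization on the end to produce well-defined limiting objects.
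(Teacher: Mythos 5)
Your proof takes essentially the same route as the paper: characterize $\overline{\tau}$ via the Struwe--Schlatter $\varepsilon$-regularity lemma (the paper derives this from Lemmas \ref{l2}, \ref{l3} and cites \cite[Lemma 3.6]{Struwe} rather than Feehan directly, but the content is the same), rescale around concentration points, use Uhlenbeck compactness and removal of singularities to produce non-flat Yang--Mills bubbles on $S^4$, bound the number of bubbles by the total energy, and sort escaping concentration sequences into clusters by their relative separation rates, giving the functions $s_j(\tau)$. Your clustering-by-bounded-distance criterion is exactly equivalent to the paper's comparison of the rates at which $s_n, s_n'$ go to infinity.

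One genuine slip: you define $\eta_{S^4}$ as the minimum energy of a non-trivial \emph{ASD} connection on $S^4$, but it must be the infimum of $\Vert F_A \Vert_{L^2(S^4)}^2$ over all non-flat \emph{Yang--Mills} connections on $S^4$-bundles (as the paper states, with positivity coming from Section \ref{APositiveEnergyGap}). The bubbles produced by the flow are a priori only Yang--Mills, not ASD --- the ASD conclusion requires the extra small-energy and index hypotheses of Section \ref{Long-TimeExistence}, which are not in force in Proposition \ref{EnergyConcentrationTheorem}. Taking the ASD infimum could make the threshold too large: a non-self-dual Yang--Mills bubble with smaller energy could form even though the local energy never reaches your $\eta_{S^4}$, breaking the characterization of $\overline{\tau}$. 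You should also note (as the paper does in a remark) that under the rescaling the perturbation ${\bf K}$ scales to zero, which is why the bubble is an honest unperturbed Yang--Mills connection; you mention uniform control of ${\bf K}$ only in the $\varepsilon$-regularity discussion, not in the bubble extraction.
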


In the conclusion of Proposition \ref{EnergyConcentrationTheorem}, we are viewing the translated points
$$ (s_j(\tau ) + s_{jk}, y_{jk}) \in \left[0, \infty \right) \times Y \subset Z$$
as belonging to $Z$. Of course, this assumes $\tau$ is close enough to $\overline{\tau}$ so $s_j(\tau) + s_{jk} \geq 0$ for all $j, k$. See Section \ref{ProofOfEnergyConcentrationTheorem} for a proof of this proposition. The proof will show that the quantity $\eta_{S^4}$ can be taken to be the infimum of $\smash{\Vert F_A \Vert^2_{L^2(S^4)}}$ over all non-flat Yang-Mills connections $A$ on bundles over $\smash{S^4}$; we show in Section \ref{APositiveEnergyGap} that this infimum is positive. 

To set up for a more uniform discussion below, we set
$$z^\tau_{0k} \defeq z_{0k}, \indent \mathrm{and} \indent z^\tau_{jk} \defeq ( s_j(\tau) + s_{jk}, y_{jk}), \indent j > 0.$$
We will refer to the points $z_{jk}^\tau$ (for any $j,k$) as the \emph{bubbling points}.

By rescaling around each bubbling point, one can show that a Yang-Mills bubble on $S^4$ forms as $\tau$ approaches the maximal flow time $\overline{\tau}$. 

\begin{proposition}\label{BubbleFormation} (Bubble formation)
At each bubbling point, a non-flat Yang-Mills connection on a bundle over $S^4$ separates, in the following sense: 

In the notation of Proposition \ref{EnergyConcentrationTheorem}, fix $0 \leq j \leq J$ and $1 \leq k \leq K_0$, as well as sequences $\tau_n \nearrow \overline{\tau}$, and $R_n \searrow 0$. Let $d$ be the trivial connection on $\smash{B_{R_1}(z^\tau_{jk})}$ relative to some fixed trivialization of the bundle over $\smash{B_{R_1}(z^\tau_{jk})}$. Write 
 $$A(\tau) \vert_{B_{R_1}(z^\tau_{jk})} = d + M(\tau)$$ 
 for some Lie algebra-valued 1-form $M$. Define a connection on $B_{R_n}(0) \subset \bb{R}^4$ by
 $$A_n(x)  \defeq d + R_n M(\tau_n; z^\tau_{jk} + R_n x )  .$$
Then the $A_n$ converge, modulo gauge and in $W^{1,p}_{loc}(\bb{R}^4)$, to non-flat Yang-Mills connection on $\bb{R}^4$ with finite energy. This Yang-Mills connection extends to a unique smooth non-flat Yang-Mills connection $A_{S^4}$ on some bundle over $S^4$.
\end{proposition}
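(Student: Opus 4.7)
The plan is to adapt the standard bubbling extraction for the Yang-Mills flow to the present cylindrical-end, perturbed setting. Rather than taking $\tau_n$ and $R_n$ arbitrary, I would first refine them so that $R_n$ is the \emph{concentration scale} at $(\tau_n, z^{\tau_n}_{jk})$: choose $R_n$ maximal subject to $\int_{B_{R_n}(z^{\tau_n}_{jk})} |F_{A(\tau_n), {\bf K}}|^2 \leq \varepsilon_0$ for a fixed threshold $0 < \varepsilon_0 < \eta_{S^4}$. The conformal invariance of $\int |F|^2$ in four dimensions then guarantees that, after rescaling, $F_{A_n}$ carries exactly $\varepsilon_0$ mass in $B_1(0) \subset {\bb R}^4$ while staying in the $\varepsilon$-regularity regime on every unit ball. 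This ensures the eventual limit is non-flat.

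Next, I would produce a weak limit. The pointwise curvature control established in the proof of Proposition \ref{EnergyConcentrationTheorem} combined with the uniform bound $\Vert F_{A(\tau), {\bf K}} \Vert_{L^2(Z)}^2 \leq 2\YM_{\bf K}(A_0)$, inherited from the monotonicity of $\YM_{\bf K}$ along the flow, gives a uniform $L^\infty \cap L^2$ control on $F_{A_n}$ on compact subsets of ${\bb R}^4$ away from $0$. Uhlenbeck's weak compactness theorem (Remark \ref{UhlRemark}(a)) then supplies gauge transformations $u_n$ and a subsequence for which $u_n^* A_n$ converges weakly in $W^{1,p}_{loc}({\bb R}^4)$ to some $A_\infty$.

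The main obstacle is showing that $A_\infty$ is Yang-Mills, since each $A_n$ only satisfies the flow equation at the single time slice $\tau_n$. Under the spatial rescaling,
$$d^*_{A_n} F_{A_n}(x) = -R_n^3 \,\partial_\tau A(\tau_n; z^{\tau_n}_{jk} + R_n x) + R_n^3 \cdot (\text{${\bf K}$-terms at }z^{\tau_n}_{jk} + R_n x).$$
The ${\bf K}$-terms vanish on compact sets in the limit by \ref{axioms1} with $k=0$, which gives uniform $L^p$-control of ${\bf K}_A$ and $d{\bf K}_A$, so the $R_n^3$ prefactor kills them. For the first term I would apply Struwe's time-averaging trick: the total energy dissipation $\int_0^{\overline{\tau}} \Vert \partial_\tau A \Vert_{L^2(Z)}^2 \, d\tau = \YM_{\bf K}(A_0) - \YM_{\bf K}(A(\overline{\tau}))$ is finite, so by pigeonholing on the intervals $[\tau_n - R_n^2, \tau_n]$ I can replace $\tau_n$ by a nearby $\tau_n'$ on which $R_n^2 \Vert \partial_\tau A(\tau_n') \Vert_{L^2(Z)}$ is arbitrarily small. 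Passing to the limit then yields $d^*_{A_\infty} F_{A_\infty} = 0$ weakly, and elliptic regularity (in a Coulomb gauge obtained via Uhlenbeck) promotes $A_\infty$ to a smooth Yang-Mills connection.

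Finally, finite energy of $A_\infty$ is immediate from Fatou applied to the $L^2$-bound on $F_{A_n}$, and non-flatness follows because $A_\infty$ retains the $\varepsilon_0$ mass in $B_1(0)$. Uhlenbeck's removable singularity theorem then extends $A_\infty$ smoothly across the point at infinity in $S^4$, producing the desired Yang-Mills connection $A_{S^4}$ on some principal $G$-bundle over $S^4$. The principal analytic difficulty is the time-averaging step that converts a nearly-Yang-Mills slice into a genuine Yang-Mills limit; by contrast, the cylindrical-end geometry and the presence of ${\bf K}$ enter only through the uniform bounds of \ref{axioms1} and so introduce no essential new difficulty beyond the closed, unperturbed case treated in \cite{Struwe,Sch,Schglobal}.
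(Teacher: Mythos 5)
Your proposal is correct and follows essentially the same rescaling-and-Uhlenbeck-compactness route that the paper uses (and which it mostly defers to Schlatter). The two genuine additions you make are worth noting: first, you spell out the time-averaging/pigeonhole step that converts the single-time-slice flow equation into the assertion that the limit is stationary, which the paper compresses into the phrase ``the flow equation rescales in such a way to imply that $A_\infty$ is Yang-Mills''; second, you observe that the proposition's $R_n$ must be refined to the concentration scale (so that the rescaled curvature carries a fixed $\varepsilon_0$ of mass), since for arbitrary $R_n \searrow 0$ the rescaled limit could be flat. The paper's stated hypotheses allow arbitrary $R_n$ and its proof does not address this point, so your refinement is a genuine correction rather than a cosmetic change. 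One minor slip: pigeonholing over $[\tau_n - R_n^2, \tau_n]$ gives $R_n^2 \Vert \partial_\tau A(\tau_n')\Vert_{L^2(Z)}^2 \to 0$, i.e.\ $R_n \Vert \partial_\tau A(\tau_n')\Vert_{L^2(Z)} \to 0$, not $R_n^2 \Vert \partial_\tau A(\tau_n')\Vert_{L^2(Z)} \to 0$ as written; this is exactly what is needed once one checks that the $L^2(B_R(0))$-norm of the rescaled term $R_n^3\,\partial_\tau A(\tau_n'; z + R_n\cdot)$ scales to $R_n\Vert\partial_\tau A(\tau_n')\Vert_{L^2}$, so the argument goes through.
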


The proof is given in Section \ref{ProofOfEnergyConcentrationTheorem}. In the statement of the above proposition, we are implicitly assuming that $R_1 > 0$ is small enough so that the ball $\smash{B_{R_1}(z^\tau_{jk}) \subset Z}$ is contractible. Note also that the trivialization of the bundle over $\smash{B_{R_1}(z^\tau_{jk})}$ is independent of $\tau$. This is obvious when $j = 0$ since $z^\tau_{0k} = z_{0k}$. When $j > 0$ this follows because the bundle is translationally-invariant on the cylindrical end.

Even in the presence of bubbling, the connections $A(\tau)$ converge in a rather weak sense on the complement of the bubbling set on $Z$. When bubbles form in finite time, we obtain a statement familiar from the closed setting \cite[Theorem 1.3]{Sch} in the sense that we have $L^2$-convergence on the full 4-manifold.

\begin{proposition}\label{ConvergenceForFiniteTimeBubbling} (Convergence with finite-time bubbling)
Let $A$ be as in the statement of Proposition \ref{BubbleFormation}. Assume the maximal existence time $\overline{\tau} < \infty$ is finite. Then there is a finite-energy connection 
$$A_1 \in \A^{0,2}\Big(P; a\Big) \cap \A^{1,2}_{loc}\Big( P \vert_{Z \backslash \left\{z_{01}, \ldots, z_{0K_0} \right\} }\Big)$$   
with the property that, as $\tau$ increases to $\overline{\tau}$, the connections $A(\tau)$ converge to $A_1$ in $L^2\left( Z \right) \cap W^{1,2}_{loc}\left( Z \backslash \left\{z_{01}, \ldots, z_{0 K_0} \right\} \right)$. Moreover,
\begin{equation}\label{energyinequaltiy}
\YM_{\bf K}(A_1) + \eta_{S^4} \sumdd{j = 0}{J}  \sumdd{k = 0}{K_j} n_{jk}  \leq \liminf_{\tau \nearrow \overline{\tau}} \YM_{\bf K}(A(\tau)),
\end{equation}
for some positive integers $n_{jk}$, where $\eta_{S^4}$ is as in Proposition \ref{EnergyConcentrationTheorem}.
\end{proposition}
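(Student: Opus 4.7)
The plan is to combine three ingredients: the gradient-flow energy identity (for $L^2(Z)$-convergence on all of $Z$), parabolic regularity for the Yang-Mills flow away from the concentration points (for $W^{1,2}_{loc}$-convergence), and the bubble rescaling from Proposition \ref{BubbleFormation} (for the energy inequality).

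First I would establish $L^2(Z)$-convergence to define $A_1$. The gradient-flow structure of (\ref{ymheatflow}) gives, for any $0 < \tau < \overline{\tau}$,
$$\intdd{0}{\tau} \Vert \partial_\sigma A \Vert_{L^2(Z)}^2 \, d\sigma \;=\; \YM_{\bf K}(A_0) - \YM_{\bf K}(A(\tau)) \;\leq\; \YM_{\bf K}(A_0),$$
using (\ref{ymheatflow}) to identify $\partial_\tau A$ with $-d_{A,{\bf K}}^* F_{A,{\bf K}}$. Because $\overline{\tau} < \infty$, Cauchy--Schwarz in the $\tau$-variable yields
$$\Vert A(\tau_1) - A(\tau_2) \Vert_{L^2(Z)} \;\leq\; \sqrt{|\tau_2 - \tau_1|} \left( \intdd{\tau_1}{\tau_2} \Vert \partial_\sigma A \Vert_{L^2(Z)}^2 \, d\sigma \right)^{1/2},$$
so $\tau \mapsto A(\tau)$ is Cauchy in $L^2(Z)$ as $\tau \nearrow \overline{\tau}$. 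Its limit $A_1 \in \A^{0,2}(P;a)$ has finite Yang-Mills energy by Fatou.

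Next, I would establish $W^{1,2}_{loc}$-convergence on $Z \setminus \{z_{01}, \ldots, z_{0K_0}\}$. Fix a compact set $C$ in this complement. Since $s_j(\tau) \to \infty$, the cylindrical-end bubble points $z^\tau_{jk}$ with $j > 0$ exit $C$ for $\tau$ sufficiently close to $\overline{\tau}$; together with the characterization of $\overline{\tau}$ and the definition of the $z_{0k}$ in Proposition \ref{EnergyConcentrationTheorem}, this gives a uniform bound on $\Vert F_{A(\tau), {\bf K}} \Vert_{L^\infty(C)}$. Standard local interior parabolic estimates for the Yang-Mills flow, as in \cite{Sch, Feehan1} (which carry over to the cylindrical-end setting since they are purely local; see the discussion in Section \ref{ComparisonWithClosedCase}), then give uniform $W^{k,p}$-bounds on $A(\tau)$ over slightly shrunken compact subsets of $C$. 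Any subsequential weak $W^{1,2}$-limit must agree with $A_1$ by uniqueness of the $L^2$-limit, so the full family $A(\tau)$ converges strongly in $W^{1,2}_{loc}(Z \setminus \{z_{0k}\})$ to $A_1$.

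Finally, for the energy inequality, one combines Step 2 with the bubble analysis. Outside an arbitrarily small neighborhood $U$ of the bubbling set (in $Z$ and down the ends), Step 2 yields
$$\YM_{\bf K}\bigl(A_1\vert_{Z \setminus U}\bigr) \;\leq\; \liminf_{\tau \nearrow \overline{\tau}} \YM_{\bf K}\bigl(A(\tau) \vert_{Z \setminus U}\bigr).$$
At each bubbling point $z^\tau_{jk}$, Proposition \ref{BubbleFormation} extracts a non-flat Yang-Mills connection on $S^4$, contributing energy at least $\eta_{S^4}$; iterating the rescaling at finer scales (a standard bubble-tree argument) produces $n_{jk} \geq 1$ distinct bubbles, each contributing at least $\eta_{S^4}$ by the positive energy gap for Yang-Mills connections on $S^4$. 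Shrinking $U$ and summing over bubble points yields (\ref{energyinequaltiy}).

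The main technical obstacle is Step 2: upgrading the $L^\infty$-curvature bound off the bubbling set and the $L^2(Z)$-convergence to strong $W^{1,2}_{loc}$-convergence of the particular path $A(\tau)$, without invoking an external gauge-fixing that would destroy the identification with the flow. The point is that the flow itself selects a gauge representative at each time, and one must argue within this representative; the local parabolic estimates of \cite{Sch, Feehan1} are well-suited to this, and the only additional care needed is to ensure that the escaping concentration points $z^\tau_{jk}$ with $j > 0$ are accounted for correctly. A subsidiary subtlety is the iterative rescaling at a single bubbling point, which is standard in the closed setting and transfers to the cylindrical-end setting once the bubble points have been localized away from infinity.
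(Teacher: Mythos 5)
Your proposal follows essentially the same route as the paper's proof: the $L^2(Z)$-Cauchy estimate comes from the gradient-flow energy identity plus H\"{o}lder (Cauchy--Schwarz) in the time variable, the $W^{1,2}_{loc}$-convergence off the bubble set is handled by Schlatter's local parabolic argument (which, being local, transfers to the cylindrical-end case), and the energy inequality is obtained by decomposing $Z$ into a region away from shrinking balls around the bubbling points (where $W^{1,2}_{loc}$-convergence and lower semicontinuity apply) and the balls themselves (where Proposition \ref{BubbleFormation} yields at least $\eta_{S^4}$ per bubble). The only cosmetic difference is that the paper phrases the dissipation estimate through $\|F_{A,{\bf K}}^+\|^2_{L^2}$ and its non-increase, rather than directly through $\YM_{\bf K}$, but these differ by a constant via (\ref{ympertmotheruck}) so the argument is the same.
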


We prove Proposition \ref{ConvergenceForFiniteTimeBubbling} in Section \ref{ProofOfConvergenceWithFinite-TimeBubbling}. Note that the convergence of $A(\tau)$ is on the complement of the bubbling points $z_{0k}$ (as opposed to being on the complement of all bubbling points $z_{jk}$ for $j > 0$). This is because, for any fixed compact set $C \subset Z$, the remaining bubbling points $z_{jk}^\tau$, with $j > 0$, all exit $C$ when $\tau$ is sufficiently close to $\overline{\tau}$. On the other hand, the energy inequality (\ref{energyinequaltiy}) remembers all bubbling points. The interpretation of the integers $n_{jk}$ is that the quantity $\eta_{S^4} n_{jk}$ is (a lower bound for) the energy of the Yang-Mills bubble forming at $z^\tau_{jk}$.

\begin{remark}\label{ExtensionToZ}
It follows from Uhlenbeck's theorem on removal of singularities \cite[Theorem 2.1]{U2} that the limiting connection $A_1$ from Proposition \ref{ConvergenceForFiniteTimeBubbling} extends over the bubbling points $z_{0k}$ by possibly modifying the underlying bundle. The finite-energy and $L^2(Z)$-convergence then imply that $A_1$ is gauge equivalent to a connection in $\A^{1,2}(P_1; u_1^* a)$ for some principal $G$-bundle $P_1 \rightarrow Z$ and gauge transformation $u_1$. More precisely, the bundle $P_1 \rightarrow Z$ is such that there is a bundle isomorphism 
$$U_1: P_1 \vert_{Z \backslash \left\{z_{01}, \ldots, z_{0K_0} \right\}} \stackrel{\cong}{\longrightarrow } P \vert_{Z \backslash \left\{z_{01}, \ldots, z_{0K_0} \right\}},$$
and $P_1$ is cylindrical on the end in the sense that
$$P_1 \vert_{\left[s_1, \infty \right) \times Y} \cong \left[s_1, \infty \right) \times Q.$$
 This is the same bundle $Q$ that is associated with $P$, and $s_1 \geq 0$ is large enough so that $\left[s_1, \infty \right) \times Y$ does not contain any of the $z_{0k}$. The pullback connection $U_1^*A_1$ is asymptotic to $u_1^*a$, for some gauge transformation $u_1$ on $Q$. Moreover, this pullback connection extends uniquely to an element of $\A^{1,2}(P_1; u_1^*a)$.

Intuitively, the gauge transformation $u_1$ on $Q$ captures the bubbles that have escaped down the cylindrical end; i.e., those bubbles associated to $\smash{z^\tau_{jk}}$ for $j > 0$. Similarly, the map $U_1$ reflects the bubbles associated to the $z_{0k}$. 
\end{remark}

\subsection{Comparison with the closed case}\label{ComparisonWithClosedCase}

Here we compare our set-up to the case where the base manifold is closed and there are no perturbations. For concreteness, we focus on Struwe's paper \cite{Struwe} as a representative of this latter case. The point we want to emphasize is that Struwe's argument holds \emph{almost verbatim} by simply replacing every $ D = d_{\afA}$ with $d_{\afA, {\bf K}}$, and every $F = F_{\afA}$ with $F_{\afA, {\bf K}}$ ($D$ and $F$ are Struwe's notation). To qualify the term `almost', we begin by addressing the concerns one may have in passing from the closed case to the case with cylindrical ends, as well as how to deal with these concerns. We then move on to address the perturbations.

\medskip

\noindent \emph{Passage to cylindrical end manifolds}

\medskip

In general, when working on cylindrical end manifolds, one needs to watch out for the following.

\begin{itemize}
\item Sobolev embeddings are no longer compact. For example, in dimension 4, there is an embedding $W^{1,2} \hookrightarrow L^2$ in the sense that the one norm bounds the other, however this is not a compact embedding when the base manifold $Z$ is not compact. Fortunately for us, Struwe only uses the compactness of such embeddings for compact intervals of the time variable (e.g., \cite[Proposition 5.2]{Struwe}). 

We note also that there are various strategies for handling compactness-type results on non-compact domains. For example, fix a countable sequence of compact subsets $C_n \subset Z$ that increase and exhaust $Z$. Then one can appeal to compact embeddings on each $C_n$ and then pass to a diagonal subsequence; see \cite[Lemma 4.4.6]{DK}. However, as we just mentioned, such strategies are not necessary here.

\item In the non-compact case the space $L^p$ does not include into $L^q$ when $q < p$ (e.g., $L^\infty$ contains the constant functions which are not in $L^q$ for any $q < \infty$). A related issue is that the Sobolev embedding $W^{k+1,p} \hookrightarrow W^{k, q}$ only holds at the critical level $1 - 4/p =  - 4/q$, but not generally for $1 - 4/p > - 4/q$. (Of course, all of these embeddings do hold on compact manifolds.) Once again, the situation is fortunate for us since Struwe only uses the critical level embeddings; in particular he uses $W^{1,2} \hookrightarrow L^4$, which is fine for us.
\item Fredholm theory on cylindrical end manifolds is a little trickier than for closed manifolds. For example, one would want to know that the elliptic operator 
$$\Delta_{\afA, {\bf K}} = d_{\afA, {\bf K}} d_{\afA, {\bf K}}^* + d_{\afA, {\bf K}}^* d_{\afA, {\bf K}} $$ 
is Fredholm. This is standard in the closed case, but in the presence of cylindrical ends one needs this operator to have `good behavior' along the ends. For us, this `good behavior' condition is satisfied since the $K$-flat connection $\afa$ is assumed to be \emph{acyclic}. See \cite[Section 3]{Donfloer} for a general discussion.

As in the closed case, we also have that $\Delta_{\afA, {\bf K}}$ is self-adjoint; that is, integration by parts holds for $d_{\afA, {\bf K}}$. This is because of \ref{axioms0}, together with the fact that we have restricted to the space $\A^{1,2}(P; \afa)$ so the domain of $d_{\afA, {\bf K}}$ consists of forms on $P$ that vanish at infinity. A nice corollary is that, as in the closed case, the equation $(\partial_\tau + \Delta_{\afA, {\bf K}} ) V = 0$, $V(0) = V_0$ has a unique solution $V$ for each initial condition $V_0 \in W^{1,2}$. This is used to solve for the background connection in \cite[Section 4.1]{Struwe}.
\end{itemize}

Consider, for the moment, the case when ${\bf K} = 0$. Then with the above bulleted comments in mind, Struwe's proof of short-time existence carries over to the cylindrical end setting with no essential change. It therefore remains to discuss how to adapt his proof to accommodate perturbations.

\medskip

\noindent \emph{Handling perturbations}

\medskip

Allowing for non-zero perturbations takes a little more work. There are two potential issues here. One is algebraic, and the other is analytic. To understand the first of these, note that the perturbed objects $d_{A, {\bf K}}$ and $F_{A, {\bf K}}$ are \emph{not} covariant derivatives and curvatures when ${\bf K}$ is non-zero. Nevertheless, by \ref{axioms2}, these behave algebraically like covariant derivatives and curvatures in the sense that they satisfy the conclusions of Corollary \ref{goodcor}. That is, essentially all of the algebra from Struwe's proof carries over to our perturbed setting with only minor changes in notation (here `algebra' means anything with an equal sign). 

Now we discuss how to adapt Struwe's analysis (anything with an inequality). To obtain Struwe's weak solution, at various stages we need a $W^{1,2}$-bound on ${\bf K}_A$. This is furnished by \ref{axioms1}, which implies that the $W^{1,2}$-norm is bounded by the energy of $A$. The energy is decreasing along the flow, so this bound can be taken to be independent of the flow parameter. 

In general, a good rule of thumb for accommodating for perturbations is that any argument that holds for a Riemannian curvature terms (e.g., $\mathrm{Rm} \# W$), will also hold for perturbation terms (e.g., ${\bf K}_\afA \# W$).

\bigskip

To illustrate how all of this can be done, we revisit Struwe's Lemmas 3.1-3.3, which provide the crucial estimates used throughout his (and effectively our) proof of short-time existence and uniqueness. To state the analogous results for us, we assume $a$ is an acyclic $K$-flat connection, and $A$ is a connection in $\A^{1,2}(P; a)$ (or $A$ a path in $\A^{1,2}(P; a)$, depending on context). We set
$$\Delta_{A, {\bf K}} \defeq d_{A, {\bf K}} d_{A, {\bf K}}^* + d_{A, {\bf K}}^* d_{A, {\bf K}},$$
and we use $\nabla_A$ to denote the full covariant derivative associated to $A$ and the Levi-Civita connection. For simplicity, we assume $A$ is smooth.

\begin{lemma} \label{l1} \cite[Lemma 3.1]{Struwe}
There is a constant $C_A$ so that
$$\Vert W \Vert_{W^{2,2}(Z)}^2 \leq C_A \left( \Vert \Delta_{A, {\bf K}} W \Vert^2_{L^2(Z)} + \Vert W \Vert_{L^2(Z)} \right)$$
for all smooth forms $W\in \Omega^i(Z, P(\frak{g}))$ with compact support. The constant $C_A$ depends on $A$ only through the value of $\Vert A - A_{ref} \Vert_{\C^1(Z)}$.
\end{lemma}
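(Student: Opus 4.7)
The plan is to compare $\Delta_{A, {\bf K}}$ with the reference Laplacian $\Delta_{A_{ref}} \defeq d_{A_{ref}} d_{A_{ref}}^* + d_{A_{ref}}^* d_{A_{ref}}$ and treat the difference as a lower-order perturbation controlled by $\Vert A - A_{ref}\Vert_{\C^1(Z)}$ together with the bounds from \ref{axioms1}. Throughout, the compact-support hypothesis on $W$ legitimises all integrations by parts, so there are no boundary issues at infinity to worry about.

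First, I would establish the base estimate for $A_{ref}$: there is a constant $C_0$ so that
$$\Vert W \Vert_{W^{2,2}(Z)}^2 \leq C_0 \left( \Vert \Delta_{A_{ref}} W \Vert_{L^2(Z)}^2 + \Vert W \Vert_{L^2(Z)}^2 \right)$$
for all compactly supported smooth $W \in \Omega^i(Z, P(\frak{g}))$. On the compact piece $Z_0$ this is the standard G\aa rding inequality for the Hodge-type Laplacian coupled to a smooth connection, and on the cylindrical end $[0,\infty) \times Y$ the metric and $A_{ref}$ are (by construction, up to exponentially decaying tail) translationally invariant; a partition-of-unity argument together with the elliptic estimate on $[n, n+2] \times Y$ (which is uniform in $n$) yields the claim with a uniform $C_0$.

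Second, I would decompose $\Delta_{A, {\bf K}} - \Delta_{A_{ref}}$. Setting $V \defeq A - A_{ref} \in \Omega^1$, one has $d_A = d_{A_{ref}} + [V \wedge \cdot]$ and $d_{A,{\bf K}} = d_A - d{\bf K}_A$. Multiplying out gives
$$\Delta_{A, {\bf K}} W - \Delta_{A_{ref}} W = L_1(W) + L_0(W),$$
where $L_1$ is a genuine first-order operator whose coefficients are bounded pointwise by $\Vert V\Vert_{\C^1}$ together with terms of the form $d{\bf K}_A(\nabla_{A_{ref}} W)$ and its formal adjoint, while $L_0$ is of order zero with coefficients involving $V$, $\nabla V$, ${\bf K}_A$, and their algebraic combinations. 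By \ref{axioms1} (at $\ell=1$, $k=0$, $p=2$), each $d{\bf K}_A$-contribution is bounded in $L^2$ by a fixed constant $C_{\bf K}$ times the $W^{1,2}$-norm of its argument, with a constant independent of $A$; by the $\C^1$-control on $V$, the remaining contributions are similarly dominated by $\Vert V\Vert_{\C^1}\,\Vert W\Vert_{W^{1,2}}$.

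Third, I would combine the two steps to get
$$\Vert \Delta_{A_{ref}} W \Vert_{L^2}^2 \leq 2\Vert \Delta_{A,{\bf K}} W \Vert_{L^2}^2 + C'\Big( 1 + \Vert V\Vert_{\C^1}^2 \Big) \Vert W \Vert_{W^{1,2}}^2,$$
insert this into the base estimate, and then apply the interpolation inequality $\Vert W \Vert_{W^{1,2}}^2 \leq \eps \Vert W \Vert_{W^{2,2}}^2 + C(\eps) \Vert W \Vert_{L^2}^2$ with $\eps$ chosen small enough (depending only on $\Vert V\Vert_{\C^1}$) to absorb the resulting $W^{2,2}$-term into the left-hand side. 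The final constant $C_A$ then depends on $A$ only through $\Vert V\Vert_{\C^1}$, as required.

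The main obstacle is the nonlocal character of ${\bf K}$: because $d{\bf K}_A$ is not a differential operator, it cannot be handled by pointwise symbol calculus as in the unperturbed closed case. The estimate nevertheless goes through precisely because \ref{axioms1} provides Sobolev-operator bounds on $d{\bf K}_A$ in every range we need, with constants independent of $A$ for a fixed ${\bf K}$. A minor secondary issue is obtaining the reference estimate uniformly on the noncompact $Z$, which the cylindrical-end structure of $g$ and $A_{ref}$ resolves by translation invariance at infinity.
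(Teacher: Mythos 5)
Your proof is correct, but it takes a genuinely different route from the paper. The paper's proof follows Struwe by proving a perturbed Weitzenb\"{o}ck formula relating $\Delta_{A, {\bf K}}$ to the rough Laplacian $\nabla_A^* \nabla_A$ (where $\nabla_A$ couples $A$ to the Levi-Civita connection), namely
$$\nabla_A^* \nabla_A W = \Delta_{A,{\bf K}} W + F_{A,{\bf K}}\# W + \mathrm{Rm}\#W + \big(\textrm{terms involving } d{\bf K}_A, {\bf K}_A\big),$$
and then combining a G\aa rding estimate for $\nabla_A^*\nabla_A$ with bounds on the lower-order curvature, Riemannian, and perturbation terms (the latter via \ref{axioms1}). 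You instead freeze coefficients at the reference connection: you establish G\aa rding for the unperturbed Hodge Laplacian $\Delta_{A_{ref}}$ (using translation invariance at infinity to get a uniform constant on the cylindrical end), write $\Delta_{A,{\bf K}} - \Delta_{A_{ref}}$ as a bounded map $W^{1,2}\to L^2$ with operator norm controlled by $\Vert A - A_{ref}\Vert_{\C^1}$ and the \ref{axioms1} constants, and absorb via interpolation. Both routes are valid and both reduce to the same ingredients (an a priori elliptic estimate for a fixed reference operator, plus Sobolev-level control of the deviation). Your route sidesteps the algebra of verifying that the perturbation terms cancel correctly to give the Weitzenb\"{o}ck identity, at the mild cost of having to establish G\aa rding for $\Delta_{A_{ref}}$ uniformly over the noncompact $Z$ (which, as you note, the cylindrical structure handles). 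The paper's route via $\nabla_A^*\nabla_A$ is closer to Struwe's original argument and makes the curvature and Riemannian terms appear explicitly. One small imprecision you should flag in a final write-up: the terms of the form $d_{A_{ref}}(d{\bf K}_A^* W)$ require \ref{axioms1} at the level $k=1$, which brings in a dependence on $\Vert F_{A,{\bf K}}\Vert_{L^2}$ in addition to $\Vert A - A_{ref}\Vert_{\C^1}$; this is implicit in the paper's argument as well and is benign in the intended applications (where energy is monotone along the flow), but it is worth making explicit.
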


\begin{proof}
This is a standard Weitzenb\"{o}ck formula computation. In the perturbed setting, the relevant Weitzenb\"{o}ck formula is

$$\begin{array}{rcl}
\nabla_\afA^* \nabla_\afA W & =& \Delta_{\afA, {\bf K}} W + F_{\afA, {\bf K}} \# W + \mathrm{Rm} \# W\\
&& + d{\bf K}_\afA(d_{\afA, {\bf K}}^* W) + d{\bf K}_\afA^*(d_{\afA, {\bf K}} W)  + d_{\afA, {\bf K}} (d{\bf K}_\afA^* W) + d_{\afA, {\bf K}} ^*(d{\bf K}_\afA W)\\
&&  + {\bf K}_\afA \# W + d{\bf K}_A(d{\bf K}_\afA^* W) + d{\bf K}_\afA^* (d{\bf K}_\afA W),
\end{array}$$
where $W$ is any smooth form on $Z$ with compact support. The verification of this formula is simple: Just expand the perturbation terms on the right and note that they all cancel to yield the usual \emph{unperturbed} Weitzenb\"{o}ck formula. Now Lemma \ref{l1} follows by taking the $L^2$-norm of the above, and estimating the lower order terms (e.g., use \ref{axioms1} for the perturbation terms). Since the Sobolev norms are defined relative to the reference connection $A_{ref}$, one should also use the estimate
$$\Vert W \Vert_{W^{2,2}(Z)} \leq C(A) \left( \Vert W \Vert_{L^2(Z)} + \Vert \nabla^*_A \nabla_A W\Vert_{L^2(Z)} \right),$$
where the constant depends on $A$ through the norm $\Vert A - A_{ref} \Vert_{\C^1(Z)}$. 
\end{proof}

\begin{lemma}\label{l2} \cite[Lemma 3.3]{Struwe}
There are constants $C, \delta > 0$, independent of $A$, with the following significance. Suppose $R > 0$ is such that $A$ satisfies
$$\sup_{x \in Z} \intd{B_R(x)} \: \vert F_A \vert^2 \leq \delta.$$
Then
$$\Vert W \Vert_{L^4(Z)}^2 + \Vert \nabla_A W \Vert_{L^2(Z)}^2 \leq C \left( \Vert d_{A, {\bf K}} W \Vert^2_{L^2(Z)} +\Vert d^*_{A, {\bf K}} W \Vert^2_{L^2(Z)}  + R^{-2} \Vert W \Vert^2_{L^2(Z)} \right)$$
for all smooth forms $W \in \Omega^i(Z, P(\frak{g}))$ with compact support.
\end{lemma}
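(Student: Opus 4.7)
The plan is to run the standard Weitzenb\"{o}ck plus cutoff argument used by Struwe \cite{Struwe} in the closed case, adapted to the cylindrical end and perturbed setting. The starting point is the perturbed Weitzenb\"{o}ck formula written down in the proof of Lemma \ref{l1}, which schematically reads
$$\nabla_A^*\nabla_A W \;=\; \Delta_{A,{\bf K}}W + F_A\#W + {\bf K}_A \#W + \mathrm{Rm}\# W + E_{\bf K}(W),$$
where $E_{\bf K}(W)$ collects the cross terms of the form $d{\bf K}_A(d_{A,{\bf K}}^*W)$, $d_{A,{\bf K}}(d{\bf K}_A^*W)$, etc. I would pair this with $W$ and integrate over $Z$. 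For compactly supported $W$, the restriction of the domain to $\A^{1,2}(P;a)$ makes integration by parts legal (using \ref{axioms0} as in Lemma \ref{l1}), so the left side becomes $\|\nabla_A W\|_{L^2}^2$ and the $\Delta_{A,{\bf K}}$-term becomes $\|d_{A,{\bf K}}W\|_{L^2}^2+\|d_{A,{\bf K}}^*W\|_{L^2}^2$. The task is then to absorb the remaining terms into the right side of the claimed estimate.

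For the perturbation pieces I would use \ref{axioms1}: after integration by parts each term in $E_{\bf K}(W)$ pairs $d_{A,{\bf K}}W$ or $d_{A,{\bf K}}^*W$ against $d{\bf K}_A^*W$ (or similar), and Axiom~1 with $k=0$, $\ell=1$ gives the uniform bound $\|d{\bf K}_A(V)\|_{L^2}\leq C_{\bf K}\|V\|_{L^2}$. Cauchy--Schwarz and the absolute-value bound $\|{\bf K}_A\|_{L^2}\leq C_{\bf K}$ then control the ${\bf K}_A\#W$ and $E_{\bf K}(W)$ contributions by $C(\|d_{A,{\bf K}}W\|_{L^2}^2+\|d_{A,{\bf K}}^*W\|_{L^2}^2+\|W\|_{L^2}^2)$, harmlessly. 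The Riemann curvature $\mathrm{Rm}$ is uniformly bounded on $Z$ (it is eventually translation invariant on the cylindrical end), so $\mathrm{Rm}\#W$ contributes another $\|W\|_{L^2}^2$ term.

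The serious term is $\int \langle F_A\#W,W\rangle$, and this is where the small-energy hypothesis enters. I would cover $Z$ by balls $\{B_R(x_i)\}$ of radius $R$ with bounded overlap and take a subordinate partition of unity $\{\phi_i\}$ satisfying $|\nabla\phi_i|\leq C/R$. On each ball $B=B_R(x_i)$, H\"{o}lder gives $\int_B|F_A|\,|W|^2\leq\|F_A\|_{L^2(B)}\|W\|_{L^4(B)}^2\leq\sqrt\delta\,\|W\|_{L^4(B)}^2$, and Kato's inequality $|\nabla|W||\leq|\nabla_A W|$ combined with the standard scale-invariant Sobolev embedding $W^{1,2}\hookrightarrow L^4$ on $B_R\subset\mathbb R^4$ yields
$$\|W\|_{L^4(B)}^2 \;\leq\; C\bigl(\|\nabla_A W\|_{L^2(B)}^2 + R^{-2}\|W\|_{L^2(B)}^2\bigr).$$
Summing over $i$ with bounded overlap and choosing $\delta$ small enough to ensure $C\sqrt\delta<\tfrac12$, the $\|\nabla_A W\|_{L^2}^2$ contribution can be absorbed into the left side of the Weitzenb\"{o}ck identity. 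A separate application of the same Kato plus cover plus Sobolev argument bounds $\|W\|_{L^4(Z)}^2$ by $C(\|\nabla_A W\|_{L^2}^2+R^{-2}\|W\|_{L^2}^2)$, which yields the $\|W\|_{L^4(Z)}^2$ term on the left side of the claimed inequality.

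The main obstacle I expect is not any single estimate but bookkeeping: ensuring that the constants absorbed are independent of $A$ (so the smallness of $\delta$ can be fixed once and for all) and that the partition of unity is compatible with the non-compact cylindrical end, i.e., that the cover has overlap bounded uniformly across the end. The latter is automatic since the metric $g$ is cylindrical and so admits a partition of unity with bounded overlap and a uniform $R$-independent construction. The $A$-independence of the key constant follows because the curvature absorption uses only $\|F_A\|_{L^2(B_R(x))}\leq\sqrt\delta$ and Kato (which is an $A$-independent pointwise inequality), while the perturbation bounds use the uniform constants $C_{\bf K}(k,\ell,p)$ of \ref{axioms1}.
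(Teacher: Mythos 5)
Your proposal is correct and follows essentially the same route as the paper's proof: integrate the perturbed Weitzenb\"ock formula against $W$, absorb the perturbation cross-terms using \ref{axioms1}, bound the Riemann curvature term by bounded geometry, and handle the $F_A$-term via the small-energy hypothesis, a covering argument, and the scale-invariant Sobolev inequality; the only difference is that the paper simply cites Struwe for that last step rather than reproducing it. One minor slip: to control $({\bf K}_A \# W, W)$ by $C\Vert W\Vert_{L^2}^2$ you should invoke the $L^\infty$-bound on ${\bf K}_A$ from \ref{axioms1} (taking $p=\infty$), not the $L^2$-bound --- using $\Vert {\bf K}_A\Vert_{L^2}$ with H\"older would yield $\Vert {\bf K}_A\Vert_{L^2}\Vert W\Vert_{L^4}^2$, and the coefficient is merely uniformly bounded, not small enough to absorb into the left-hand side.
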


\begin{proof}
Integrate the above Weitzenb\"{o}ck formula against $W$ to get

$$\begin{array}{rcl}
\Vert \nabla_\afA W \Vert_{L^2(Z)}^2 & = & (\nabla_\afA^* \nabla_\afA W, W)\\
&&\\
& = & \Vert d_{\afA, {\bf K}} W \Vert^2 + \Vert d_{\afA, {\bf K}}^* W \Vert^2 + (F_{\afA, {\bf K}} \# W, W) + (\mathrm{Rm} \# W , W)\\
&& +2 (d_{\afA, {\bf K}}^* W, d{\bf K}_\afA^* W) + 2 (d_{\afA, {\bf K}} W, d{\bf K}_\afA W) \\
&&+ ({\bf K}_\afA \# W, W)+ \Vert d{\bf K}_{\afA} W \Vert^2 + \Vert d{\bf K}_{\afA}^* W \Vert^2 \\
&&\\
& \leq & 2\left( \Vert d_{\afA, {\bf K}} W \Vert^2 + \Vert d_{\afA, {\bf K}}^* W \Vert^2 + \Vert d{\bf K}_{\afA} W \Vert^2 + \Vert d{\bf K}_{\afA}^* W \Vert^2\right)\\
&& + (F_{\afA, {\bf K}} \# W, W) + (\mathrm{Rm} \# W , W)+ ({\bf K}_\afA \# W, W).
\end{array}$$
The metric term $(\mathrm{Rm} \# W , W)$ is bounded since $Z$ has bounded geometry. Similarly, the assumptions on the perturbation ${\bf K}$ provide bounds of the form 
$$ \Vert d{\bf K}_{\afA} W \Vert^2 \leq C \Vert W \Vert^2, \indent \mathrm{and} \indent ({\bf K}_\afA \# W, W) \leq C' \Vert W \Vert^2$$ 
for constants $C, C'$ independent of $\afA$. The condition on $R$ is precisely what is needed to estimate the $F_A$-term using $A$-independent constants. See \cite{Struwe} for more details here.
\end{proof}

Using these lemmas, one can establish various regularity estimates for connections along the flow (\ref{ymheatflow}); e.g., those estimates in \cite[Section 3.2]{Struwe}. For example, write

$$\Vert \left( \partial_\tau + \Delta_{A, {\bf K}}  \right) W \Vert^2_{L^2(L^2)}  = \Vert \partial_\tau W \Vert_{L^2(L^2)}^2 + \Vert W \Vert_{L^2(W^{2,2})}^2 + 2 \left( \partial_\tau W, \Delta_{A, {\bf K}} W \right).$$
Then Lemma \ref{l1} can be used to estimate the cross term, yielding the following.

\begin{lemma}\label{l3} \cite[Lemma 3.2]{Struwe}
There is a constant $C$ (independent of $A$) and a constant $\tau_A > 0$ (depending on $A$ through $\Vert A - A_{ref} \Vert_{\C^1(Z)}$), so that

$$\Vert  \partial_\tau W \Vert_{L^2(L^2)}^2 + \Vert W \Vert_{L^2(W^{2,2})}^2  \leq C\left(  \Vert \left( \partial_\tau + \Delta_{A, {\bf K}}  \right) W \Vert^2_{L^2(L^2)} + \Vert W(0) \Vert_{L^2(Z)} \right)$$
for all smooth maps $W: \left[0, \tau_A \right] \rightarrow \Omega^i(Z, P(\frak{g}))$ with image in the space of compactly supported forms on $Z$. 
\end{lemma}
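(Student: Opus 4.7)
The plan is to expand the square on the right-hand side, absorb the cross term using integration by parts in $\tau$, and then convert the resulting $\Vert \Delta_{A, {\bf K}} W \Vert^2_{L^2(L^2)}$ bound into a $\Vert W \Vert^2_{L^2(W^{2,2})}$ bound via Lemma \ref{l1}. First I would expand
$$\Vert (\partial_\tau + \Delta_{A, {\bf K}}) W \Vert^2_{L^2(L^2)} = \Vert \partial_\tau W \Vert^2_{L^2(L^2)} + \Vert \Delta_{A, {\bf K}} W \Vert^2_{L^2(L^2)} + 2 \intdd{0}{\tau_A} (\partial_\tau W, \Delta_{A, {\bf K}} W)_{L^2(Z)} \, d\tau$$
so that the desired estimate amounts to bounding $\Vert W \Vert^2_{L^2(W^{2,2})}$ by the sum of $\Vert \Delta_{A,{\bf K}} W \Vert^2_{L^2(L^2)}$ and a controllable lower-order remainder, and bounding the cross term in terms of the data on the right-hand side.

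For the cross term, the key input is that $\Delta_{A, {\bf K}}$ is a non-negative self-adjoint operator on compactly supported forms: non-negativity is automatic from the formal factorization $\Delta_{A, {\bf K}} = d_{A, {\bf K}}^* d_{A, {\bf K}} + d_{A, {\bf K}} d_{A, {\bf K}}^*$, while self-adjointness uses \ref{axioms0} together with the fact that $W$ decays at infinity (as noted in the `passage to cylindrical end manifolds' discussion above). Differentiating $(W, \Delta_{A, {\bf K}} W)$ in $\tau$ and using self-adjointness gives
$$2(\partial_\tau W, \Delta_{A, {\bf K}} W) = \partial_\tau \bigl( W, \Delta_{A, {\bf K}} W \bigr) - \bigl(W, (\partial_\tau \Delta_{A, {\bf K}}) W\bigr),$$
and integrating from $0$ to $\tau_A$ yields
$$2 \intdd{0}{\tau_A} (\partial_\tau W, \Delta_{A, {\bf K}} W) \, d\tau = \bigl(W(\tau_A), \Delta_{A, {\bf K}} W(\tau_A)\bigr) - \bigl(W(0), \Delta_{A, {\bf K}} W(0)\bigr) - \intdd{0}{\tau_A} \bigl(W, (\partial_\tau \Delta_{A, {\bf K}}) W\bigr) d\tau.$$
The boundary term at $\tau_A$ is non-negative and can be dropped (after moving everything onto the correct side of the inequality); the $\tau = 0$ boundary term is bounded by an $A$-dependent multiple of $\Vert W(0) \Vert^2_{W^{1,2}}$ via the factorization of $\Delta_{A, {\bf K}}$ and \ref{axioms1}.

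Next, Lemma \ref{l1} converts $\Vert \Delta_{A, {\bf K}} W \Vert^2_{L^2}$ into $C_A^{-1} \Vert W \Vert^2_{W^{2,2}} - \Vert W \Vert^2_{L^2}$ pointwise in $\tau$; integrating in $\tau$ produces the desired $W^{2,2}$-norm on the left after absorbing the stray $\Vert W \Vert^2_{L^2(L^2)}$ into the right-hand side (using $\Vert W(\tau) \Vert_{L^2}^2 \leq 2 \Vert W(0)\Vert_{L^2}^2 + 2 \tau_A \Vert \partial_\tau W \Vert^2_{L^2(L^2)}$ and choosing $\tau_A$ small). The part of the argument I expect to be the main obstacle is the remainder term $\int_0^{\tau_A} (W, (\partial_\tau \Delta_{A, {\bf K}}) W) d\tau$: writing $\partial_\tau \Delta_{A, {\bf K}}$ as a first-order operator with coefficients linear in $\partial_\tau A$, together with the curvature-type coefficient $\partial_\tau d{\bf K}_A$ estimated by \ref{axioms1}, this term is schematically of the form $\int (\partial_\tau A) \# W \# \nabla_A W$, and to close the estimate one controls $\Vert \partial_\tau A \Vert$ in a suitable norm through the $C^1$-norm of $A - A_{ref}$ (this is where the dependence of $\tau_A$ on $\Vert A - A_{ref} \Vert_{\C^1(Z)}$ enters, allowing the remainder to be absorbed into $\Vert W \Vert^2_{L^2(W^{2,2})}$ via Cauchy-Schwarz and a small-constant/large-constant split). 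The only substantive departure from the closed, unperturbed case is the appearance of the perturbation terms, and these are all bounded uniformly via \ref{axioms1}, while the passage to the cylindrical end setting is harmless because the compactness of $W$'s support means no integration-by-parts boundary terms appear at infinity.
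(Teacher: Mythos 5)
Your approach is the standard one and matches what the paper (which only provides a one-line sketch citing Struwe) is pointing toward: expand the square, integrate the cross term by parts in $\tau$ using the self-adjointness and non-negativity of $\Delta_{A,{\bf K}}$ on compactly supported forms, drop the non-negative boundary term at $\tau_A$, and convert $\Vert \Delta_{A,{\bf K}}W\Vert^2$ into the $W^{2,2}$-norm via Lemma~\ref{l1}, absorbing the resulting lower-order term by choosing $\tau_A$ small. The elementary Gronwall-type step $\Vert W(\tau)\Vert_{L^2}^2 \leq 2\Vert W(0)\Vert_{L^2}^2 + 2\tau_A\Vert\partial_\tau W\Vert^2_{L^2(L^2)}$ is exactly how the $\Vert W\Vert^2_{L^2(L^2)}$ leftover gets swallowed.

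Two remarks. First, the $\tau = 0$ boundary term $(W(0),\Delta_{A,{\bf K}}W(0)) = \Vert d_{A,{\bf K}}W(0)\Vert^2_{L^2}+\Vert d^*_{A,{\bf K}}W(0)\Vert^2_{L^2}$ is genuinely of order $\Vert W(0)\Vert^2_{W^{1,2}(Z)}$, which you identify correctly; this matches Struwe's original Lemma 3.2, and the $\Vert W(0)\Vert_{L^2(Z)}$ appearing in the statement is presumably a misprint for $\Vert W(0)\Vert^2_{W^{1,2}(Z)}$ — your proof cannot, and should not be expected to, produce the literally printed (weaker) initial term. Second, the place where you flag the main obstacle — controlling $\int_0^{\tau_A}(W,(\partial_\tau\Delta_{A,{\bf K}})W)$ by bounding $\Vert\partial_\tau A\Vert$ through $\Vert A - A_{ref}\Vert_{\C^1(Z)}$ — is in fact not justified as written: a spatial $\C^1$-bound on $A(\tau)-A_{ref}$ at each time says nothing about $\partial_\tau A$, which is an independent quantity. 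On the most natural reading of the lemma, $A$ is a single ($\tau$-independent) connection, in which case $\partial_\tau\Delta_{A,{\bf K}}\equiv 0$ and the remainder vanishes, so the issue is moot and your proof closes; but if the lemma is meant for $\tau$-dependent $A$ (as in Struwe's application), the constant must additionally depend on $\Vert\partial_\tau A\Vert$ in some norm, and that dependence should be made explicit rather than folded into the $\C^1$-norm.
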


Here the notation $L^2(L^2)$ is short-hand for the space $L^2(\left[0, \tau_A \right], L^2(Z))$ of $L^2$-maps from the interval $\left[0, \tau_A \right]$ into the Banach space of $L^2$-forms on $Z$. The notation $L^2(W^{2,2})$ is defined similarly.

\subsection{Proofs of short-time existence results}\label{Proofs}

\subsubsection{Proof of Theorem \ref{TheoremShort-TimeExistence}: Uniqueness and short-time existence}\label{Proof1}

In light of the observations of the previous section, and since short-time existence in the closed case is well-treated in the literature (see \cite{DK,Struwe,KMN,Feehan1}), we will only sketch the basic proof of Theorem \ref{TheoremShort-TimeExistence}, emphasizing the aspects that are new to our situation. We refer primarily to the recent monograph \cite{Feehan1} by Feehan, since it is quite exhaustive and provides a nice overview of the various approaches.

\begin{remark}
Though Feehan \cite{Feehan1} has the closed, unperturbed case in mind, he appeals to general results about flows on Banach spaces (also discussed at length in \cite{Feehan1}). These general results apply to our setting for essentially the same reasons Struwe's results do, as discussed in Section \ref{ComparisonWithClosedCase}. Namely, the acyclic assumption on $a$ implies that the relevant function spaces are the Banach spaces $W^{k,p}(Z)$, and the Laplacian $\Delta_{A, {\bf K}}$ is symmetric and Fredholm on these spaces by Lemma \ref{l1}. 
\end{remark}

The (now standard) first step in establishing short-time existence for the Yang-Mills flow is to follow Donaldson's variant of the `de Turck trick'. Here, one first solves the equation

\begin{equation}\label{gaugeequivflow}
\partial_\tau B = - d_{B, {\bf K}}^* F_{B, {\bf K}} - d_{B, {\bf K}} d_{B, {\bf K}}^*(B - A_{ref}), \indent B(0) = A_0,
\end{equation}
for a path $B$ in $\A^{1,2}(P; a)$, where $A_{ref}$ is the fixed smooth reference connection. A solution $B$ to (\ref{gaugeequivflow}) exists on $\left[0, \tau_1 \right)$ for some $\tau_1 > 0$, and this solution has regularity
\begin{equation}\label{regularityofB}
B \in \C^0\left(\left[0, \tau_1 \right), \A^{1,2}(P; a) \cap \A^{2, p}(P; a)\right) \cap  \C^\infty\left(\left(0, \tau_1 \right) \times Z \right).
\end{equation}
Moreover, if $A_0$ is $\C^\infty(Z)$, then it follows from a bootstrapping argument that the flow $B$ is in $\C^\infty\left(\left[0, \tau_1 \right) \times Z \right)$. For a reference, see \cite[Theorems 16.4, 16.5]{Feehan1}.

We note also that since all elements of $\A^{1,2}(P; a) $ are irreducible, it follows from the argument of \cite[Section 6]{Struwe} that the solution $B$ to (\ref{gaugeequivflow}) is unique. See also \cite[Section 19.2]{Feehan1}.

The next step is to transform our solution $B$ from (\ref{gaugeequivflow}) into a solution of the perturbed Yang-Mills flow (\ref{ymheatflow}). To do this, solve the equation
\begin{equation}\label{uequation}
u^{-1} \partial_\tau u = - d_{B, {\bf K}}^* (B - A_{ref} ), \indent u(0) = e
\end{equation}
for a gauge transformation $u$ on $Z$. Given the regularity of $B$, this has a unique solution $u$, with regularity
$$u \in  \C^0\left(\left[0, \tau_1 \right), \G^{1, p}(P; e)\right) \cap  \C^1\left(\left(0, \tau_1 \right),  \G^{1, p}(P; e)\right).$$
Here we are using the assumption that $p > 4$ in order to obtain good Sobolev multiplication results (e.g., a well-defined gauge group). Moreover, if $A_0$ is $\C^\infty(Z)$, then $d_{B, {\bf K}}^* (B - A_{ref} )$ is smooth, and so $u$ is $\C^\infty$ on $\left[0, \tau_1 \right) \times Z$. See \cite[Lemma 19.1]{Feehan1}.

We set $A(\tau) \defeq \left(u(\tau)^{-1} \right)^*B(\tau)$, and so the regularity on $u, B$ give
$$A \in \C^0\left(\left[0, \tau_1 \right), \A^{0, p}(P; a)\right) \cap  \C^1\left(\left(0, \tau_1 \right),  \A^{0, p}(P; a)\right).$$
If $A$ has sufficient regularity, then one can check that it solves (\ref{ymheatflow}), and uniqueness follows as in \cite{Struwe}. In particular, if $A_0$ is smooth, then $A$ is smooth as well. To finish the proof, we need to work under the general hypothesis that $A_0 \in \A^{1,2} \cap \A^{2,p}$, and show that $A$ and $F_A$ have the claimed regularity (\ref{weakregularity}) and (\ref{weakFreg}), respectively.

\begin{remark}\label{Rem2}
(a) Note that the inhomogeneous term in (\ref{uequation}) is smooth for positive time, but not necessarily at time zero since $B(0) = A_0$. Any irregularity of $B$ at time $\tau = 0$ will likely lead to some sort of irregularity of $u(\tau)$ and hence $A(\tau)$, even for positive $\tau$. This is to be expected, given the observation that the flow (\ref{ymheatflow}) is not smoothing; see Remark \ref{StruRemark} (a). 

\medskip

(b) If $A_0$ only has regularity $W^{1,2}$, then it is at this stage that the proof breaks down. This is because one can only show that $u$ is a gauge transformation in $\G^{0, 2}(P; e)$, which is neither a Lie group, nor does it act on the space of connections. Struwe \cite{Struwe} handles this by taking a slightly different approach than sketched here, wherein he replaces the fixed reference connection $A_{ref}$ (which is constant in $\tau$) with a smooth $\tau$-dependent `background' connection. See Remark \ref{StruRemark} (b).
\end{remark}

Gauge equivariance of the curvature and covariant derivative give
$$F_{A, { \bf K}} = \mathrm{Ad}(u) F_{B, {\bf K}} , \indent d_{A, {\bf K}}^* F_{A, {\bf K}} = \mathrm{Ad}(u) d_{B, {\bf K}}^* F_{B, {\bf K}}.$$
The connection $B$ is smooth for positive time, and $u$ is continuous, so this shows that $F_{A, {\bf K}}$ and $d_{A, {\bf K}}^* F_{A, {\bf K}}$ are in $\C^0((0, \tau_0) \times Z)$. (The same conclusion holds in the absence of the perturbation.) We claim that this allows us to interpret
$$\partial_\tau (A - A_{ref}) = - d_{A, {\bf K}}^* F_{A, {\bf K}}$$
as an equation in $\C^0((0, \tau_1) \times Z)$. Indeed, a priori this is only an equation in $L^p$, due to the weak regularity we have on $A$. However, since the right-hand side is continuous, this shows $A \in \C^1(( 0, \tau_1 ), \C^0(Z))$, as claimed. 

Now we continue the bootstrapping. The $\C^1$-regularity on $F_A$ that was claimed in the statement of Theorem \ref{TheoremShort-TimeExistence} follows from the identities
$$\begin{array}{rcl}
\partial_\tau F_A&  =  & \mathrm{Ad}(u^{-1} \partial_\tau u) F_A + \mathrm{Ad}(u) \partial_\tau F_B,\\
&&\\
d^*_{A_{ref}} F_A & = & d^*_A F_A - * \left[ A_{ref} - A \wedge * F_A \right] \\
& = & \mathrm{Ad}(u) d^*_B F_B + * \left[ A - A_{ref} \wedge * \mathrm{Ad}(u)F_B \right], \\
&&\\
d_{A_{ref}} F_A & = &  \left[ A - A_{ref} \wedge \mathrm{Ad}(u) F_A \right] ,
\end{array}$$
since the right-hand side of each is continuous in all variables. Of course, similar computations hold in the presence of a perturbation. 

Moving on to the spatial derivatives of $A$, we write
$$\partial_\tau d_{A_{ref}} (A - A_{ref})  =  \partial_\tau \left( F_A - F_{A_{ref}} - \frac{1}{2} \left[ A - A_{ref} \wedge A - A_{ref} \right] \right).$$
The above observations show that the right-hand side is continuous. Similarly $\partial_\tau d_{A_{ref}}^* (A - A_{ref})$ is continuous. Combining these gives
$$A \in \C^1((0, \tau_1) \times Z).$$

\medskip

Now we will verify that $A$ is continuous at $\tau = 0$ in the $W^{1,2}(Z)$-topology. The flow gives
 \begin{equation}\label{ab}
A(\tau_b) - A(\tau_a) = \intdd{\tau_a}{\tau_b} \: \partial_\tau A \: d \tau = - \intdd{\tau_a}{\tau_b} \: d^*_{A, {\bf K}} F_{A, {\bf K}} \: d \tau.
\end{equation}
Take the $W^{1,2}$-norm of both sides to get
$$\begin{array}{rcl}
\Vert A(\tau_b) - A(\tau_a) \Vert_{W^{1,2}(Z)} & \leq  & \intdd{\tau_a}{\tau_b} \: \Vert d^*_{A, {\bf K}} F_{A, {\bf K}} \Vert_{W^{1,2}(Z)} \: d \tau\\
&&\\
& \leq & C \intdd{\tau_a}{\tau_b}\Big( \Vert d^*_{A, {\bf K}} F_{A, {\bf K}} \Vert_{L^2(Z)} + \Vert d_{A_{ref}}^* d^*_{A, {\bf K}} F_{A, {\bf K}} \Vert_{L^2(Z)}   \Big.\\
&&  \indent \indent \indent \indent \indent \indent \indent\Big. + \Vert d_{A_{ref}} d^*_{A, {\bf K}} F_{A, {\bf K}} \Vert_{L^2(Z)} \Big).
\end{array}$$
We want to show that the right-hand side goes to zero as $\tau_a, \tau_b$ go to zero. For the first two terms, we have
$$\begin{array}{l}
\indent \intdd{\tau_a}{\tau_b} \Big( \Vert d^*_{A, {\bf K}} F_{A, {\bf K}} \Vert_{L^2(Z)} + \Vert d_{A_{ref}}^* d^*_{A, {\bf K}} F_{A, {\bf K}} \Vert_{L^2(Z)} \Big) \\
\leq \vert \tau_a - \tau_b \vert \sup_{\left[ \tau_a, \tau_b \right]} \Big( \Vert d^*_{A, {\bf K}} F_{A, {\bf K}} \Vert_{L^2(Z)} + \Vert d_{A_{ref}}^* d^*_{A, {\bf K}} F_{A, {\bf K}} \Vert_{L^2(Z)} \Big)\\
\leq \vert \tau_a - \tau_b \vert \sup_{\left[ \tau_a, \tau_b \right]}   \Big( \Vert d^*_{B, {\bf K}} F_{B, {\bf K}} \Vert_{L^2(Z)} + \Vert A_{ref} - A \Vert_{\C^0(Z)} \Vert d^*_{B, {\bf K}} F_{B, {\bf K}} \Vert_{L^2(Z)} \Big).
\end{array}$$
In the second line we used the second Bianchi identity. The continuity properties of $B$ at $\tau = 0$ imply that the supremum here is bounded independent of $\tau_a, \tau_b > 0$ (assuming they are far from the maximal time $\tau_1$). In particular, the right-hand side of the above goes to zero as $\tau_a, \tau_b$ go to zero. It remains to show that
$$\limd{\tau_a, \tau_b \searrow 0} \intdd{\tau_a}{\tau_b}  \Vert d_{A_{ref}} d^*_{A, {\bf K}} F_{A, {\bf K}} \Vert_{L^p(Z)}  = 0.$$
For this, differentiate and use the flow equation to get
\begin{equation}\label{integratingthings}
\begin{array}{rcl}
\fracd{d}{d\tau} \frac{1}{2} \Vert d_{\afA, {\bf K}}^* F_{\afA,  {\bf K}} \Vert^2_{L^2(Z)}  & = & - \Vert d_{\afA,  {\bf K}} d_{\afA,  {\bf K}}^* F_{\afA,  {\bf K}} \Vert^2_{L^2(Z)} \\
&& \indent \indent +\left( *\left[ d_{\afA,  {\bf K}}^* F_{\afA,  {\bf K}} \wedge  * F_{\afA,  {\bf K}} \right], d_{\afA,  {\bf K}}^* F_{\afA,  {\bf K}} \right)\\
&& \indent \indent + \left(* d^2_A {\bf K}( d_{\afA,  {\bf K}}^* F_{\afA,  {\bf K}} , *  F_{\afA,  {\bf K}} ) , d_{\afA,  {\bf K}}^* F_{\afA,  {\bf K}} \right)
\end{array}
\end{equation}
Here $d^2_A {\bf K}$ is the second derivative of ${\bf K}$ at $A$. Note that the last two terms on the right are bounded by some constant $C$ that is independent $\tau$, provided $\tau$ is sufficiently small. Integrating (\ref{integratingthings}) over $\left[\tau_a, \tau_b \right]$ then gives
$$\begin{array}{rcl}
\intdd{\tau_a}{\tau_b}  \Vert d_{A_{ref}} d^*_{A, {\bf K}} F_{A, {\bf K}} \Vert_{L^p(Z)} & \leq & \frac{1}{2} \Vert d_{\afA(\tau_a), {\bf K}}^* F_{\afA(\tau_a),  {\bf K}} \Vert^2_{L^2(Z)} \\
&& - \frac{1}{2} \Vert d_{\afA(\tau_b), {\bf K}}^* F_{\afA(\tau_b),  {\bf K}} \Vert^2_{L^2(Z)}  + \vert \tau_b - \tau_a \vert C\\
&&\\
& = &  \frac{1}{2} \Vert d_{B(\tau_a), {\bf K}}^* F_{B(\tau_a),  {\bf K}} \Vert^2_{L^2(Z)} \\
&& - \frac{1}{2} \Vert d_{B(\tau_b), {\bf K}}^* F_{B(\tau_b),  {\bf K}} \Vert^2_{L^2(Z)}  + \vert \tau_b - \tau_a \vert C\\
\end{array}$$
for some constant $C$. The continuity of $B$ at $\tau = 0$ shows that this is going to zero when $\tau_a, \tau_b$ approach $0$. 

\medskip

It remains to establish the $W^{2,2}(Z)$-regularity on $A$ and $F_A$; we begin with the curvature. For this, we have
$$\Vert F_A \Vert_{W^{2,2}(Z)} \leq C \left( \Vert F_A \Vert_{L^2(Z)} + \Vert d_{A_{ref}} d_{A_{ref}}^* F_A \Vert_{L^2(Z)} + \Vert d_{A_{ref}}^* d_{A_{ref}} F_A \Vert_{L^2(Z)}  \right)$$
for a constant $C$ that is independent of $A = A(\tau)$. Now use 
$$d_{A_{ref}} = d_A + \big[ A_{ref} - A \wedge \cdot \big]$$ 
and the Bianchi identity to continue this as
$$\begin{array}{rcl}
\Vert F_A \Vert_{W^{2,2}(Z)} & \leq & C' \Big((1+ \Vert A- A_{ref} \Vert_{\C^1(Z)} ) \Vert F_A \Vert_{L^2(Z)} + \Vert d_A d_A^* F_A \Vert_{L^2(Z)}  \Big.\\
&&\indent \indent \indent \Big. + \Vert A- A_{ref} \Vert_{\C^0(Z)} \Vert F_A \Vert_{W^{1,2}(Z)} \Big).
\end{array}$$
Since $ \Vert d_A d_A^* F_A \Vert_{L^2(Z)}  =  \Vert d_B d_B^* F_B \Vert_{L^2(Z)} $, the right-hand side is in $L^\infty(\left[\tau_a, \tau_b \right])$ for any $0 < \tau_a <\tau_b < \tau_1$. This establishes the claimed regularity for $F_A$ on $(0, \tau_1)$. 

\begin{remark}\label{moreonregularityremark}
(a) By replacing $F_A$ with $d_A^* F_A$, the same argument shows $d_A^* F_A \in L^\infty_{loc}((0, \tau_1), W^{2,2}(Z))$.

\medskip

(b) This argument does not extend to show that $F_A$ is in $L^\infty_{loc}((0, \tau_1), W^{k,2}(Z))$ for $k \geq 3$. This is because the higher derivatives would produce terms involving $ \Vert A- A_{ref} \Vert_{\C^{k-1}(Z)}$, which we can say nothing about unless $A$ has higher regularity. 
\end{remark}

Now to show $A \in \C^0_{loc}(W^{2,2})$, fix a compact set $S \subset (0, \tau_1)$, as well as $\tau_a, \tau_b \in S$. Taking the $W^{2,2}$-norm of (\ref{ab}) gives
$$\begin{array}{rcl}
\Vert A(\tau_b) - A(\tau_a) \Vert_{W^{2,2}(Z)} & \leq  & \intdd{\tau_a}{\tau_b} \: \Vert d^*_{A, {\bf K}} F_{A, {\bf K}} \Vert_{W^{2,2}(Z)} \: d \tau.
\end{array}$$
By Remark \ref{moreonregularityremark} (a), we can bound this by
$$ \Vert A(\tau_b) - A(\tau_a) \Vert_{W^{2,2}(Z)} \leq C \vert \tau_a - \tau_b \vert$$
with a constant $C$ that depends only on the compact set $S$.

\qed

\subsubsection{Proof of Propositions \ref{EnergyConcentrationTheorem} and \ref{BubbleFormation}: Energy concentration and bubble formation}\label{ProofOfEnergyConcentrationTheorem}

Suppose $A(\tau)$ is a solution of (\ref{ymheatflow}) on $\left[0, \tau_1 \right)$ with the regularity of Theorem \ref{TheoremShort-TimeExistence}. Let $\delta > 0$ be as in the statement of Lemma \ref{l2}. It follows from Lemmas \ref{l2} and \ref{l3} that if there is some $R > 0$ with
$$\sup_{z \in Z, \; 0 \leq \tau < \tau_1}   \intd{B_R(z)} \vert F_{A(\tau), {\bf K}} \vert^2  < \delta $$
then $A(\tau)$ can be continuously extended to $\tau = \tau_1$ (hence extended for a positive time past $\tau_1$, as well); see \cite[Lemma 3.6]{Struwe} for a proof. In particular, the quantity $\overline{\tau}$ from the statement of Proposition \ref{EnergyConcentrationTheorem} does indeed characterize the maximal existence time.  

Suppose now that we are in the setting where energy concentration occurs, and define $\overline{\tau} \in \left(0, \infty \right]$ as in Proposition \ref{EnergyConcentrationTheorem}. Now the remaining assertions of Propositions \ref{EnergyConcentrationTheorem} and \ref{BubbleFormation} follow essentially the same rescaling argument given by Schlatter \cite{Sch}. We summarize the details. 

Fix sequences $\tau_n, R_n$ as in the statement of Proposition \ref{BubbleFormation}. Then find $z_n \in Z$ where the quantity 
\begin{equation}\label{znguy}
\intd{B_{R_n}(z_n)} \vert F_{A(\tau_n), {\bf K}} \vert^2 
\end{equation}
is maximized (the curvature decays on the cylindrical ends, so there do indeed exist such $z_n$). Due to the non-compactness of $Z$, there are two cases to consider.

\medskip

\noindent \emph{Case 1: The $z_n$ are contained in a compact subset of $Z$.}

\medskip

\noindent \emph{Case 2: The $z_n$ are not contained in any compact subset of $Z$.}

\medskip

In Case 1, we can pass to a subsequence and assume the $z_n$ converge to some $z_{01} \in Z$. Now define the sequence $A_n$ of connections by rescaling around $z_{01}$ as described in Proposition \ref{BubbleFormation}. Uhlenbeck's compactness theorem implies that the $A_n$ converge, in the sense described in Proposition \ref{BubbleFormation}, to some limiting connection $A_\infty$ on $\bb{R}^4$ with finite energy. The flow equation (\ref{ymheatflow}) rescales in such a way to imply that $A_\infty$ is Yang-Mills. 

\begin{remark}
The rescaling is also such that the perturbation term vanishes in the limit, so $A_\infty$ is Yang-Mills in the usual sense; this uses the assumption from \ref{axioms1} that ${\bf K}_A$ is uniformly bounded in $L^p$. This type of rescaling is carried out explicitly in Section \ref{APositiveEnergyGap}.
\end{remark}
 By removal of singularities, $A_\infty$ extends to a finite-energy Yang-Mills connection on some bundle over $S^4$. Let $\eta_{S^4} > 0$ denote the infimum of all such energy values (this is indeed positive, see Section \ref{APositiveEnergyGap}). The conformal invariance of the energy justifies the appearance of $\eta_{S^4}$ in (\ref{mustconcentrate1}).

Of course, it is possible that there are multiple points $z_n$ where (\ref{znguy}) is maximized. Repeating the above to all such sequences that are contained in some compact subset of $Z$, we obtain a sequence
$$z_{01}, z_{02}, \ldots, z_{0K_0}$$
of bubbling points in $Z$. There are only a finite number $K_0$ of such points because each Yang-Mills bubble has energy at least $\eta_{S^4}$, and the energy along the flow is no greater than the energy of $A_0$. This finishes the analysis for Case 1. 

\medskip

In Case 2, we can pass to a subsequence and assume $z_n \in \left[0, \infty \right) \times Y$ is on the cylindrical end for all $n$. Then we can write $z_n = (s_n, y_n)$ in coordinates, and assume further that the $y_n$ converge to some $y_{j1}$; where $j >0$ is an indexing integer that will be specified at the end. Since the $z_n$ are not contained in any compact subset of $Z$, it follows that
$$\limd{n \rightarrow \infty} s_n = \infty.$$
Let $s_{j}(\tau): \left[0, \infty\right) \rightarrow \left[0, \infty \right)$ be a function with $s_{j}(\tau_n) = s_n$. Then in the language established after Proposition \ref{EnergyConcentrationTheorem}, the associated bubbling point is
 $$z_{j1}^\tau \defeq (s_j(\tau) + s_{j1}, y_{j1}),$$ 
 where $\smash{s_{j1} \defeq 0}$. Just as in Case 1, rescaling as in Proposition \ref{BubbleFormation} produces a non-flat Yang-Mills connection on $S^4$, and this requires energy at least $\eta_{S^4}$. 

Once again, there may be multiple sequences $\left\{z_n \right\}$ realizing Case 2. Suppose $\left\{z_n'\right\}$ is another such sequence. As before, write $z_n' = (s_n', y_n')$ with the $y_n'$ converging to some point $y_{j' k}$ with $j', k$. We determine these indices by comparing this new sequence with the sequence just considered. This comparison comes by considering cases based upon the relative rates at which the $s_n$ and $s_n'$ go to infinity. If they go to infinity at the same rate, then declare $j' \defeq j$ and $k \defeq 2$, and we set
$$s_{j2} \defeq \limd{n\rightarrow \infty} s_n - s_n'.$$
If $s_n$ goes to infinity faster (resp. slower) than $s_n'$, then we want to think of $j$ as being `greater than' (resp. `smaller than') $j'$, and we set $k \defeq 1$. Find a function $s_{j'}(\tau)$ with $s_{j'}(\tau_n)  = s_n'$, and set $s_{j'1} \defeq 0$. Repeating with all such sequences, we arrive at the set of bubbling points described in Proposition \ref{EnergyConcentrationTheorem}. Once again, there can only be a finite number of bubbling points, and hence a finite number $J$ of rates at which these bubbles escape down the cylindrical end. Once all of these bubbling points have been identified, precise values for the indices $j, j'$, etc. can be given according to this `greater than/less than' procedure. \qed

\subsubsection{Proof of Proposition \ref{ConvergenceForFiniteTimeBubbling}: Convergence with finite-time bubbling}\label{ProofOfConvergenceWithFinite-TimeBubbling}

First we will prove $L^2(Z)$-convergence of $A(\tau)$ as $\tau$ increases to $\overline{\tau} < \infty$. We begin with a few simple computations. The flow equation gives 
$$\partial_\tau F_{A, {\bf K}} = - d_{A, {\bf K}} d^*_{A, {\bf K}} F_{A, {\bf K}}.$$ 
Take the $L^2(Z)$-inner product of this with $F_{A, {\bf K}}$ to get
$$\frac{d}{d \tau} \frac{1}{2} \Vert F_{A, {\bf K}} \Vert^2_{L^2(Z)} = - \Vert d^*_{A, {\bf K}} F_{A, {\bf K}} \Vert_{L^2(Z)}^2.$$
Note that $\frac{1}{2}  \Vert F_{A, {\bf K}} \Vert^2_{L^2(Z)} $ and $\Vert F^+_{A, {\bf K}} \Vert^2_{L^2(Z)} $ differ by the constant $\CS_{K, P}(a)$; see (\ref{ympertmotheruck}). In particular, integrating over some interval $\left[\tau_a, \tau_b \right]$ gives
\begin{equation}\label{integratingover}
\intdd{\tau_a}{\tau_b} \Vert d^*_{A, {\bf K}} F_{A, {\bf K}} \Vert_{L^2(Z)}^2 = \frac{1}{2} \Vert F^+_{A(\tau_a), {\bf K}} \Vert^2_{L^2(Z)}  - \frac{1}{2} \Vert F_{A(\tau_a), {\bf K}}^+ \Vert^2_{L^2(Z)} .
\end{equation}

Recall the identity (\ref{ab}). Take the $L^2(Z)$-norm of both sides of (\ref{ab}), and then using H\"{o}lder's inequality in the time-variable to get
\begin{equation}\label{beforeclaim}
\begin{array}{rcl}
\Vert A(\tau_b) - A(\tau_a) \Vert_{L^2(Z)} & \leq & \intdd{\tau_a}{\tau_b} \Vert d_{A, {\bf K}}^* F_{A, { \bf K}}  \Vert_{L^2(Z)} \: d \tau\\
&&\\
& \leq & \vert \tau_b - \tau_a \vert^{1/2} \Vert d_{A, {\bf K}}^* F_{A, {\bf K}} \Vert_{L^2(\left[ \tau_a, \tau_b \right] \times Z)}.
\end{array}
\end{equation}
Combining this with (\ref{integratingover}) gives
$$\Vert A(\tau_b) - A(\tau_a) \Vert_{L^2(Z)}^2 \leq \vert \tau_b - \tau_a \vert \sup_{\left[ \tau_a, \tau_b \right]}  \Vert F^+_{A, {\bf K}} \Vert^2_{L^2(Z)}.$$
The quantity $\Vert F^+_{A, {\bf K}} \Vert^2_{L^2(Z)}$ is non-increasing along the flow, so we have
$$\Vert A(\tau_b) - A(\tau_a) \Vert_{L^2(Z)}^2 \leq \vert \tau_b - \tau_a \vert  \Vert F^+_{A_0, {\bf K}} \Vert^2_{L^2(Z)}.$$
This implies that $A(\tau)$ is $L^2(Z)$-Cauchy as $\tau \nearrow \overline{\tau}$. In particular, the $A(\tau)$ converge in $L^2(Z)$ to some 
$$A_1 \in \A^{0,2}(P; a).$$ 

The $W^{1,2}_{loc}$-convergence to $A_1$ on $Z \backslash \left\{z_{01},\ldots, z_{0K_0} \right\}$ can now be shown using Schlatter's argument for the proof of \cite[Theorem 1.2 (i)]{Sch}, which is local in nature and hence not sensitive to the cylindrical ends. 

Finally, we establish the energy inequality (\ref{energyinequaltiy}). For this, let $\tau_n, R_n$ be as in the statement of Proposition \ref{BubbleFormation}; we may assume the $\tau_n$ are increasing, and the $R_n$ are small. Consider the complement
$$Z_n \defeq Z \backslash \bigcup_{j,k} B_{R_n}\left(z_{jk}^{\tau_n} \right),$$
of the $R_n$-balls around the bubbling points. Then
$$\YM_{\bf K}(A(\tau_n)) = \frac{1}{2} \intd{Z_n} \vert F_{A(\tau_n), {\bf K}} \vert^2 + \sumd{j, k} \frac{1}{2} \intd{B_{R_n}(z_{jk}^{\tau_n} )} \vert F_{A(\tau_n), {\bf K}} \vert^2 .$$
The energy is non-increasing along the flow, so the sequence $\YM_{\bf K}(A(\tau_n)) $ converges to the liminf of $\YM_{\bf K}(A(\tau))$. Hence
$$\limsup_{n \rightarrow \infty} \frac{1}{2} \intd{Z_n} \vert F_{A(\tau_n), {\bf K}} \vert^2 + \eta_{S^4} \sumd{j, k} n_{j,k} \leq \liminf_{\tau \nearrow \overline{\tau}} \YM_{\bf K}(A(\tau)).$$
On the other hand, for each compact set $S \subset Z$, the $W^{1,2}_{loc}$-convergence of the $A(\tau)$ gives
$$\frac{1}{2} \intd{S} \vert F_{A_1, {\bf K}} \vert^2 \leq  \limsup_{n \rightarrow \infty} \frac{1}{2} \intd{Z_n} \vert F_{A(\tau_n), {\bf K}} \vert^2.$$ 
Since this bound is plainly independent of the compact set $S$, we obtain (\ref{energyinequaltiy}). \qed

\section{Long-time existence and convergence at infinite time}\label{ConvergenceAtInfiniteTime}\label{Long-TimeExistence}

Fix a perturbation ${\bf K}$ as well as an acyclic $K$-flat connection $a$ on $Q$. Throughout this section, we assume ${\bf K}$ satisfies the conclusions of Theorem \ref{existencetheorem}. 
				
				In Section \ref{TheIndex}, we associated to $a$ an index $\mathrm{Ind}_{P, K}(a) \in \bb{Z}$, defined as the index of a natural Fredholm operator. Our long-time existence result below relies on a certain restriction of this index. To state this restriction, we consider the extended real number 
$$\mathcal{I}_G \defeq \inf_{R \rightarrow S^4} \:  n_G \: \left|  \kappa(R)  \right|,$$
where the infimum is over all principal $G$-bundles $R \rightarrow S^4$ for which $\kappa(R) \neq 0$. Here $n_G > 0$ is the constant from (\ref{actionindex}) and $\kappa(R)$ is the characteristic number from Section \ref{PerturbedChern-SimonsTheory}. The significance of $\mathcal{I}_G$ is that bubbling cannot occur in any ${\bf K}$-ASD moduli space of dimension smaller than $\mathcal{I}_G$; see the end of Section \ref{PerturbedYang-MillsTheory} for more details on this moduli space. See also \cite[Section 8]{AHS} for a similar discussion for general $G$.

\begin{example}
(a) Suppose $G = \SU(r)$. Then $\mathcal{I}_G = 4r$. 

\medskip

(b) Suppose $G =\PU(r)$. Then $\mathcal{I}_G = 4r$. In the notation of Example \ref{ex1} (d), this can be seen by noting that each $\PU(r)$-bundle $R \rightarrow S^4$ lifts to an $\SU(r)$-bundle $R' \rightarrow S^4$, and $q_1(R) = 2r c_2(R')$.  
\end{example}

In general, when $G$ is abelian, we have $\mathcal{I}_G = \infty$ since $\kappa(R) = 0$ for all $R$. When $G$ is not abelian, it follows from Lemma \ref{lemma1} that $\mathcal{I}_G \in (0, \infty)$ is finite and positive.

\medskip

With these preliminaries, we can state the main result.

\begin{theorem}  \label{heatflowtheorem}
Fix a perturbation ${\bf K}$, and assume this satisfies the conclusion of Theorem \ref{existencetheorem}. Suppose $a \in \A(Q)$ is a $K$-flat connection with the property that
$$\mathrm{Ind}_{K, P}({\afa}) < {\mathcal{I}}_G.$$
Then there is some $\eta(a)  > 0$ (depending on $a$, $Z$, the bundle, the metric, and the perturbation) so the following holds.

 Fix $p > 4$, as well as $A_0 \in \A^{1,2}(P; a) \cap \A^{2, p}(P; a)$, and let $A$ be the solution to the flow (\ref{ymheatflow}) from Theorem \ref{TheoremShort-TimeExistence}. If the initial condition $A_0$ satisfies
$$\Vert F_{\afA_0, {\bf K}}^+ \Vert^2_{L^2(Z)} < \eta(a),$$
then the solution $A$ exists for all time $\tau \in \left[0, \infty \right)$, with the regularity asserted in Theorem \ref{TheoremShort-TimeExistence}. Moreover, for each $2 \leq q \leq 4$ the $A(\tau)$ converge exponentially in $W^{1,q}(Z)$, as $\tau$ approaches $\infty$, to a unique ${\bf K}$-ASD connection $A_\infty \in \A^{1, q}(P; a)$. 
\end{theorem}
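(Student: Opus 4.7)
The plan is to proceed in three stages: first establish the positive energy gap $\eta(a)$, then use it to exclude bubbling and obtain long-time existence, and finally combine Uhlenbeck compactness with ASD-regularity to upgrade subsequential weak convergence to genuine $W^{1,q}(Z)$-Cauchy convergence. The key structural identity is $\YM_{{\bf K}}(A) = \Vert F^+_{A,{\bf K}} \Vert^2_{L^2(Z)} + \CS_{K,P}(a)$ from (\ref{ympertmotheruck}), which makes $\Vert F^+_{A(\tau),{\bf K}} \Vert^2_{L^2}$ monotonically non-increasing along the flow. I will construct $\eta(a) > 0$ (this step being carried out separately in Section \ref{APositiveEnergyGap}) with the property that no nontrivial bubble trajectory and no non-ASD ${\bf K}$-YM connection accessible as an Uhlenbeck limit from $\A^{1,2}(P;a)$ satisfies $\Vert F^+ \Vert^2 \leq \eta(a)$. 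The argument is by contradiction via Theorem \ref{UhlComp1}: a violating sequence either concentrates a bubble---ruled out by the index hypothesis $\mathrm{Ind}_{K,P}(a) < \mathcal{I}_G$ together with the action-index identity (\ref{actionindex})---or yields a non-ASD ${\bf K}$-YM limit in $\A^{1,2}(P;a)$, which is incompatible with ASD-regularity.

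Given this gap, long-time existence is immediate from Proposition \ref{EnergyConcentrationTheorem}: since $\Vert F^+_{A(\tau),{\bf K}} \Vert^2_{L^2} < \eta(a)$ for all $\tau < \overline{\tau}$, the bubble scenarios of Propositions \ref{EnergyConcentrationTheorem} and \ref{BubbleFormation} are precluded by the very definition of $\eta(a)$, forcing $\overline{\tau} = \infty$ with uniform curvature bounds. For subsequential convergence at infinite time, integrating the energy identity $\frac{d}{d\tau}\YM_{{\bf K}}(A) = -\Vert d^*_{A,{\bf K}} F_{A,{\bf K}} \Vert^2_{L^2}$ over $[0,\infty)$ produces a sequence $\tau_n \to \infty$ along which $\Vert d^*_{A(\tau_n),{\bf K}} F_{A(\tau_n),{\bf K}} \Vert_{L^2} \to 0$; Theorem \ref{UhlComp1} then yields, modulo gauge and after passing to a subsequence, smooth convergence to a broken ${\bf K}$-YM trajectory $(A^0; B^1, \dots, B^J)$ asymptotic to $a$. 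The gap applied piecewise forces every piece to be ${\bf K}$-ASD, and ASD-regularity for trajectories on $\mathbb{R} \times Y$ combined with the index bound forces $J = 0$, so the limit $A_\infty \defeq A^0$ is a ${\bf K}$-ASD connection in $\A^{1,2}(P;a)$.

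The hard part---and this is where the cylindrical-end geometry really bites---is upgrading this subsequential $\C^\infty_{loc}$-convergence to a $W^{1,q}(Z)$-Cauchy statement on the full four-manifold. A priori the limit $A_\infty$ could depend on the chosen subsequence, and energy could escape to $\A^{1,q}(P;a')$ for some other $K$-flat connection $a'$ via the nontrivial gauge transformation $u_1$ of Remark \ref{ExtensionToZ}. Here ASD-regularity must play the role of a Morse-Bott nondegeneracy condition: surjectivity of $d^+_{A_\infty,{\bf K}}$ combined with acyclicity of $a$ will produce a uniformly bounded right inverse for $d^+_{A_\infty,{\bf K}} \oplus d^*_{A_\infty,{\bf K}}$ on $W^{k,q}(Z)$-completions over the \emph{entire} non-compact manifold. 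Placing the tail of the flow into a local Coulomb slice through $A_\infty$ and using this inverse to linearize the equation, I expect to obtain a differential inequality of the form $\frac{d}{d\tau}\Vert F^+_{A,{\bf K}}\Vert^2_{L^2(Z)} \leq -c\,\Vert F^+_{A,{\bf K}}\Vert^2_{L^2(Z)}$ for some $c > 0$, hence exponential decay of $\Vert F^+_{A(\tau),{\bf K}}\Vert_{L^2}$. Integrating the flow in the $W^{1,q}$-norm as in (\ref{beforeclaim}) and bounding $\Vert d^*_{A,{\bf K}} F_{A,{\bf K}} \Vert_{W^{1,q}(Z)}$ by a constant multiple of $\Vert F^+_{A,{\bf K}} \Vert_{L^2}$ via global elliptic estimates will then deliver the $W^{1,q}$-Cauchy property with exponential rate, and full-manifold $W^{1,q}(Z)$-convergence rules out energy escape down the end and fixes $A_\infty \in \A^{1,q}(P;a)$.
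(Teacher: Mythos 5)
Your plan reproduces the paper's overall architecture — a positive energy gap, bubble exclusion via the index hypothesis, and ASD-regularity to upgrade subsequential convergence to $W^{1,q}(Z)$-Cauchy convergence — but there are two substantive points where it departs from, or glosses over, the actual mechanism.

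First, you conflate the energy gap (Theorem \ref{epsdef1}) with the index-based bubble exclusion in a way that misstates what each piece does. The paper establishes $\eta(a)>0$ purely as a compactness statement, with no reference to $\mathrm{Ind}_{K,P}(a)$: in the bubbling case of that argument (Case 2 of Theorem \ref{epsdef1}), the contradiction comes from the rescaled self-dual forms $W_n$ converging to a nontrivial element in the kernel of $d_{A_\infty}$ on $S^4$, which is impossible by the Weitzenb\"{o}ck identity \ref{Observation2}; the index hypothesis on $a$ plays no role there since the bubble lives on $S^4$, not on $Z$. You propose instead to kill that case with the index hypothesis and the action-index identity, which is the wrong tool for a sphere bubble. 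More importantly, ASD bubbles on $S^4$ have $\Vert F^+\Vert_{L^2}=0$, so no choice of $\eta(a)>0$ can ``preclude'' them pointwise, and the claim that long-time existence is ``immediate from Proposition \ref{EnergyConcentrationTheorem}'' once you have the gap is incorrect. What the paper actually does in Section \ref{Proof2a} is: use \ref{A} to force any bubble to be ASD, hence with energy at least $\mathcal{I}_G/n_G$; then run an energy-and-index accounting through \ref{Claim1Long-Time} (Uhlenbeck limit to a broken ${\bf K}$-ASD trajectory) and \ref{Claim2Long-Time} (non-negativity of $\mathrm{Ind}_{K,P}(u^*a)$, which requires Floer's gluing theorem and ASD-regularity), arriving at $\mathrm{Ind}_{K,P}(u^*a)\le -1$ from (\ref{comsasfa}), a contradiction. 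None of this is immediate, and it is where the hypothesis $\mathrm{Ind}_{K,P}(a)<\mathcal{I}_G$ and the choice $\eta(a)\le 1/n_G$ are actually consumed.

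Second, for infinite-time convergence you propose to put the tail of the flow into a Coulomb slice through $A_\infty$ and use a bounded right inverse of $d^+_{A_\infty,{\bf K}}\oplus d^*_{A_\infty,{\bf K}}$. This is a plausible alternative route, but it has a circularity you do not address: placing $A(\tau)$ into a slice through $A_\infty$ presupposes that $A(\tau)$ stays gauge-close to $A_\infty$ for all large $\tau$, which is precisely the full-flow convergence you are trying to establish (as opposed to mere subsequential convergence modulo gauge). The paper sidesteps this by establishing, in \ref{Step2}, a Poincar\'{e} inequality $\Vert F^+_{A(\tau),{\bf K}}\Vert_{L^s(Z)} \le C_{PI}\Vert d_{A(\tau),{\bf K}}F^+_{A(\tau),{\bf K}}\Vert_{L^s(Z)}$ with a constant uniform in $\tau$, proven by contradiction using Uhlenbeck compactness, Floer gluing, and Remark \ref{uniformC} — no slice is needed, and the differential inequality in \ref{Step3} then falls out directly. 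Your bootstrap to $W^{1,q}$ via ``global elliptic estimates'' also understates the work: the paper needs higher-order Poincar\'{e} estimates (\ref{Step4}) and a careful interpolation ladder (\ref{Step5}–\ref{Step8}) because the nonlinear cubic terms must be absorbed at each stage. If you want to pursue the Coulomb-slice route you would need to break the circularity (e.g., by a \L ojasiewicz-type argument, as in Feehan's work cited in Remark \ref{GenREm}(f)), but the paper's contradiction-based Poincar\'{e} estimate is the cleaner choice here.
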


In Section \ref{DefinitionOfEta}, we give a fairly concrete definition of the constant $\eta(a)$ from the theorem. The proof of Theorem \ref{heatflowtheorem} occupies Sections \ref{Proof2a} (long-time existence) and \ref{Infinite-TimeConvergence} (infinite-time convergence). Our basic analytic arguments for the proof closely follow those of \cite{Sch,Schglobal,Waldron}. See also Feehan's book \cite{Feehan1} for a thorough treatment of the asymptotics of the flow in the absence of perturbations.

The relevance of convergence in $W^{1,q}(Z)$ for $2 <  q \leq 4$ is that it ensures $A_\infty$ is $\G^{2,q}$-gauge equivalent to a smooth connection (e.g., apply Uhlenbeck compactness to the constant sequence $A_n \defeq A_\infty$).

\subsection{A positive energy gap}\label{APositiveEnergyGap}\label{DefinitionOfEta}

Let $n_G$ be as in (\ref{actionindex}). It will be clear from the proof of Theorem \ref{heatflowtheorem} that the constant $\eta(a)$ appearing in its statement can be taken to be the minimum of the numbers $1, 1/n_G$ and the following three numbers:
\begin{adjustwidth}{15pt}{}
\begin{customthmQuant}{A}\label{A}
The infimum
$$\inf_{A} \Vert F_{\afA}^+ \Vert^2_{L^2(S^4)} .$$
Here the infimum ranges over all connections $A$ on any principal $G$-bundle over $S^4$ that are Yang-Mills, not ASD, and satisfy $\YM(A) \leq \CS_{K, P}(a) +1$. 
 \end{customthmQuant}
 
\medskip 
 
\begin{customthmQuant}{B}\label{B}
The infimum
 $$\inf_{ A} \Vert F_{\afA, {\bf K}}^+ \Vert^2_{L^2(Z)}.$$ 
Here the infimum ranges over all connections $A$ on $P$ that are ${\bf K}$-YM, not ${\bf K}$-ASD, and satisfy $\YM_{{\bf K}}(A) \leq \CS_{K, P}(a) +1$.
\end{customthmQuant}

\medskip 
 
\begin{customthmQuant}{C}\label{C}
The infimum
 $$\inf_{A} \Vert F_{\afA, {\bf K}^Y}^+ \Vert^2_{L^2(\bb{R} \times Y)}.$$ 
Here the infimum ranges over all connections $A$ on $\bb{R} \times Q$ that are ${\bf K}^Y$-YM, not ${\bf K}^Y$-ASD, and satisfy $\YM_{{\bf K}^Y}(A) \leq \CS_{K, P}(a) +1$.
\end{customthmQuant}
\end{adjustwidth}
 For the quantity in \ref{C}, the perturbation ${\bf K}^Y$ is the translationally-invariant perturbation on $\bb{R} \times Q$ from Remark \ref{defofKY}. The metric on $\bb{R} \times Y$ is $ds^2 + g^Y$. 

The next theorem justifies this choice of $\eta(a)$ by stating that each of the infima in \ref{A}-\ref{C} is positive. In light of the identity (\ref{ympertmotheruck}), this positivity can be viewed as saying that the perturbed Yang-Mills functional has a positive energy gap above the minimum energy level given by the anti-self dual connections. (Of course, this minimum energy level is only a theoretical lower bound, since ASD connections may not exist.)

\begin{theorem}\label{epsdef1}
The quantities in \ref{A}, \ref{B}, and \ref{C} are positive.
\end{theorem}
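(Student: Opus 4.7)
The plan is to argue by contradiction in each of \ref{A}, \ref{B}, and \ref{C}, using a compactness + bubble analysis together with a rigidity step for ASD-regular ASD connections. I will sketch the argument for \ref{B}; cases \ref{A} and \ref{C} are parallel.

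Suppose the infimum in \ref{B} were zero and pick a sequence $A_n \in \A(P)$ of ${\bf K}$-YM connections, none of which is ${\bf K}$-ASD, with $\|F^+_{A_n, {\bf K}}\|^2_{L^2(Z)} \to 0$ and $\YM_{\bf K}(A_n) \leq \CS_{K, P}(a) + 1$. First I would extract a limit using the perturbed Uhlenbeck compactness package (Theorem \ref{UhlComp1}, Corollary \ref{UhlCor}, and Remark \ref{UhlRemark}): after passing to a subsequence and applying gauge transformations, the $A_n$ converge on the complement of a finite bubbling set to a broken ${\bf K}$-YM trajectory $(A^0; B^1, \ldots, B^J)$, with Yang-Mills bubbles $C^1, \ldots, C^M$ forming on bundles over $S^4$ under rescaling at the bubble points. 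Axiom \ref{axioms1} forces the perturbation term to rescale to zero in the bubble limit, so each $C^m$ is genuinely Yang-Mills in the unperturbed sense. Weak lower semicontinuity of the $L^2$-norm applied to the self-dual parts of the curvatures gives $\|F^+_{A^0, {\bf K}}\|^2 = \|F^+_{B^j, {\bf K}^Y}\|^2 = \|F^+_{C^m}\|^2 = 0$, so every piece of the limit is (perturbed or unperturbed) ASD.

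Next I would exclude bubbling and trajectory breaking. Each bubble $C^m$ is ASD on $S^4$, so twice its energy equals $\kappa$ of the underlying bundle and is therefore at least $\mathcal{I}_G / n_G$ by Lemma \ref{lemma1} and the definition of $\mathcal{I}_G$. The bubble energy balance
\[\lim_n \YM_{\bf K}(A_n) = \YM_{\bf K}(A^0) + \sum_{j=1}^J \YM_{{\bf K}^Y}(B^j) + \sum_{m=1}^M \YM(C^m),\]
together with $\YM_{\bf K}(A_n) \leq \CS_{K,P}(a) + 1$ and the choice $\eta(a) \leq 1/n_G$, leaves no room for any bubble. A parallel comparison of Chern-Simons values at successive asymptotic limits, using the action-index identity (\ref{actionindex}) and the index bound $\mathrm{Ind}_{K, P}(a) < \mathcal{I}_G$, rules out nontrivial trajectory breaks. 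Hence the $A_n$ converge in $W^{1,p}$ modulo gauge to a ${\bf K}$-ASD connection $A^0 \in \A^{1,2}(P; a)$.

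The final step is a rigidity claim: any ${\bf K}$-YM connection sufficiently $W^{1,p}$-close to an ASD-regular ${\bf K}$-ASD connection $A^0$ must itself be ${\bf K}$-ASD. I would prove this by applying the implicit function theorem in a Coulomb slice around $A^0$. Acyclicity of $a$ makes the linearization $d^+_{A^0,{\bf K}} \oplus d^*_{A^0, {\bf K}}$ Fredholm, ASD-regularity makes its self-dual part surjective, and the Bianchi identities of Corollary \ref{goodcor} convert the Yang-Mills equation $d^*_{A, {\bf K}} F^+_{A, {\bf K}} = 0$ into an elliptic system whose only sufficiently small solutions vanish. This contradicts the hypothesis that the $A_n$ are not ${\bf K}$-ASD. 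The main obstacle is executing this last rigidity step cleanly in the presence of a nonlocal perturbation on a non-compact manifold; the compactness and bubble-exclusion framework is essentially standard (cf.\ \cite{Sch,Schglobal,Waldron,Feehan1}). Case \ref{C} is identical with $(Z, {\bf K})$ replaced by $(\bb{R}\times Y, {\bf K}^Y)$, and case \ref{A} admits the classical alternative of the Bourguignon-Lawson gap theorem on $S^4$.
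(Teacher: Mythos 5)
Your overall strategy (contradiction, compactness, limit analysis, rigidity) is in the same spirit as the paper's proof, but there is a serious gap in your bubble-exclusion step that makes the argument circular and would not survive a careful inspection.

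You attempt to rule out bubbling via an energy balance together with the bound $\eta(a) \le 1/n_G$ and the index hypothesis $\mathrm{Ind}_{K,P}(a) < \mathcal{I}_G$. Neither of these is available at this stage. The index hypothesis is a hypothesis of Theorem \ref{heatflowtheorem}, not of Theorem \ref{epsdef1}; the latter is stated (and must hold) for an arbitrary acyclic $K$-flat $a$ satisfying the perturbation hypotheses of Section \ref{Long-TimeExistence}. More seriously, the constant $\eta(a)$ is \emph{defined} as the minimum of $1$, $1/n_G$, and the three quantities in \ref{A}, \ref{B}, \ref{C}, so appealing to $\eta(a)$ in a proof that those quantities are positive is circular. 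The energy bound $\YM_{\bf K}(A_n) \le \CS_{K,P}(a)+1$ built into the definitions of \ref{A}--\ref{C} gives a uniform energy bound needed for compactness, but on its own it leaves plenty of room for bubbles, and your Chern--Simons bookkeeping step for trajectory breaks depends on the index bound you don't have. Consequently, the paper \emph{does not} exclude bubbling; it handles it directly. In Case 2, the paper rescales at a bubble point, obtains an unperturbed finite-energy ASD connection $A_\infty$ on $S^4$ (the perturbation vanishes under rescaling by \ref{axioms1}), and shows that the normalized self-dual curvatures $W_n = F^+_{A_n',{\bf K}_n}/\Vert F^+_{A_n',{\bf K}_n} \Vert$ converge to a nonzero self-dual $W_\infty$ with $d_{A_\infty} W_\infty = 0$, contradicting the injectivity of $d_{A_\infty}$ on self-dual forms (the $S^4$ Weitzenb\"ock identity).

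Your rigidity step is also presented as an implicit-function-theorem argument, and as sketched it does not clearly prove what you need: the IFT controls nearby ASD solutions, but it does not immediately say that a nearby ${\bf K}$-YM connection must be ${\bf K}$-ASD. The paper's route is cleaner and avoids the IFT entirely. It combines two observations: (i) if $A$ is ${\bf K}$-YM but not ${\bf K}$-ASD, then $F^+_{A,{\bf K}}$ is a nonzero self-dual form in $\ker d^*_{A,{\bf K}}$ (orthogonality to $\mathrm{im}\:d^+_{A,{\bf K}}$), and (ii) any ASD-regular ${\bf K}$-ASD connection $A'$ admits a Poincar\'e inequality $\Vert W \Vert_{W^{1,2}} \le C_{A'} \Vert d_{A',{\bf K}} W \Vert_{L^2}$ with $C_{A'}$ uniform over bounded energy (Remark \ref{uniformC}). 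It then uses Floer's gluing theorem (Remark \ref{FloerGluing}) to produce a genuine ${\bf K}$-ASD sequence $A_n'$ $L^\infty$-close to $A_n$, feeds the normalized $W_n$ into the Poincar\'e inequality for $A_n'$, and derives $1 \le C \Vert A_n - A_n' \Vert_{L^\infty} \to 0$. This machinery does all the work that your rigidity claim was meant to do, and it extends uniformly to the bubble case on $S^4$. You should replace the index/energy bubble-exclusion and the IFT rigidity claim with this direct orthogonality-plus-Poincar\'e argument.
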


\begin{proof}
The proof we present follows a strategy suggested to us by Chris Woodward. It relies on two simple observations. 

\begin{adjustwidth}{15pt}{}
\begin{customthmQuant}{{Observation 1}}\label{Observation1}
 \emph{Suppose $\afA$ is ${\bf K}$-YM on $Z$. Then the self-dual part $F_{A. {\bf K}}^+$ is $L^2$-orthogonal to the image $\mathrm{im} \: d_{A, {\bf K}}^+ \subset \Omega^+(Z, P(\frak{g}))$. If, in addition, $A$ is \emph{not} ${\bf K}$-ASD, then this self-dual part is non-zero and so the cohomology group}
$$H^+_{A, {\bf K}} \defeq \Omega^+(Z, P(\frak{g})) / \mathrm{im} \: d_{A, {\bf K}}^+$$
\emph{is non-trivial.} 

The same observation holds when $Z$ is interpreted as $S^4$ or $\bb{R} \times Y$, and when ${\bf K}$ is interpreted as zero or ${\bf K}^Y$, respectively. (That is, the situations of \ref{A}, \ref{B}, and \ref{C} are all covered.)
\end{customthmQuant}

\medskip

\begin{customthmQuant}{{Observation 2}}\label{Observation2}
\emph{Suppose $A$ is ${\bf K}$-ASD on $Z$. Then there is some constant $C_A$ so that }
\begin{equation}\label{PI1}
\Vert W \Vert_{W^{1,2}(Z)} \leq C_A \Vert d_{A, {\bf K}}^* W \Vert_{L^2(Z)} = C_A \Vert d_{A, {\bf K}} W \Vert_{L^2(Z)}
\end{equation}
\emph{for all self-dual 2-forms $W \in \Omega^+(Z, P(\frak{g}))$.} 

To see (\ref{PI1}), we note that our assumptions on ${\bf K}$ imply that $A$ is ASD-regular. This means the operator
$$d_{A, {\bf K}}^+ : W^{1,2}(\Omega^1) \longrightarrow L^2(\Omega^+)$$ 
is surjective. Equivalently, this implies the formal adjoint 
$$d_{A, {\bf K}}^* : W^{1,2}(\Omega^+) \longrightarrow L^2(\Omega^1).$$ 
is injective. Then (\ref{PI1}) follows because the formal adjoint $d_{A, {\bf K}}^*$ has closed image (this uses the non-degeneracy of the $K$-flat connection to which $A$ is asymptotic; see \cite[Chapter 3]{Donfloer}). Note that (\ref{PI1}) implies 
$$H^+_{A, {\bf K}} = 0$$ 
for all ${\bf K}$-ASD connections $A$. The same holds with $Z$ and ${\bf K}$ interpreted as $\bb{R} \times Y$ and ${\bf K}^Y$, respectively, so this address the situations in \ref{B} and \ref{C}.

The analogous estimate (\ref{PI1}) for the situation in $\ref{A}$ is a bit different, since we are not working with a perturbation on $S^4$ and cannot use it to ensure the ASD-regular condition holds. Nevertheless, the condition does hold due to the symmetries of $S^4$. Indeed, suppose $A$ is an anti-self dual connection on a bundle over $S^4$. Then the Weitzenb\"{o}ck formula \cite[Equation (6.26)]{FU} reads
$$2 d_A^* d_A W = \nabla_A^* \nabla_A W + 4 W.$$
for self-dual 2-forms $W$, where $\nabla_A$ is the covariant derivative obtained by tensoring the Levi-Civita connection on $S^4$ with $A$. The relevance here is that $\nabla_A$ is injective, and so there is some constant $C$ such that
$$\Vert W \Vert_{L^2(S^4)} \leq C \Vert \nabla_A W \Vert_{L^2(S^4)}$$ 
for all self-dual $W$. Combining this with the Weitzenb\"{o}ck formula, we have
$$\Vert W \Vert_{L^2(S^4)}^2 \leq C^2 \left(2  \Vert d_A W \Vert^2_{L^2(S^4)} - 4 \Vert W \Vert^2_{L^2(S^4)} \right) \leq 2 C^2 \Vert d_A W \Vert^2_{L^2(S^4)} ,$$
which implies $d_A$ is injective. Since $d_A$ has closed image in the relevant Sobolev completions, it follows that 

$$\Vert W \Vert_{W^{1,2}(S^4)} \leq  C_A \Vert d_{A} W \Vert_{L^2(S^4)},$$
which is the $S^4$-version of (\ref{PI1}).
\end{customthmQuant}
\end{adjustwidth}

\begin{remark}\label{uniformC}
Given a fixed energy bound $E$, the constant $C_A$ from (\ref{PI1}) can be chosen to be independent of the ${\bf K}$-ASD connection $A$, provided $\YM_{\bf K} (A) \leq E$. This is because of the following. First, the constant $C_A$ depends only on the gauge equivalence class of $A$. Second, the moduli space of fixed-energy ${\bf K}$-ASD connections has a compactification in terms of broken trajectories and bubbles. Lastly, Floer's gluing theorem shows that if (\ref{PI1}) holds for each connection in a broken trajectory-with-bubbles, then it holds for any connection sufficiently Uhlenbeck-close to that broken trajectory-with-bubbles. 
\end{remark}

We will use the above observations to prove that the quantity in \ref{B} is positive; the quantities in \ref{A} and \ref{C} are essentially special cases, given the observations above. 

\medskip

We argue by contradiction. If the quantity in \ref{B} is zero, then we can find a sequence of $A_n$ of smooth ${\bf K}$-YM connections $A_n$ satisfying $\YM_{\bf K}(A_n) \leq \CS_{K, P}(a) + 1$ and $F_{A_n, {\bf K}}^+ \neq 0$, but with the property that the self-dual curvatures are converging to zero
\begin{equation}\label{ymenergyasdfasdas}
\Vert F_{A_n, {\bf K}}^+ \Vert_{L^2(Z)} \longrightarrow 0.
\end{equation}
Since each of these has finite energy, it follows that there are $K$-flat connections $a_n$ with $A_n \in \A^{1,2}(P; a_n)$. To simplify the discussion, we assume $a_n = a$ for all $n$. The general case reduces to this one by the gauge invariance of the problem, the \emph{uniform} energy bound on the $A_n$, and the fact that there are only finitely many gauge equivalence classes of $K$-flat connections. 

We split our analysis up into cases according to whether bubbles form.

\medskip

\noindent \emph{Case 1: $\sup_n \Vert F_{A_n} \Vert_{L^\infty(Z)} < \infty$ (no bubbles form)}

\medskip

In this case, we appeal to the Uhlenbeck Compactness Theorem \ref{UhlComp1}. Then by passing to a subsequence and composing with suitable gauge transformations, we may assume the $A_n$ converge in $\C^\infty$ to a broken ${\bf K}$-ASD trajectory $(A^0; B^1, \ldots, B^J)$ asymptotic to $a$ (this is ${\bf K}$-ASD by (\ref{ymenergyasdfasdas})). By Floer's gluing theorem \cite{Fl1} (see Remark \ref{FloerGluing}), we can find a second sequence $A_n' \in \A^{1,p}(P; a)$ of ${\bf K}$-ASD connections that converge to the same broken trajectory. Then it follows from Corollary \ref{UhlCor} and the triangle inequality, that 
\begin{equation}\label{linftgoingtoero}
\limd{n \rightarrow 0} \Vert A_n - A_n' \Vert_{L^\infty(Z)} = 0.
\end{equation}

By Observation 1, we have $H^+_{A_n} \neq 0$ for all $n$. This implies we can find self-dual forms $W_n \in \Omega^+(Z, P(\frak{g}))$ with
$$\Vert W_n \Vert_{L^2(Z)} = 1, \indent \mathrm{and} \indent d_{A_n} W_n = 0.$$
Translating to a statement about the nearby ${\bf K}$-ASD connection $A'_n$, the second equality above becomes
$$d_{\afA_n', {\bf K}} W_n = \left[ \afA_n'- \afA_n \wedge W_n \right].$$
By Observation 2, there is some constant $C_{A_n'}$ so that
$$1 \leq \Vert W_n \Vert_{W^{1,2}(Z)} \leq C_{A_n'} \Vert d_{\afA_n' , {\bf K}} W_n \Vert_{L^2(Z)} \leq  C_{A_n'}  \Vert \afA_n'  - A_n \Vert_{L^\infty(Z)}.$$
By Remark \ref{uniformC}, this constant $C_{A_n'} \leq C$ is uniformly bounded and so the right-hand side is going to zero by (\ref{linftgoingtoero}). This is a contradiction, and so we are finished with the analysis of Case 1.

  \medskip

\noindent \emph{Case 2: $\sup_n \Vert F_{A_n} \Vert_{L^\infty(Z)} = \infty$ (bubbles form)}

\medskip

Find points $z_n \in Z$ with
$$c_n^2 \defeq \vert F_{A_n(z_n)} \vert = \Vert F_{A_n} \Vert_{L^\infty(Z)}.$$
After possibly passing to a subsequence, we may assume $c_n \rightarrow \infty$. There are two further cases to consider.

\medskip

\noindent \emph{Subcase 1: There is some compact set $C \subset Z$ with $z_n \in C$ for all $n$.}

\medskip

\noindent \emph{Subcase 2: The $z_n$ are not contained in any compact set in $Z$.}

\medskip

Subcase 2 reduces to Subcase 1 as follows. By passing to a subsequence, we may assume $z_n \in \left[0, \infty \right) \times Y$ for all $n$. Relative to these coordinates, write $z_n = (s_n, y_n)$. By translating by $s_n$, we may view $z_n$ as the point in $\bb{R} \times Y$ with coordinates $(0, y_n)$. This recovers Subcase 1 with $Z = \bb{R} \times Y$ and $C = \left\{0 \right\} \times Y$. It will be clear from the proof of Subcase 1 below, that this translating process does not effect the argument.

We will therefore be done if we can show that Subcase 1 leads to a contradiction. We may pass to a subsequence and assume the $z_n$ converge to some $z_\infty \in Z$. Fix a contractible ball $B(z_\infty) \subset Z$ containing $z_\infty$. For simplicity, we assume the radius is 1. By restricting to $B(z_\infty)$, we may view each $A_n$ as a ${\bf K}$-YM connections on the unit ball $B_1(0) \subset \bb{R}^4$ in Euclidean space, relative to the metric $g$ pulled back from $Z$. Comparing these restrictions to the trivial connection $d$ on $\bb{R}^4$, we can write
$$A_n = d + \sumdd{i = 1}{4} B_{n, i}  \: dx_i$$
for some functions $B_{n, i}: B_1(0) \rightarrow \frak{g}$. For each $n$, define a new connection $A'_n$ by
$$A'_n(x) = d + c_n^{-1} \sumdd{i = 1}{4} B_{n, i}(c_n^{-1} x) \:  dx_i.$$
Then $A'_n$ is a ${\bf K}_n$-YM connection on $B_{c_n}(0) \subset \bb{R}^4$ relative to a metric $g_n$. Here the perturbation is 
$${\bf K}_n \defeq c_n^{-2} {\bf K}$$ 
which is converging uniformly to zero. The metric $g_n$ is given by conformally scaling $g$ about 0; in particular, the $g_n$ are converging to the flat Euclidean metric. 

By conformal invariance, the connection $A_n'$ has energy less than $\YM_{\bf K}(A_n)$, which we have assumed is uniformly bounded. We also have
$$\Vert F_{A_n', {\bf K}_n}^+ \Vert_{L^2(B_{c_n}(0))} \longrightarrow 0.$$
However, the $L^\infty$ norms normalize to give
$$\Vert F_{A_n'} \Vert_{L^\infty(B_{c_n}(0))} = \vert F_{A_n'(0)} \vert = 1.$$
Now we effectively repeat the argument from Case 1 on the non-compact manifold $\bb{R}^4$. Uhlenbeck's compactness theorem together with the $L^\infty$-bound implies we can pass to a subsequence and apply suitable gauge transformations so that the $A_n'$ converge in $\C^\infty$ on compact subsets of $\bb{R}^4$ to some limiting finite-energy ASD connection $A_\infty'$ on $\bb{R}^4$ (there is no perturbation in the defining equation, and the metric is the Euclidean one). We note that this convergence implies that there is some constant $C$ so that, for each compact set $B \subset \bb{R}^4$, we have
\begin{equation}\label{linfbound}
\Vert A_n' - A_\infty' \Vert_{L^\infty(B)} \leq C
\end{equation}
provided $n$ is large enough so $B_{c_n}(0) \subset B$. That this bound is independent of the compact set $B$ chosen follows because the $A_n$ are uniformly bounded on the boundary of the ball $B(z_\infty)$, and so the $A_n'\vert_{\partial B_{c_n}(0)}$ decay uniformly to the trivial connection as $n$ goes to infinity.

To obtain a contradiction, consider the self-dual 2-forms
$$W_n \defeq F_{A_n', {\bf K}_n}^+ / \Vert F_{A_n', {\bf K}_n}^+ \Vert_{L^2(B_{c_n}(0))}.$$
Since $A_n'$ is not ${\bf K}_n$-ASD, this is well-defined. However, since $A_n'$ is ${\bf K}_n$-YM, these satisfy
$$d_{A_n', {\bf K}_n} W_n = 0.$$
The uniform bound in (\ref{linfbound}) combines with the convergence of the ${\bf K}_n$ to zero to imply that the $W_n$ are uniformly bounded in $W^{1,2}$ on compact subsets of $\bb{R}^4$, and this bound is independent of the compact set chosen. Using a bump function, we can extend $W_n$ to a self-dual 2-form on $S^4$ so that
$$\sup_n \Vert W_n \Vert_{W^{1,2}(S^4)} < \infty.$$
 Then the $W_n$ converge strongly in $L^2$ on all of $S^4$ to some limiting self-dual form $W_\infty$. This is non-zero since
$$1 = \Vert W_n \Vert_{L^2(B_{c_n}(0))} \leq \Vert W_n \Vert_{L^2(S^4)}.$$
Finally, since $A_\infty$ is a finite-energy ASD connection on $\bb{R}^4$, it has a unique extension to a finite-energy ASD connection on $S^4$. We plainly have
$$d_{A_\infty} W_\infty = 0,$$
which contradicts Observation 2 since $W_\infty \neq 0$. 
\end{proof}

\subsection{Long-time existence}\label{Proof2a}

In this section and the next, we will repeatedly use fact that the $L^2(Z)$-norms of $F_{A, {\bf K}}$ and $F_{A, {\bf K}}^+$ are non-increasing along the flow. Indeed, the relation (\ref{ympertmotheruck}) and the flow (\ref{ymheatflow}) give
\begin{equation}\label{theflowgibes}
\frac{d}{d\tau} \Vert F_{A(\tau), {\bf K}}^+ \Vert_{L^2(Z)}^2 = \frac{d}{d\tau} \YM_{{\bf K}} (A(\tau)) = - \Vert d_{\afA, {\bf K}}^* F_{\afA, {\bf K}} \Vert_{L^2(Z)}^2.
\end{equation}

Now we turn to establishing the long-time existence assertions of Theorem \ref{heatflowtheorem}. By Theorem \ref{TheoremShort-TimeExistence}, there is some maximal time $\overline{\tau} \in \left(0, \infty \right]$ for which the flow starting at $A_0 \in \A^{1,2}(P; a)$ exists for all $\tau \in \left[0, \overline{\tau}  \right)$. Our goal in this section is to show that, under the hypotheses of Theorem \ref{heatflowtheorem}, we have $\overline{\tau}  = \infty$. 

If $\overline{\tau}_1 \defeq \overline{\tau}$ is finite, then it follows from Proposition \ref{ConvergenceForFiniteTimeBubbling} and Remark \ref{ExtensionToZ}, that there is some bundle $P_1 \rightarrow Z$, a gauge transformation $u_1$ on $Q$, and a connection 
$$A_1 \in \A^{1,2}(P_1, u_1^*a)$$ 
so that the $A(\tau)$ converge to a pullback of $A_1$, and 
$$\left\{ \textrm{energy of bubbles} \right\}  + \YM_{\bf{K}}(A_1) \leq \YM_{\bf{K}}(A_0).$$
In fact, we can say a little more: We have assumed that $\smash{\Vert F_{A_0, {\bf K}}^+ \Vert_{L^2(Z)}^2}$ is no greater than the constant $\eta(a)$. This $L^2$-norm is non-increasing along the flow, and is conformally invariant. In particular, it follows from \ref{A} in the definition of $\eta(a)$ that each Yang-Mills bubble is in fact ASD. Energy quantization for ASD connections on $S^4$ implies that each has energy at least ${\mathcal{I}}_G/ n_G$ (these constants were discussed before the statement of Theorem \ref{heatflowtheorem}). In particular, the assumption that at least one bubble forms implies

\begin{equation}\label{smoothconneciton}
{\mathcal{I}}_G/ n_G + \YM_{\bf{K}}(A_1)  \leq \YM_{{\bf K}}(A_0).
\end{equation}

Now we want to start the flow over again, but with initial condition $A_1$ in place of $A_0$. However, from what we have at this point, it may not be the case that $A_1$ has enough regularity to apply the Short-Time Existence Theorem \ref{TheoremShort-TimeExistence}. Nevertheless, for the argument that follows, it suffices to replace $A_1$ by any smooth connection $A_1'$ in $\A^{1,2}(P_1, u_1^*a)$, provided the energy of $A_1'$ is no greater than the energy of $A_1$. Such connections always exist. For example, take $A_1'$ to be any connection along Donaldson's flow (\ref{gaugeequivflow}) with initial condition $A_1$. This flow is smoothing, and the second Bianchi identity shows that it is energy non-increasing.

In summary, we may therefore assume that $A_1$ is a \emph{smooth} connection in $\A^{1,2}(P_1, u_1^*a)$ satisfying (\ref{smoothconneciton}). Now repeat the above procedure with $A_1$ in place of $A_0$. Continuing inductively, there are a number of times $\overline{\tau}_1, \ldots, \overline{\tau}_L$ at which bubbles can form. Associated to each $\overline{\tau}_\ell$ is a gauge transformation $u_\ell$, a bundle $P_\ell$, and a smooth connection $A_\ell \in \A^{1,2}(P_\ell, u_\ell^*a)$ satisfying
$$\ell {\mathcal{I}}_G/ n_G  + \YM_{\bf{K}}(A_k)   \leq \YM_{\bf{K}}(A_0).$$
This shows that there can be only finitely many such times $L \geq 1$ at which bubbles form. Then the flow starting at $A_L$ exists for all time. We will denote this flow by $A(\tau)$, with the understanding that it is valid for $\tau \geq \overline{\tau}_L$.

Now we wish to take the infinite-time limit. Towards this end, we make the following claim.

\begin{adjustwidth}{0pt}{}
\begin{customthmQuant}{Claim 1}\label{Claim1Long-Time} 
\emph{There is a gauge transformation $u_{L+1}$ on $Q$, and a broken ${\bf K}$-ASD trajectory $(A_{L+1}; B_1, \ldots, B_J )$ that is asymptotic to $u^*_{L+1} a$, and satisfies}
$$ \YM_{{\bf K}}(A_{L+1}) + \sumdd{j = 1}{J} \YM_{{\bf K}_Y} (B_j) \leq \YM_{\bf{K}}(A_L).$$
\end{customthmQuant}
\end{adjustwidth}

\medskip

(Note that the analysis from Proposition \ref{ConvergenceForFiniteTimeBubbling} is no longer valid in the infinite-time regime.) To prove the claim, we fix a sequence $\tau_n \rightarrow \infty$, and appeal to Uhlenbeck's weak compactness theorem applied to the sequence $A(\tau_n)$; see Remark \ref{UhlRemark} (a). This sequence converges weakly in $\smash{W^{1,p}_{loc}(Z \backslash B)}$, modulo gauge and on the complement of some finite bubbling set $B \subset Z$, to a connection 
$$A_{L + 1} \in \A^{1,p}_{loc}\left(P \vert_{Z \backslash B} \right).$$ 
By standard infinite-time analysis for flows, it follows that $A_{L+1}$ is ${\bf K}$-YM on the complement of the bubbling set $B$. It also satisfies the energy bound
$$\YM_{{\bf K}}(A_{L+1}) \leq \YM_{{\bf K}} (A_0) =  \CS_{K, P}(a) + \Vert F_{A_0}^+ \Vert^2_{L^2(Z)} < \CS_{K, P}(a) + \eta(a).$$
This implies two things. First, $A_{L+1}$ has finite energy, and so extends over $B$ by removal of singularities. Second, we have defined $\eta(a)$ so that $\eta(a) \leq 1$; hence $\YM_{{\bf K}}(A_{L+1})  < \CS_{K, P}(a) + 1$. The relevance of this latter estimate becomes clear when coupled with the bound
$$\Vert F^+_{A_{L+1}, {\bf K}} \Vert^2_{L^2(Z)} \leq \liminf_{\tau \rightarrow \infty} \Vert F^+_{A(\tau), {\bf K}} \Vert^2_{L^2(Z)}  \leq  \Vert F^+_{A_{0}, {\bf K}} \Vert^2_{L^2(Z)}  < \eta(a).$$
For then, it follows from \ref{B} in the definition of $\eta(a)$ that $A_{L+1}$ must actually be ${\bf K}$-ASD. 

Since $A_{L+1}$ has finite-energy, it is asymptotic to some $K$-flat connection $a_1$. If $a_1$ is gauge equivalent to $a$, then we are done with the proof of the claim. Otherwise, by performing suitable translations on the cylindrical end (see Section \ref{UhlenbeckCompactness}, and the references therein), and repeating the above argument, we can complete $A_{L+1}$ to a broken ${\bf K}$-YM trajectory $(A_{L+1}; B_1, \ldots, B_J )$, with $B_J$ asymptotic, modulo gauge, to $a$. That each $B_j$ is actually ${\bf K}_Y$-ASD follows from the same argument we used to show $A_{L+1}$ is ${\bf K}$-ASD; this time one should use \ref{C} in place of \ref{B}. This finishes the proof of \ref{Claim1Long-Time}.

\medskip

With \ref{Claim1Long-Time} in hand, we now have
\begin{equation}\label{combinewith}
\begin{array}{rcl}
{\mathcal{I}}_G / n_G + \YM_{\bf K}(A_{L+1}) + \sumdd{j = 1}{J} \YM_{{\bf K}^Y}(B_j) & \leq & \YM_{\bf K}(A_0) \\
& < &  \CS_{K, P}(a) + \eta(a) \\
&&\\
&\leq & \CS_{K, P}(a) +1/n_G.
\end{array}
\end{equation}
In the last line we used another defining condition on $\eta(a)$ from Section \ref{APositiveEnergyGap}. We will now focus on the energies appearing on the left. 

Since these connections are anti-self dual, these energies are topological. To compute these energies, let $a_1$ be the asymptotic limit of $A_{L+1}$, and $a_j$ the asymptotic limit of $B_j$ at $+\infty$; hence $a_{J} = u^*_{L+1} a$. Then since $A_{L+1}$ is ${\bf K}$-ASD, we have
$$\YM_{{\bf K}}(A_{L+1}) = \CS_{K, P}(a_0).$$
The version of this for $B_j$ is
$$\YM_{{\bf K}^Y} (B_j) = \CS_K(a_{j-1} , a_j),$$
where $\CS_{K}(b^-, b^+)$ is the perturbed Chern-Simons functional for $\bb{R} \times Q$ with asymptotics at $\pm \infty$ given by $b^\pm$. These Chern-Simons functionals are defined by integrals and so are additive in their arguments 
$$\CS_{K, P}(a) + \CS_K(a, b) = \CS_{K, P}(b), \indent \CS_K(a, b) + \CS_{K}(b, c) = \CS_K(a, c).$$
This gives
$$\begin{array}{rcl}
  \YM_{\bf K}(A_{L+1}) +  \sum_{j=1}^{J} \YM_{{\bf K}^Y}(B_j) & = &   \CS_{K, P}(a_0 ) \\
 && + \smash{ \sum_{j = 1}^{J} \CS_{K}(a_{j-1}, a_j)}\\
 &&\\
& = & \CS_{K, P}(u_{L+1}^*a)\\
&&\\
& =& \CS_{K, P}(a)  + \mathrm{Ind}_{K, P}(u_{L+1}^*a) / n_G\\
&& \indent  - \mathrm{Ind}_{K, P}(a)/n_G,
\end{array}$$
where we used (\ref{actionindex}) in the last line. We have assume $\mathrm{Ind}_{K, P}(a) < {\mathcal{I}}_G$. Since $\mathrm{Ind}_{K, P}(a)$ and $ {\mathcal{I}}_G$ are integers, their difference is at least 1, and so

$$\begin{array}{l}
 \YM_{\bf K}(A_{L+1}) +  \sumdd{j=1}{J} \YM_{{\bf K}^Y}(B_j) \\
 \indent \indent  \geq   \mathrm{Ind}_{K, P}(u_{L+1}^*a) / n_G  +  \CS_{K, P}(a) + 1 / n_G - {\mathcal{I}}_G / n_G .
 \end{array}$$
 Combining this with (\ref{combinewith}) gives 
\begin{equation}\label{comsasfa}
\mathrm{Ind}_{K, P}(u_{L+1}^*a) / n_G  +  \CS_{K, P}(a) + 1 / n_G < \CS_{K, P}(a) +1/n_G.
 \end{equation}
 Hence $\mathrm{Ind}_{K, P}(u_{L+1}^*a)  \leq -1$. Our desired contradiction will now follow from the next claim.

\begin{adjustwidth}{0pt}{}
\begin{customthmQuant}{Claim 2}\label{Claim2Long-Time} 
\emph{The integer $\mathrm{Ind}_{K, P}(u_{L+1}^*a)$ is non-negative.}
\end{customthmQuant}
\end{adjustwidth}

\medskip 

To see this, recall that the quantity $\mathrm{Ind}_{K, P}(u_{L+1}^*a) $ is the expected dimension of the moduli space ${\mathcal{M}}_{ASD}(u_{L+1}^*a; {\bf K})$ of ${\bf K}$-ASD connections that are asymptotic to $u_{L+1}^*a$. We have assumed that ${\bf K}$ is ASD-regular, which in particular means that all non-empty moduli space are smooth and of the expected dimension. It follows from Floer's gluing theorem applied to the broken trajectory 
$$\smash{(A_{L+1}; B_1, \ldots , B_J)}$$ 
that there is some ${\bf K}$-ASD connection in $\A^{1,p}(P; u_{L+1}^*a)$; see Remark \ref{FloerGluing}. Hence ${\mathcal{M}}_{ASD, {\bf K}}(u_{L+1}^*a)$ is non-empty, and must therefore have non-negative dimension. This proves the claim.

\medskip

This concludes our argument for long-time existence. Note that this same argument also excludes bubbling at infinite time. \qed

\subsection{Infinite-time convergence}\label{Infinite-TimeConvergence}

Here we complete the proof of Theorem \ref{heatflowtheorem} by showing that $A(\tau)$ converges, in the sense described, as $\tau \rightarrow \infty$. Of course, we may assume that $A(\tau)$ is not ${\bf K}$-YM for any $\tau$, since otherwise it would be constant in $\tau$ by uniqueness, and we would be done.

It follows from the analysis of the previous section that no bubbling can form along the flow. The argument also shows that the flow converges, modulo gauge, at infinite time to a \emph{broken} ${\bf K}$-ASD trajectory. In this section we refine this by showing the flow $A(\tau)$ converges at infinite time \emph{on all of $Z$} to an actual ${\bf K}$-ASD connection $A \in \A^{1,2}(P; a) \cap \A^{1,4}(P; a)$. We do this in several steps.

\begin{adjustwidth}{0pt}{}
\begin{customthmQuant}{Step 1}\label{Step1} 
$\sup_{\tau \geq 1} \Vert F_{A(\tau), {\bf K}} \Vert_{L^\infty(Z)} < \infty$.
\end{customthmQuant}
\end{adjustwidth}

\medskip

If such a uniform bound did not exist, then there would be a non-flat Yang-Mills bubble on $S^4$ (actually, it would have to be ASD since $\smash{F^+_{A(\tau)}}$ is $L^2$-small). However, as we saw in Section \ref{Proof2a}, the energy and index assumptions do not allow this to happen.

\begin{adjustwidth}{0pt}{}
\begin{customthmQuant}{Step 2}\label{Step2} 
\emph{Fix $s < \infty$. Then there is some $C_{PI} > 0$ so that the following Poincar\'{e} inequality holds for all $\tau \geq 0$:}
 \begin{equation}\label{pi2} 
\Vert F^+_{A(\tau), {\bf K}} \Vert_{L^s(Z)} \leq C_{PI} \Vert d_{\afA(\tau), {\bf K}} F^+_{A(\tau), {\bf K}} \Vert_{L^s(Z)} .
\end{equation}
\end{customthmQuant}
\end{adjustwidth}

\medskip

Fix $\tau$. Since $A(\tau)$ is not ${\bf K}$-YM, the existence of a constant $C = C(\tau)$ satisfying (\ref{pi2}) is obvious. It suffices to show that this constant does not diverge as $\tau$ approaches $\infty$. If this were the case, then we could find a sequence $\tau_n$ diverging to $\infty$ with
$$\Vert d_{\afA(\tau_n), {\bf K}} W_n \Vert_{L^s(Z)}  \rightarrow 0,$$
where we have set
$$W_n \defeq   F^+_{A(\tau_n), {\bf K}}  /  \Vert F^+_{A(\tau_n), {\bf K}} \Vert_{L^s(Z)}.$$
The proof now is very similar to that of Theorem \ref{epsdef1}. Namely, by Uhlenbeck's compactness theorem, it follows that there are gauge transformation $u_n$ so that the $u_n^* A(\tau_n)$ converge to a broken ${\bf K}$-YM trajectory 
$$(A^0; B^1, \ldots, B^J)$$
that is asymptotic to $u^*a$ for some gauge transformation $u$. We assume, for simplicity, that $u$ is the identity. Just as in Section \ref{Proof2a}, the index assumption on $a$ implies that this is actually a broken ${\bf K}$-ASD trajectory. In particular, by Floer's gluing theorem (see Remark \ref{FloerGluing}) we can find a sequence $A_n'$ of ${\bf K}$-ASD connections in $\A(P; a)$ that converge to this broken trajectory in the sense of Theorem \ref{UhlComp1}. Then Corollary \ref{UhlCor} implies 
$$\Vert  A(\tau_n) - A_n' \Vert_{L^\infty(Z)} \longrightarrow 0,$$
after possibly applying suitable gauge transformations to the $A_n'$. The assumptions on ${\bf K}$ imply that each of these $A_n'$ is ASD-regular, so by Remark \ref{uniformC}, there is a uniform constant $C$ so that
$$\begin{array}{rcl}
1 = \Vert W_n \Vert_{L^s(Z)} & \leq  & C \Vert d_{\afA_n', {\bf K}}  W_n \Vert_{L^s(Z)} \\
& \leq & C \Vert d_{\afA(\tau_n) , {\bf K}} W_n \Vert_{L^s(Z)} + C \Vert \afA(\tau_n) -  \afA_n' \Vert_{L^\infty(Z)}.
\end{array}$$
(Strictly speaking, Remark \ref{uniformC} is only stated for $s = 2$, but the estimate holds for all $s < \infty$ by standard Fredholm theory.) The right-hand side is going to zero, so this contradiction finishes the proof of \ref{Step2}. 

\begin{adjustwidth}{0pt}{}
\begin{customthmQuant}{Step 3}\label{Step3} 
\emph{The $A(\tau)$ converge exponentially in $L^2(Z)$ to some connection $A_\infty \in \A^{0,2}(P; a)$.}
\end{customthmQuant}
\end{adjustwidth}

\medskip

Use the flow (\ref{ymheatflow}), together with the estimate (\ref{pi2}) to get 

$$\begin{array}{rcccl}
\frac{d}{d\tau} \Vert F^+_{\afA(\tau), {\bf K}} \Vert^2_{L^2} & = & -2 ( d_{\afA, {\bf K}}^+ d_{\afA, {\bf K}}^* F_{\afA, {\bf K}}, F_{\afA, {\bf K}}^+) & = &   -4 ( d_{\afA, {\bf K}}^+ d_{\afA, {\bf K}}^* F_{\afA, {\bf K}}^+, F_{\afA, {\bf K}}^+)\\
&&\\
& = & -4 \Vert d_{\afA, {\bf K}}^* F_{\afA, {\bf K}}^+\Vert_{L^2}^2 & \leq & -4 C_{PI}^{-2} \Vert F_{\afA, {\bf K}}^+ \Vert_{L^2}^2.
\end{array}$$
This implies exponential convergence of $F^+_{\afA, {\bf K}}$ to zero: 

\begin{equation}\label{expconve}
\Vert F^+_{\afA(\tau), {\bf K}} \Vert_{L^2(Z)}^2 \leq B^2 e^{-4\tau /C_{PI}^2},
\end{equation}
where we have set
$$B \defeq \Vert F^+_{\afA_0, {\bf K}} \Vert_{L^2}.$$

Next, integrate (\ref{theflowgibes}) over an interval $\left[\tau_a, \tau_b \right]$ to get 
$$\intdd{\tau_a}{\tau_b} \:  \Vert d_{\afA, {\bf K}}^* F_{\afA, {\bf K}} \Vert_{L^2}^2 = \Vert F_{\afA(\tau_a), {\bf K}}^+ \Vert^2_{L^2} - \Vert F_{\afA(\tau_b), {\bf K}}^+ \Vert^2_{L^2} \leq   B^2 e^{-4\tau_a /C_{PI}^2}.$$
Combining this with (\ref{beforeclaim}) gives 
$$\Vert A(\tau_b) - A(\tau_a) \Vert_{L^2(Z)} \leq \vert \tau_b - \tau_a \vert  B e^{-2\tau_a /C_{PI}^2},$$
which holds for all $0\leq \tau_a \leq \tau_b$. Fix $\tau \geq 0$ and use the above repeatedly with $\tau_a = \tau+j$ and $\tau_b = \tau+j+1$, for $0\leq j \leq J-1$, to get

$$\begin{array}{rcl}
\Vert \afA(\tau + J) - \afA(\tau ) \Vert_{L^2(Z)}  & \leq & \sumdd{j = 0}{J-1} \Vert \afA(\tau +j +1) - \afA(\tau + j) \Vert_{L^2(Z)} \\
& \leq & B e^{-2 \tau / C_{PI}^2} \sumdd{j=0}{J - 1} e^{-2 j/ C_{PI}^2}\\
& \leq  & \frac{B }{1 - e^{-2/C_{PI}^2}} e^{-2 \tau / C_{PI}^2}.
\end{array}$$
This type of argument shows that $\left\{\afA(\tau)\right\}_{\tau}$ is Cauchy and so converges in $L^2$ to some limiting connection $\afA_\infty$ with the same $K$-flat limits as the $\afA(\tau)$. This argument also shows exponential convergence in $L^2$:

\begin{equation}\label{ratherpreciseconvergenceestimate}
\Vert \afA(\tau) - \afA_\infty \Vert_{L^2(Z), g}   \leq \frac{\Vert F^+_{\afA_0, {\bf K}} \Vert_{L^2(Z), g} }{1 - e^{-2/C_{PI}^2}}  e^{-2 \tau / C_{PI}^2}, \indent \tau \geq 0.
\end{equation}
This completes the proof of \ref{Step3}.
%Argue as follows: We have
%
%$$ \intdd{i}{i+1} \Vert d_{\afA, H}^* F_{\afA, H} \Vert^2_{L^2} = \Vert F_{\afA(i), H} \Vert^2_{L^2} - \Vert F_{\afA(i+1), H} \Vert^2_{L^2} = \Vert F_{\afA(i), H}^+ \Vert^2_{L^2} - \Vert F_{\afA(i+1), H}^+ \Vert^2_{L^2} .$$
%Recall that $\Vert F_{\afA(i), H}^+ \Vert^2_{L^2} $ is decreasing in $i$, and so this sequence of numbers converges to its liminf. Consequently, the right-hand side is going to zero. On the other hand, find $t_i \in \left[ i, i+1 \right]$ so that $\Vert d_{\afA(t_i), H}^* F_{\afA(t_i), H}  \Vert_{L^2}^2$ equals the integral on the left. Now use $\Vert d_{\afA(t_i), H}^* F_{\afA(t_i), H}  \Vert_{L^2}^2 = 2 \Vert d_{\afA(t_i), H}^* F_{\afA(t_i), H}^+  \Vert_{L^2}^2 \geq C_P \Vert  F_{\afA(t_i), H}  \Vert_{L^2}^2$. This shows that $\Vert  F_{\afA(t_i), H}^+  \Vert_{L^2}^2$ is going to zero. Lastly, apply Uhlenbeck compactness to the $\afA(t_i)$ to get weak $W^{1,p}$ convergence on compact sets and combine this with the previous observation to get that the limit is perturbed ASD. This is basically the same argument one should use in the long-time existence above. 

\medskip

Our goal now is to bootstrap from $L^2$-convergence to the higher Sobolev convergence claimed in the statement of Theorem \ref{heatflowtheorem}. This will call for higher order versions of the estimates above. To simplify the discussion, for the rest of the proof we treat the perturbation ${\bf K}$ as being zero, and drop it from the notation. \ref{axioms1} provides the estimates necessary to extend these arguments to the case where ${\bf K}$ is non-zero.

\begin{adjustwidth}{0pt}{}
\begin{customthmQuant}{Step 4}\label{Step4} 
\emph{For each $2 \leq s \leq 4$, there is some $C$ so that the following holds for all $\tau \geq 1$:}
 $$\Vert d_{A(\tau)} F^+_{A(\tau)} \Vert_{L^s(Z)} \leq C\Vert d_{\afA(\tau)}^+ d_{\afA(\tau)} F^+_{A(\tau)} \Vert_{L^2(Z)} .$$
\end{customthmQuant}
\end{adjustwidth}

\medskip

We will prove the statement for $s = 4$. The proof for $s = 2$ is similar, and the result for general $2 \leq s \leq 4$ follows by interpolation. 

Our strategy is to use a contradiction argument similar to the one in \ref{Step2}. That is, we assume there is a sequence $\tau_n$ diverging to $\infty$ with
$$\Vert d_{A(\tau_n)}^+ d_{A(\tau_n)}^* W_n \Vert_{L^2(Z)} \longrightarrow 0,$$
where we have set
$$W_n \defeq  F_{A(\tau_n)}^+ / \Vert d_{A(\tau_n)} F^+_{A(\tau_n)} \Vert_{L^4(Z)}.$$
As in \ref{Step2}, it follows from Uhlenbeck's compactness theorem and Theorem \ref{UhlCor} that there are ASD connections $A_n' \in \A(P; a)$ so that 
$$\limd{n \rightarrow \infty} \Vert A(\tau_n) -  A_n' \Vert_{L^\infty(Z)} + \Vert  A(\tau_n) - A_n' \Vert_{W^{1,4}(Z)}  = 0.$$
Then we have
$$d_{A(\tau_n)} W_n = d_{A_n'} W_n + \left[ A(\tau_n) - A_n' \wedge  W_n \right].$$
Thus
\begin{equation}\label{155qqrwe}
\begin{array}{rcl}
1 = \Vert d_{A(\tau_n)} W_n  \Vert_{L^4(Z)} & \leq & \Vert d_{A_n'} W_n  \Vert_{L^4(Z)} + \Vert A(\tau_n) - A_n' \Vert_{L^\infty(Z)} \Vert W_n \Vert_{L^4(Z)}\\
&&\\
& \leq & \Vert d_{A_n'}  W_n  \Vert_{L^4(Z)} + C_{PI} \Vert A(\tau_n) - A_n' \Vert_{L^\infty(Z)},
\end{array}
\end{equation}
where $C_{PI}$ is the constant from \ref{Step2} with $s = 4$. 

Integration by parts shows that $d_{A}^+ $ is always injective on the image of $*d_{A}\vert_{\Omega^+}$. Indeed, suppose $W$ is a self-dual 2-form with $d_A^+*  d_AW = 0$. Then 
$$0 = (d_A^+ * d_A W, W) = (d_A * d_A W, W) = (* d_A W, d_A^* W) = - \Vert d_A W \Vert^2_{L^2(Z)}.$$

In particular, by the Fredholm property for $d_{A_n}$, there is some constant $C$ so that
$$\Vert d_{A_n'} W  \Vert_{L^4(Z)}  \leq C \Vert d_{A_n'}^+ * d_{A_n'} W  \Vert_{L^2(Z)}$$
for all self-dual 2-forms $W$. Moreover, the constant $C$ can be taken to be independent of $A_n'$, since the $A_n'$ are ASD with uniformly bounded energy; see Remark \ref{uniformC}. Combining this with (\ref{155qqrwe}) gives
$$\begin{array}{rcl}
1 & \leq & C \Vert d_{A_n'}^+ * d_{A_n'} W_n  \Vert_{L^2(Z)} + C_{PI} \Vert A(\tau_n) - A_n' \Vert_{L^\infty(Z)}\\
&& \\
& \leq & C \Vert d_{A(\tau_n)}^+ * d_{A(\tau_n)} W_n \Vert_{L^2(Z)} + C_{PI} \Vert A(\tau_n) - A_n' \Vert_{L^\infty(Z)}\\
&& + 2C \Vert A_n' - A(\tau_n) \Vert_{L^4(Z)} \left( \Vert W_n \Vert_{L^4(Z)} + \Vert d_{A_n'} W_n \Vert_{L^4(Z)} \right)\\
&& + 2C \Vert A_n' - A(\tau_n) \Vert_{W^{1,4}(Z)} \Vert W_n \Vert_{L^4(Z)}.
\end{array}$$
In the second line we used
$$d_{A'} * d_{A'} W = d_{A} * d_{A} W  - d_{A'} \left( * \left[ A'- A \wedge  W \right] \right) + \left[ A' - A \wedge d_{A} W \right] ,$$
together with H\"{o}lder estimates. The $L^4$-norms of $W_n $ and $d_{A_n'} W_n $ are uniformly bounded, so we can continue the above as
$$1 \leq C' \left( \Vert d_{A(\tau_n)}^+ * d_{A(\tau_n)} W_n \Vert_{L^2(Z)} + \Vert A(\tau_n) - A_n' \Vert_{L^\infty(Z)} +  \Vert A_n' - A(\tau_n)  \Vert_{W^{1,4}(Z)} \right).$$
The right-hand side is going to zero. This contradiction establishes \ref{Step4}.

\begin{adjustwidth}{0pt}{}
\begin{customthmQuant}{Step 5}\label{Step5} 
\emph{The connections $A(\tau)$ converge exponentially in $L^4(Z)$ to $A_\infty$.}
\end{customthmQuant}
\end{adjustwidth}

\medskip

We will show that $A(\tau)$ is Cauchy in $\A^{0,4}(P; a)$ as $\tau$ approaches $\infty$. Ultimately we will establish a second-order version of \ref{Step3} (the full second order version is \ref{Step6}, below). All unspecified norms are $L^2(Z)$. 

We begin by establishing some preliminary estimates. Differentiating and using the flow gives
$$\begin{array}{rcl}
\fracd{d}{d\tau} \frac{1}{2} \Vert d_A F^+_A \Vert^2 & = & - (d_A d_A^+ d_A^* F_A, d_A F^+_A )\\
&& - \left( \left[ d_A^* F_A \wedge F_A^+ \right], d_A F_A^+ \right).
\end{array}$$
Integrate by parts in the first term on the right, and use $d^*_A F_A = -2 * d_A F^+_A$ to get
$$  - (d_A d_A^+ d_A^* F_A, d_A F^+_A ) = - (d_A^+ * d_A F_A^+, * d_A * d_A F_A^+).$$
The quantity $d_A^+ * d_A F_A^+$ is the self-dual part of $*d_A * d_A F_A^+$, and the splitting $ \Omega^2 = \Omega^+ \oplus \Omega^-$ is an $L^2$-orthogonal decomposition. This allows us to write
$$  - (d_A d_A^+ d_A^* F_A, d_A F^+_A )  = - 2 \Vert d^+_A * d_A F_A^+ \Vert^2.$$
Putting these together gives
\begin{equation}\label{ASDHigherDer}
\begin{array}{rcl}
\fracd{d}{d\tau} \frac{1}{2} \Vert d_A F^+_A \Vert^2 & = & - 2 \Vert d^+_A * d_A F_A^+ \Vert^2  +2  \left( \left[ * d_A F^+_A \wedge F_A^+ \right], d_A F_A^+ \right).
\end{array}
\end{equation}
Use H\"{o}lder's inequality on the cubic term to get
$$\begin{array}{rcl}
\fracd{d}{d\tau} \frac{1}{2} \Vert d_A F^+_A \Vert^2 & \leq &-  \Vert d^+_A * d_A F_A^+ \Vert^2 -  \Vert d^+_A * d_A F_A^+ \Vert^2 + 4 \Vert F_A^+ \Vert \Vert d_A F^+_A \Vert^2_{L^4(Z)}\\
&&\\
& \leq & - c \Vert d_A F_A^+ \Vert^2 - c \Vert d_A F_A^+ \Vert_{L^4(Z)}^2 + 4 \Vert F_A^+ \Vert \Vert d_A F^+_A \Vert_{L^4(Z)}^2.
\end{array}$$
We used \ref{Step4} in the second line with $s = 2$, and again with $s = 4$. We know that $ \Vert F_A^+ \Vert $ is converging to zero, so the sum of the last two terms above becomes negative. Hence 
$$\fracd{d}{d\tau} \frac{1}{2} \Vert d_A F^+_A \Vert^2  \leq - c \Vert d_A F_A^+ \Vert^2 $$
for all sufficiently large $\tau$. This implies exponential decay
$$\Vert d_{A(\tau)} F^+_{A(\tau)} \Vert^2  \leq C e^{-2c \tau }.$$

Next, integrate (\ref{ASDHigherDer}) over an interval $\left[ \tau_a, \tau_b \right]$, with $\tau_a$ large, to get
$$\begin{array}{rcl}
\intdd{\tau_a}{\tau_b} \Vert d_A^+ * d_A F_A^+ \Vert^2 & = & \frac{1}{2} \Vert d_{A(\tau_a)} F_{A(\tau_a)}^+ \Vert^2 - \frac{1}{2} \Vert d_{A(\tau_b)} F_{A(\tau_b)}^+ \Vert^2\\
&& -\intdd{\tau_a}{\tau_b} \left( \left[ d_A^* F_A \wedge F_A^+ \right], d_A F_A^+ \right)\\
&&\\
& \leq & \frac{1}{2} \Vert d_{A(\tau_a)} F_{A(\tau_a)}^+ \Vert^2 \\
&&+  2\left( \sup_{\tau} \Vert F_{A(\tau)}^+ \Vert \right) \intdd{\tau_a}{\tau_b} \Vert d_A F_A^+ \Vert_{L^4(Z)}^2\\
&&\\
& \leq & \frac{1}{2} \Vert d_{A(\tau_a)} F_{A(\tau_a)}^+ \Vert^2 +  \frac{1}{2}  \intdd{\tau_a}{\tau_b} \Vert d^+ * d_A F_A^+ \Vert^2.
\end{array}$$
The last inequality used \ref{Step4} to estimate the $L^4$-norm; we also used that $\Vert F_{A(\tau)}^+ \Vert $ converges to zero, and so this holds provided $\tau_a$ is sufficiently large. We therefore have
$$\intdd{\tau_a}{\tau_b} \Vert d_A^+ * d_A F_A^+ \Vert^2 \leq \Vert d_{A(\tau_a)} F_{A(\tau_a)}^+ \Vert^2 \leq  C e^{-2c \tau_a}.$$

Now we verify \ref{Step5}. Take the $L^4$-norm of (\ref{ab}) to get
$$\begin{array}{rcl}
\Vert A(\tau_b) - A(\tau_a) \Vert_{L^4(Z)}^2 & \leq & \vert \tau_b - \tau_a \vert  \intdd{\tau_a}{\tau_b} \: \Vert d_{A}^* F_{A} \Vert^2_{L^4(Z)} \: d\tau.
\end{array}$$
By \ref{Step4} and the above exponential decay, we can continue this as follows
$$\begin{array}{rcl}
\Vert A(\tau_b) - A(\tau_a) \Vert_{L^4(Z)}^2 & \leq & C' \vert \tau_b - \tau_a \vert  \intdd{\tau_a}{\tau_b} \: \Vert d^+_A * d_{A} F^+_{A} \Vert^2 \: d\tau\\
&&\\
& \leq & C'' \vert \tau_b - \tau_a \vert e^{-2c \tau_a},
\end{array}$$
provided $\tau_a$ is sufficiently large. As in \ref{Step3}, this implies that $A(\tau)$ is Cauchy in $L^4(Z)$, and converges exponentially in $L^4(Z)$; the limit is necessarily $A_\infty$.

\begin{adjustwidth}{0pt}{}
\begin{customthmQuant}{Step 6}\label{Step6} 
\emph{The connections $A(\tau)$ converge exponentially in $W^{1,2}(Z)$ to $A_\infty$.}
\end{customthmQuant}
\end{adjustwidth}

\medskip

All unspecified norms are $L^2(Z)$. Recall the Sobolev norms are defined relative to the fixed reference connection $A_{ref}$. Therefore
$$\begin{array}{rcl}
\Vert A - A_{\infty} \Vert_{W^{1,2}(Z)} & \leq  & C \left( \Vert d^+_{A_{ref}} (A - A_\infty) \Vert  + \Vert d^*_{A_{ref}} (A - A_\infty)  \Vert + \Vert A -A_\infty \Vert \right)\\
&&\\
& \leq & C' \Big( \Vert d^+_{A_\infty} (A - A_\infty) \Vert  + \Vert d^*_{A_\infty} (A - A_\infty)  \Vert + \Vert A -A_\infty \Vert \Big. \\
&& \Big. +  \Vert A_{\infty} - A_{ref} \Vert_{L^4(Z)} \Vert A - A_\infty \Vert_{L^4(Z)}\Big)
\end{array}$$
We know that $A(\tau)$ converges to $A_\infty$ in $L^2(Z) \cap L^4(Z)$, so to establish \ref{Step6}, it suffices to show
$$\limd{\tau \rightarrow \infty} \Vert d^+_{A_\infty} (A(\tau) - A_\infty) \Vert  + \Vert d^*_{A_\infty} (A(\tau) - A_\infty)  \Vert = 0.$$
We will work with the first limit; the other is similar (use the second Bianchi identity). Apply $d^+_{A_\infty}$ to both sides of (\ref{ab}) to get
\begin{equation}\label{didifindit}
\begin{array}{rcl}
d^+_{A_\infty}(A(\tau) - A_\infty) & = & - d^+_{A_\infty}  \intdd{\tau}{\infty} d_A^* F_A \: d\tau \\
&&\\
& = & -  \intdd{\tau}{\infty} d^+_{A_\infty}  d_A^* F_A \: d\tau.
\end{array}
\end{equation}

\begin{remark}\label{morepre}
The justification for the exchange of the derivative and the integral is as follows: The proof of Theorem \ref{Short-TimeExistence} shows that there is a smooth path $B$ of connections, and a path of gauge transformation of class  
$$u \in \C^1((0, \infty), \G^{1,p}(P; e) \cap \G^{1,2}(P; e))$$
so that $u^*A = B$. Here $ p > 4$ is as in the statement of Theorem \ref{heatflowtheorem}. Then
$$d^+_{A_\infty}  d_A^* F_A = \mathrm{Ad}(u) d^+_{(u^{-1})^* A_{\infty}} d_A^* F_A,$$
which is uniformly bounded in $L^2(Z)$.
\end{remark}
 
Using $d_A^* F_A = - 2*d_A F_A^+$, and converting the $d_{A_\infty}$ into $d_A$, the equalities in (\ref{didifindit}) become
$$d^+_{A_\infty}(A(\tau) - A_\infty) =  2 \intdd{\tau}{\infty} d_A^+ * d_A F^+_A + 2 \left[ A_\infty - A \wedge  d_A* F^+_A  \right] \: d\tau$$
Next, take the $L^2$-norm of both sides and use H\"{o}lder's inequality
$$\begin{array}{rcl}
\Vert d_{A_\infty}^+(A(\tau) - A_\infty)  \Vert  & \leq & 2\intdd{\tau}{\infty} \Vert d^+_A * d_A F^+_A \Vert \\
&& \indent + \Vert A_\infty - A \Vert_{L^4(Z)} \Vert d_A  F^+_A \Vert_{L^4(Z)} \: d\tau\\
&&\\
& \leq & C \intdd{\tau}{\infty} \Vert d_A^+ * d_A F^+_A \Vert+ \Vert d_A F^+_A \Vert \: d\tau.
\end{array}$$
Here we used \ref{Step4} to convert away from the $L^4$-norm on $d_A F^+_A$, and we used \ref{Step5}, which gives a uniform bound on $\Vert A_\infty - A(\tau) \Vert_{L^4(Z)}$. Next, we have
$$\begin{array}{rcl}
\Vert d^+_{A_\infty}(A(\tau) - A_\infty)  \Vert  & \leq & C \sumdd{j = 0}{\infty} \intdd{\tau + j }{j + 1} \Vert d^+_A * d_A F^+_A \Vert_{L^2(Z)} + \Vert d_A F^+_A \Vert_{L^2(Z)} \: d\tau\\
&&\\
& \leq & C \sumdd{j = 0}{\infty} \left( \intdd{\tau + j }{ \tau + j + 1} \Vert d_A^+ * d_A F^+_A \Vert_{L^2(Z)}^2  +\Vert d_A F^+_A \Vert_{L^2(Z)}^2 \: d\tau \right)^{1/2}\\
&&\\
& \leq & C' \sumdd{j = 0}{\infty} e^{-c (\tau+j)/2}.
\end{array}$$
The last inequality follows by the exponential convergence that was established in \ref{Step5}.

\begin{adjustwidth}{0pt}{}
\begin{customthmQuant}{Step 7}\label{Step7} 
\emph{For $1 \leq s < \infty$, the $A(\tau)$ converge exponentially in $L^s(Z)$ to $A_\infty$.}
\end{customthmQuant}
\end{adjustwidth}

\medskip

Since $1 \leq s < \infty$, we have $2 \leq 4s/(s+1) < 4$, and so
$$W^{1,2}(Z) \cap W^{1,4}(Z) \subset W^{1,4s/(s+1)} (Z) \subset L^s(Z)$$
These embeddings involve constants that are independent of the connection used to define the Sobolev norms, and are valid even though $Z$ is non-compact. As we saw earlier, $d_A^+$ is injective on the image of $* d_A \vert_{\Omega^+}$. Together with the embeddings above, it follows that there is a bound of the form
$$\Vert d_A W \Vert_{L^s(Z)} \leq C \left( \Vert d_A^+ * d_A W \Vert_{L^2(Z)} + \Vert d_A^+ * d_A W \Vert_{L^4(Z)}  \right)$$
for all self-dual 2-forms $W$, and all connections $A = A(\tau)$ along the flow. We also have that $d_A$ is injective on the image of $d_A^+$, and so we can convert away from the $L^4$-norm to get
$$\Vert d_A W \Vert_{L^s(Z)} \leq C' \left( \Vert d_A^+ * d_A W \Vert_{L^2(Z)} + \Vert d_A d_A^+ * d_A W \Vert_{L^2(Z)}  \right)$$
for a constant that is independent of $W$ and $\tau$. 

Now turning to the problem at hand, take the norm of (\ref{ab}), and use the above $L^s$-estimates to get
$$\begin{array}{rcl}
\Vert A(\tau) - A_\infty \Vert_{L^s(Z)} & \leq & 2 \intdd{\tau}{\infty } \Vert d_A F^+_A \Vert_{L^s(Z)} \: d\tau \\
&&\\
& \leq & C' \intdd{\tau}{\infty}   \Vert d_A^+ * d_A F_A^+ \Vert_{L^2(Z)} +  \Vert d_A d_A^+ * d_A F_A^+ \Vert_{L^2(Z)} \: d \tau.
\end{array}$$
Now we want to estimate the integrand. Upon differentiating, one finds
$$\begin{array}{rcl}
\frac{d}{d\tau} \frac{1}{2} \Vert d_A^+ * d_A F_A^+ \Vert_{L^2(Z)}^2 & = & - 2 \Vert d_A d_A^+ * d_A F_A^+ \Vert^2_{L^2(Z)}\\
&& + 2 \left( \left[ * d_A F_A^+ \wedge * d_A F_A^+ \right], d_A^+ * d_A F_A^+ \right)\\
&& + 2 \left( d_A * \left[ * d_A F_A^+ \wedge F_A^+ \right], d_A^+ * d_A F_A^+ \right).
\end{array}$$
The terms that are cubic in $F_A^+$ can be estimated as we did in \ref{Step5}. Just as in that step, this gives exponential decay for $\Vert d_A^+ * d_A F_A^+ \Vert_{L^2(Z)}^2 $. Then it also gives exponential decay for 
$$\intdd{\tau}{\infty} \Vert d_A d_A^+ * d_A F_A^+ \Vert_{L^2(Z)} \: d \tau,$$
which finishes the argument.

\begin{adjustwidth}{0pt}{}
\begin{customthmQuant}{Step 8}\label{Step8} 
\emph{For $2 \leq q \leq 4$, the $A(\tau)$ converge exponentially in $W^{1,q}(Z)$ to $A_\infty$.}
\end{customthmQuant}
\end{adjustwidth}

\medskip

We will establish this for $q = 4$; the remaining values of $q$ follow by interpolation and \ref{Step6}.

The analysis is very similar to \ref{Step6}, so we will be brief. Begin by writing
$$\begin{array}{rcl}
\Vert A - A_\infty \Vert_{W^{1,4}(Z)} & \leq & C \Big( \Vert d_{A_\infty}^+ (A- A_\infty) \Vert_{L^4(Z)} + \Vert d_{A_\infty}^* (A- A_\infty) \Vert_{L^4(Z)} \Big. \\
&&  \Big. +  \Vert A - A_\infty \Vert_{L^4(Z)} + \Vert A - A_{ref} \Vert_{L^8(Z)}  \Vert A - A_\infty \Vert_{L^8(Z)} \Big).
\end{array}$$
By \ref{Step7}, it suffices to show the first two terms on the right go to zero. We will focus on the first of these. Take the $L^4$-norm of both sides of (\ref{didifindit}) to get
$$\begin{array}{rcl}
\Vert d^+_{A_\infty} (A(\tau) - A_\infty) \Vert_{L^4(Z)} & \leq &  2 \intdd{\tau}{\infty} \Vert d_A^+ * d_A F_A^+ \Vert_{L^4(Z)} \\
&& \indent \indent \Vert A - A_\infty \Vert_{L^8(Z)} \Vert d_A F_A^+ \Vert_{L^8(Z)}\\
&&\\
& \leq &  C \intdd{\tau}{\infty} \Vert d_A^+ * d_A F_A^+ \Vert_{L^2(Z)} + \Vert d_A d_A^+ * d_A F_A^+ \Vert_{L^2(Z)} \\
&& \indent \indent \Vert A - A_\infty \Vert_{L^8(Z)} \Vert d_A F_A^+ \Vert_{L^8(Z)},
\end{array}$$
where we used Lemma \ref{l2} to estimate the $L^4$-norm. As in \ref{Step7}, the right-hand side goes to zero exponentially. \qed

%This bootstrapping can likely be continued to get a little higher regularity (e.g., since the connections $A(\tau)$ have higher regularity), but I don't think it can be continued indefinitely. For example, the next iteration would require that $d_{A_{\infty}}^* d_{A_\infty} d_A^* F_A$ is $L^2$ uniformly in time. This would mean showing that $d_{A_\infty}^* \left[ A_\infty - A \wedge d_A^* F_A \right]$ is $L^2$, and hence showing that $d_{(u^{-1})^* A_\infty}^* \left[ (u^{-1})^* A_\infty - B \wedge d_B^* F_B \right]$ is $L^2$. This is the issue, since $u$ is $W^{1,p}$, so $(u^{-1})^*A_\infty$ is only $L^p$, and so its derivative would be $W^{-1,p}$. (That being said, I think that $u$ is actually $W^{3,2}$, since $B$ and $A$ are $W^{2,2}$; nevertheless, this same issue would arise further down the line, showing that this strategy cannot be continued indefinitely (which it shouldn't be able to be).)

\end{document}